\numberwithin{equation}{section}
\newtheorem{theorem}{Theorem}[section]
\newtheorem{lemma}[theorem]{Lemma}
\newtheorem{proposition}[theorem]{Proposition}
\theoremstyle{definition}  
\newtheorem{definition}[theorem]{Definition}
\newtheorem{remark}[theorem]{Remark}
\newcommand{\mc}{\mathcal}
\newcommand{\mb}{\mathbb}
\newcommand{\la}{\lambda}
\newcommand{\norm}[1]{\left\lVert#1\right\rVert}
\newcommand{\pd}[2]{\frac{\partial#1}{\partial#2}}
\newcommand{\R}{\mb{R}}
\newcommand{\N}{\mb{N}}
\newcommand{\e}{\varepsilon}
\newcommand{\eps}{\e}
\newcommand{\capa}[2]{\textnormal{\rm{Cap}}_{#1}(#2)}
\DeclareMathOperator{\supp}{supp}
\DeclareMathOperator{\dist}{dist}
\newcommand{\shp}{\Omega}
\newcommand{\bx}{D}
\title{Miminization of the first eigenvalue of
the Dirichlet Laplacian with a small volume obstacle}
\author{Benedetta Noris, Giovanni Siclari and Gianmaria Verzini}
\date{\today}
\begin{document}
\maketitle

\begin{abstract}
We consider the well-known shape optimization problem with spectral cost: minimizing the first 
eigenvalue of the Dirichlet Laplacian among all subdomains $\Omega$ having prescribed volume and  contained in a fixed box 
$D$; equivalently, we look for the best way to remove a compact set (obstacle) $K\subset\overline{D}$ of Lebesgue measure $|K|=\eps$,  
$0<\varepsilon<|D|$, in order to minimize the first Dirichlet eigenvalue of the set $\Omega = 
D \setminus K$. 

In the small volume regime $\eps\to0$, we prove that the optimal obstacles accumulate, in a suitable 
sense, to points of $\partial D$ where $|\nabla \phi_0|$ is minimal, where $\phi_0$ denotes the first  
eigenfunction of the Dirichlet Laplacian on $D$. Moreover, we provide a fairly detailed description of the convergence of the optimal eigenvalues, eigenfunctions and free boundaries. Our results are based 
on sharp estimates of the optimal eigenvalues, in terms of a suitable notion of relative capacity.
\end{abstract}
\noindent
{\footnotesize \textbf{AMS-Subject Classification}}.
{\footnotesize 49Q10, 47A75, 35R35.}\\
{\footnotesize \textbf{Keywords}}.
{\footnotesize Shape optimization, free boundaries, relative capacity, 
asymptotic of Laplacian eigenvalues, small volume regime.}

\section{Introduction}\label{sec_introduction}

The study of shape optimization problems involving eigenvalues of differential operators has a long 
history, see for instance the monograph \cite{MR2251558}. Beyond fundamental questions, such as the 
existence and regularity of an optimal shape, a main issue in this framework concerns the description of 
the qualitative geometric properties of such shape.

When the problem enjoys some symmetries, for example when the admissible unknown shape $\shp$ is 
allowed to vary in a symmetric class and the differential operator is symmetric too, a typical question deals with 
the symmetry of the optimal shape $\shp^*$ (``is $\shp^*$ a ball?''). This usually reflects on a 
fairly detailed description of the optimizer (``yes, $\shp^*$ is a ball!''). On the other hand, in 
case of asymmetric constraints, this question has generally a negative answer, and the qualitative 
properties of $\shp^{*}$ (connectedness, convexity, star-shapedness, location, ...) are often very 
diffucult to be described. A prototypical case in this direction is that of an inclusion constraint, 
namely when the admissible shape is forced (among other constraints) to lie in a given box $\bx$. 

When a complete description seems out of reach, a natural way to obtain information is to focus on 
a specific regime for the involved parameters. For instance, if also a volume constraint is present, the 
study of the small volume regime is often very effective in providing a detailed description of 
$\shp^*$ in such regime. Also this subject is classical, as testified for instance by the monograph \cite{MR1711532}.
In this setting, one expects concentration to points, or more generally to subsets of low dimension, 
therefore a natural question concerns the precise location of such limit sets within the box $\bx$. To 
detect it, a major role is played by the availability of a sharp asymptotic expansion of the optimal 
energy value with respect to the small volume parameter. Such expansion can be obtained by some blow-up procedure, after 
the classification of the possible entire limit profiles. We refer to the recent papers  \cite{MR4759535,FMPV24}, 
where this strategy has been successfully adopted to analyse the minimization of a weighted eigenvalue 
arising in population dynamics, both with Dirichlet and with Neumann boundary conditions. It turns out 
that the blow-up approach is quite successful when a differential equation holds 
globally in the box $\bx$, as in the previous examples, while it is more difficult to handle in the case of one-phase free boundary problems, see 
e.g. \cite{MR4066101}, also because of the difficulties in providing a full classification of  
entire solutions to one-phase problems \cite{MR4661533,MR4850026}.

On the other hand, in the context of spectral stability theory under possibly singular domain 
perturbations, very much is known about the asymptotic expansions of differential eigenvalues, 
in case of a perturbation consisting in the removal of a prescribed small hole from a bounded 
domain. Indeed, understanding how eigenvalues are sensitive to small variations in the domain is of 
interest in many applications \cite{C_spectrum_holes}, and nowadays very precise formulas are 
available, in a number of different cases 
\cite{MR3950657,FNO_Dirich_bound,MR4489278,MR4391692,MR4848582,Sic25}. Such sharp formulas, many of 
which are very recent, are based on different ad hoc definitions of \emph{(relative) capacity} and, as 
we already mentioned, they are all conceived for explicitly prescribed small perturbations. 
For what we said, it seems natural to try to exploit such formulas to extract qualitative and 
quantitative information in suitable small volume regimes, in situations where the small perturbation is 
not explicit, but instead it is induced by some optimization procedure. This paper can be seen as 
a first attempt to follow this line, and for this we choose one of the most basic spectral shape 
optimization problems, that is the minimization of the principal eigenvalue of the Dirichlet Laplacian 
in a box, in the presence of an obstacle having small volume.
\bigskip

Let $\bx \subset \R^N$, $N \ge 2$, be a bounded domain (open and connected set) with boundary of class $C^{2,\alpha}$, with $\alpha>0$.
For every $\e\in [0,|\bx|)$, where $|\cdot|$ denotes the $N$-dimensional Lebesgue measure, we consider the minimization problem 
\begin{equation}\label{min_prob_eigen_ge}
\begin{split}
\lambda_\e :&= \inf\{\la_1(\bx\setminus K):K\subset \overline{\bx} \text{ compact, } |K|\ge\e\}\\
&= \inf\{\la_1(\shp):\shp\subset \bx \text{ open, } |\shp|\le|\bx|-\e\},
\end{split}
\end{equation}
where 
\begin{equation}\label{eq_R_quot_approx}
 \la_1(\shp)=\min\left\{\frac{\int_{\bx}|\nabla u|^2 \, dx}{\int_{\bx}u^2 \, dx}: \, u \in H^1_0(\shp)\right\}
\end{equation}
denotes the first eigenvalue of the problem 
\begin{equation}\label{prob_eigen_lap_approx}
\begin{cases}
-\Delta u= \la u, & \text{ in } \shp, \\
u=0, &\text{ on } \partial \shp.
\end{cases}
\end{equation}
With this choice, we are already suggesting we are interested in the regime when the volume $|K|=\eps$ 
is small, since the case $|D|$ small is essentially trivial, see below. Notice that, here and in the following, we endow $H^1_0(\bx)$ with the Dirichlet norm, and we consider $H^1_0(\shp) \subset H^1_0(\bx)$, by trivially extending $H^1_0(\shp)$-functions in $\bx$.

By monotonicity with respect to domain inclusion, when $\e=0$, problem \eqref{min_prob_eigen_ge} is achieved by $\shp=\bx$, thus $\la_0=\la_1(\bx)$, the first eigenvalue of the Laplacian with Dirichlet boundary conditions in 
$\bx$:
\begin{equation}\label{eq_phi}
\begin{cases}
-\Delta u= \la u, & \text{ in } \bx, \\
u=0, &\text{ on } \partial \bx.
\end{cases}
\end{equation}
Being $\bx$ bounded and connected, $\la_0$ is positive and simple, and any associated eigenfunction does not change sign. 
In the following we denote by $\phi_0$ the (unique) eigenfunction associated to $\la_0$ that satisfies
\begin{equation}\label{hp_phi}
\norm{\phi_0}_{L^2(\bx)} =1 \quad \text{ and } \quad  \phi_0 >0 \text{ in } \bx.  
\end{equation}

There is a vast literature concerning problem \eqref{min_prob_eigen_ge}, see for 
example \cite{H_min_open}, \cite[Chapter 3]{MR2251558}, \cite[Example 4.12]{B_survey} or \cite[Problem 1]{HZ_op} for an overview. We provide in the following a 
quick, informal presentation of the main known results which we are concerned with, and we refer 
to Appendix \ref{sec_app_prop_min} for a more formal discussion and for precise references to the previous literature. First of all, it is well known that \eqref{min_prob_eigen_ge} is achieved for every $\e \in (0,|\bx|)$, although the minimizer may not be unique. Moreover, any (open) minimizer of \eqref{min_prob_eigen_ge} is connected, with measure $|\bx|-\e$, so that \eqref{min_prob_eigen_ge} is equivalent to 
\begin{equation}\label{min_prob_eigen_=}
\min\{\la_1(\shp):\shp\subset \bx \text{ open, } |\shp|=|\bx|-\e\}.
\end{equation}

We notice that, in principle, every minimizer is defined up to null capacity 
modifications, all associated to the same eigenfunction, seen as an element of 
$H^1_0(\bx)$.  On the other hand, it is known that such eigenfunctions admit a 
representative which is Lipschitz continuous in $\overline{\bx}$, so that 
their positivity set (is open and) can be exactly identified. In the following, we denote by $\phi_{\e}$ such a 
representative, choosing it positive and normalized in $L^2(\bx)$:
\begin{equation}\label{hp_phi_e}
\norm{\phi_{\e}}_{L^2(\bx)} =1, \quad \text{ and } \quad  \phi_{\e} \ge0 \text{ in } \bx,  
\end{equation}
and by $\shp_\e$ its positivity set, namely
\begin{equation}\label{D_e_def}
\shp_\e:=\{ x\in\bx: \,  \phi_\e(x)>0 \}.
\end{equation}
As we already noticed,  $\shp_\e$ is connected for every 
$\e \in (0,|\bx|)$. Furthermore, $\phi_\eps$ is a weak solution of the 
variational problem
\begin{equation}\label{eq_phi_e}
\int_{\bx} \nabla \phi_\e \cdot \nabla \psi \, dx= \la_\e \int_{\bx} \phi_\e  \psi \, dx \quad \text{ for any } \psi \in H^1_0(\shp_\e)
\end{equation}
and there exists a constant $\Lambda_\eps>0$ such that
\begin{equation}\label{eq:visc_phi_e}
\begin{cases}
-\Delta \phi_\e= \la_\e \phi_\e, \text{ in } \shp_\e,\\
|\nabla \phi_\e| = \sqrt{ \Lambda_\e}, \text{ in } \partial \shp_\e \cap \bx,\\
|\nabla \phi_\e| \ge  \sqrt{ \Lambda_\e}, \text{ in } \partial \shp_\e \cap \partial  \bx,
\end{cases}
\end{equation}
where the boundary conditions hold in viscosity sense. This last fact draws a connection between this problem and the classical one-phase Bernoulli problem 
\cite{V_book_free_boun}, and in turn this implies that the free boundary 
$ \partial \shp_\e \cap \bx$ is regular, up to a singular set of small Hausdorff dimension 
(codimension bigger than 4). Once again, 
we refer to Appendix \ref{sec_app_prop_min} for more details and full bibliographic information.

Let us define, for every $\e \in (0,|\bx|)$, 
\[
K_\e:=\overline{\bx\setminus \shp_\e},
\]
so that
\begin{equation}\label{K_e_def}
K_\e \text{ compact, } |K_\e|=\e, \text{ and }
\lambda_1(\bx\setminus K_\e)=\la_\e=\min\{\la_1(\shp): |\shp|=|\bx|-\e\}.
\end{equation}

If $\e \in (0,|\bx|)$ is close enough to $|\bx|$, then the optimal sets $\shp_\eps$, $K_\e$ are known explicitly.
Indeed,  let $\rho(\bx)$ denote the inradius of $\bx$, that is 
\begin{equation}\label{def_in_rad}
\rho(\bx) := \sup\{r>0:B_r(x) \subset \bx \text{ for some } x \in \bx\},
\end{equation}
where, for any $x \in \R^N$ and $r>0$,  we denote by $B_r(x) := \{ y \in \mathbb{R}^N : |x - y| < r \}$ the open ball of radius $r$ centered at $x$. Then, if
\begin{equation}\label{def_e0}
\e \ge\e_0:= |\bx| - \rho(\bx)^N |B_1(0)|,
\end{equation}
by the Faber-Krahn inequality any ball contained in $\bx$ and of measure $|\bx|-\eps$ is a minimizer. 
As we already mentioned, this suggests that the interesting small volume regime is the one with $\eps$ small.

In the present paper, we discuss some properties of the optimal obstacles $K_\e$ and of the 
corresponding eigenpairs $(\la_\e,\phi_\e)$ in the asymptotic regime $\e \to 0^+$, aiming at
a complete description of the limit, without further restrictions on the box $\bx$. In particular, 
this will be done in strict relation with a suitable notion of relative capacity, see Definition 
\ref{def_capa} ahead.

Concerning the sets, our main result 
states that $K_\eps$ accumulates, up to subsequences, to some points of $\partial \Omega$ where 
$|\nabla \phi_0|$ is minimum. More precisely, it is not difficult to see that, for any vanishing sequence, it is always possible to find a subsequence $\e_n\to0$ as $n\to+\infty$, and a compact set $K_0\subset\overline{\bx}$, such that 
\[
K_{\e_n}\xrightarrow{H} K_0,
\]
in the sense of Hausdorff (see Definition \ref{def:Hausdorff}). In this regard, denoting with $\chi_E$ 
the characteristic function of a measurable set $E$, our main result is 
the following.
\begin{theorem}\label{theo_sets_expansion}
Let $\bx \subset \R^N$ be a bounded domain of class $C^{2,\alpha}$, and let $\phi_0$  and $K_{\e}$ 
be as in \eqref{hp_phi}, \eqref{K_e_def} respectively. 

There exist a compact,  
non-empty set $K_0$ and a positive Radon measure $\mu_0$ such that, up to subsequences,
\[
\begin{split}
K_{\e}&\xrightarrow{H} K_0\subset\partial\bx\qquad\text{(in Hausdorff sense)},\\
\frac{1}{\e}\chi_{K_\e} & \xrightharpoonup{\,*\,} \mu_0\qquad\qquad\quad\text{(in the sense of measure),}
\end{split}
\]
as $\eps\to0^+$.  Moreover, $\mu_0(K_0)=1$ and 
\begin{equation}\label{eq_min_in_K_mu}
	\supp \mu_0 \subset K_0 \subset  \{x\in\partial\bx : |\nabla \phi_0(x)|=\min_{ \partial \bx} |\nabla \phi_0|\}.
\end{equation}
\end{theorem}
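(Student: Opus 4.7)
The plan is to split the proof into three tasks---extracting convergent subsequences, showing the Hausdorff limit lies on $\partial \bx$, and pinning it inside the minimum set of $|\nabla\phi_0|$---all of which combine standard compactness arguments with the sharp capacity-based asymptotic expansions of $\la_\e - \la_0$ that the introduction advertises.

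First, by the Blaschke selection theorem the compact subsets of $\overline{\bx}$ are sequentially compact in the Hausdorff topology, so along a subsequence $\e_n\to 0^+$ one has $K_{\e_n}\xrightarrow{H} K_0$ for some compact $K_0\subset\overline{\bx}$. The measures $\mu_\e:=\e^{-1}\chi_{K_\e}\,dx$ are probability measures supported in the compact set $\overline{\bx}$; since $C(\overline{\bx})$ is separable, Banach--Alaoglu yields a further subsequence with $\mu_{\e_n}\xrightharpoonup{*}\mu_0$, and testing against the constant $1\in C(\overline{\bx})$ shows that no mass escapes: $\mu_0\geq 0$ and $\mu_0(\overline{\bx})=1$. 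For any $x\notin K_0$, Hausdorff convergence produces an open neighborhood $V\ni x$ disjoint from $K_{\e_n}$ for $n$ large, hence $\mu_0(V)=0$ by the Portmanteau theorem; therefore $\supp\mu_0\subset K_0$ and $\mu_0(K_0)=1$, and in particular $K_0$ is nonempty.

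To prove $K_0\subset\partial \bx$, I would compare an upper bound on $\la_\e-\la_0$ of order $\e$, obtained by an explicit test obstacle placed near a boundary point where $|\nabla\phi_0|$ is minimal, with a capacitary lower bound: if a fixed positive fraction of $|K_\e|$ stayed in a compact subset $\bx'\Subset\bx$, then the isocapacitary inequality combined with the capacity-based formula for $\la_\e-\la_0$ would force $\la_\e-\la_0\gtrsim \e^{(N-2)/N}(\min_{\bx'}\phi_0)^2$ when $N\ge 3$, and $\la_\e-\la_0\gtrsim (\min_{\bx'}\phi_0)^2/|\log\e|$ when $N=2$; both contradict the $O(\e)$ upper bound as $\e\to 0^+$. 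A refinement using the viscosity condition \eqref{eq:visc_phi_e} and the Lipschitz regularity of $\phi_\e$ then rules out Hausdorff accumulation inside $\bx$ even in the absence of concentrated mass.

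Finally, for the inclusion $K_0\subset\{|\nabla \phi_0|=\min_{\partial \bx}|\nabla\phi_0|\}$, I would combine the sharp two-sided asymptotics $\la_\e-\la_0 = \e\,\min_{\partial \bx}|\nabla\phi_0|^2 + o(\e)$ with a lower bound of the form
\[
\la_\e-\la_0 \ge (1-o(1))\,\e\int_{\overline{\bx}}|\nabla\phi_0(x)|^2\,d\mu_\e(x),
\]
itself coming from the capacitary asymptotic for thin obstacles near $\partial \bx$ together with the boundary expansion $\phi_0(x)\simeq|\nabla\phi_0(\pi(x))|\dist(x,\partial \bx)$, where $\pi$ denotes the nearest-point projection onto $\partial \bx$. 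Since $|\nabla\phi_0|^2\in C(\overline{\bx})$ by the $C^{2,\alpha}$ regularity of $\partial \bx$, passing to the weak-$*$ limit yields $\int_{K_0}|\nabla\phi_0|^2\,d\mu_0\le \min_{\partial \bx}|\nabla\phi_0|^2$, which together with the pointwise lower bound $|\nabla\phi_0|^2\ge \min_{\partial\bx}|\nabla\phi_0|^2$ on $\partial\bx$ forces $\supp\mu_0$ into the minimum set. The main obstacle is upgrading this inclusion from $\supp\mu_0$ to the whole of $K_0$: a Hausdorff accumulation point need not carry $\mu_0$-mass, and ruling out such ``ghost'' points appears to require a quantitative relocation argument, moving to an optimal boundary location any component of $K_\e$ approaching a point where $|\nabla\phi_0|$ strictly exceeds the minimum, and controlling from below the mass and diameter of such a component via the free-boundary regularity of the associated one-phase problem.
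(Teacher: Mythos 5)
The compactness step (Blaschke selection, Banach--Alaoglu, portmanteau argument giving $\supp\mu_0\subset K_0$ and $\mu_0(K_0)=1$) is correct and essentially matches the paper. For $K_0\subset\partial\bx$ you overcomplicate: the paper (Lemma \ref{lem_cap_measure}) simply notes that $\phi_{\e_n}\to\phi_0$ uniformly (Lemma \ref{lem_eq_eigen_K}) and $\phi_0>0$ in $\bx$, so every interior point $x_0$ eventually has $B_r(x_0)\subset\shp_{\e_n}$, hence $x_0\notin K_0$. Your isocapacitary estimate only treats the case where a fixed fraction of mass concentrates in a compact $\bx'\Subset\bx$, and you yourself flag that a further ``refinement'' is needed for Hausdorff accumulation carrying no mass; the uniform-convergence argument covers both cases at once.

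The decisive gap is the full inclusion $K_0\subset\{\,|\nabla\phi_0|=\min_{\partial\bx}|\nabla\phi_0|\,\}$. Your weak-$*$ limit argument correctly gives $\supp\mu_0\subset\{\,|\nabla\phi_0|=\min\,\}$, and you rightly identify the passage from $\supp\mu_0$ to all of $K_0$ as the hard step, but the ``quantitative relocation argument'' you gesture at is not developed and is not the mechanism that actually closes the proof. The paper's route is different: exploit the overdetermined free-boundary condition $|\nabla\phi_\e|=\sqrt{\Lambda_\e}$ on $\partial\shp_\e\cap\bx$ from \eqref{eq:visc_phi_e}, upgraded by the uniform $C^{1,1/2}$ bound \eqref{ineq_uniform_norms_C1} of Theorem \ref{theorem_reg_boundary} (which in turn rests on the nondegeneracy Lemma \ref{lem_non_dege}). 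Given any $x_0\in K_0$, a connectedness argument produces $x_n\in\overline{\partial\shp_{\e_n}\cap\bx}$ with $x_n\to x_0$; the uniform H\"older estimate on $\nabla\phi_{\e_n}$ and the interior $C^1$ convergence $\phi_{\e_n}\to\phi_0$ then force $\sqrt{\Lambda_{\e_n}}=|\nabla\phi_{\e_n}(x_n)|\to|\nabla\phi_0(x_0)|$. Since $\sqrt{\Lambda_{\e_n}}$ is independent of $x_0$, $|\nabla\phi_0|$ is constant on $K_0$; combined with $\liminf_n \capa{\bx}{K_{\e_n},\phi_0}/\e_n\ge\min_{K_0}|\nabla\phi_0|^2$ from Proposition \ref{prop_lower_estimates_capa} and the sharp limit $\capa{\bx}{K_{\e_n},\phi_0}/\e_n\to\min_{\partial\bx}|\nabla\phi_0|^2$ from the proof of Theorem \ref{theo_eigen_expansion}, that constant must equal $\min_{\partial\bx}|\nabla\phi_0|$. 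Without the uniform free-boundary regularity as input you have no control of the gradient at a Hausdorff-limit point that carries no $\mu_0$-mass, and your relocation heuristic would still require a quantitative lower bound on the capacity contribution of such components, which you do not supply.
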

\begin{remark}\label{rmk:connected?}
According to the previous theorem, it is clear that, in case
\[
 \{x\in\partial\bx : |\nabla \phi_0(x)|=\min_{ \partial \bx} |\nabla \phi_0|\} = \{x_0\},
\]
for some $x_0\in\partial\bx$, then $K_0= \{x_0\} =\supp \mu_0$, too. On the other hand, for specific 
choices of the box $\bx$, such set may contain more than one point: for instance, in case $\bx$ is an 
ellipse in $\R^2$, it consists of two points (in particular, it is also disconnected). A 
more striking example is provided when taking $\shp_\eps$ as the box (although in 
principle $\partial\shp_\eps$ may not be $C^{2,\alpha}$, see Appendix \ref{sec_app_prop_min}); indeed, 
in view of \eqref{eq:visc_phi_e}, we know that the set $\{x\in\partial\shp_\eps : |\nabla \phi_\eps(x)|
=\min_{ \partial\shp_\eps} |\nabla \phi_\eps|\}$ contains the free boundary $\partial \shp_\eps\cap\bx$, and hence it has positive $(N-1)$-Hausdorff measure.
 
In all these cases $K_0$ may be a strict subset of the set above, although we conjecture this should not 
be the case.
\end{remark}

As a matter of fact, the previous theorem follows by refined estimates about the convergence of the associated eigenvalues and eigenfunctions, as depicted in the following result.
\begin{theorem}\label{theo_eigen_expansion}
Under the same assumptions as in Theorem \ref{theo_sets_expansion}, let $\lambda_\e$ be as in \eqref{min_prob_eigen_ge}, $\shp_{\e} $ as in \eqref{D_e_def}, 
and $\phi_\e$ as in \eqref{hp_phi_e}. Then, as $\e \to 0^+$,
\begin{align}
\label{eq_eigen_Ke_precise}
\la_\e-\la_0&=\e \min_{x \in \partial \bx} |\nabla \phi_0(x)|^2+ o(\e),\\
\label{eq_eigenfunction_Ke_precise}
\int_{\shp_\e}|\nabla(\phi_\e-\phi_0)|^2 \, dx  &=\e \min_{x \in \partial \bx} |\nabla \phi_0(x)|^2+ o(\e), \\
\label{eq_eigenfunction_norm_H10}
 \int_{\bx}|\nabla(\phi_\e-\phi_0)|^2 \, dx &= 2\e \min_{x \in \partial \bx} |\nabla \phi_0(x)|^2+ o(\e).
 \end{align}
Furthermore, for any $\alpha \in (0,1)$ 
\begin{equation}\label{limit_norm_holder}
\lim_{\e \to 0^+}\norm{\phi_\e -\phi_0}_{C^{0,\alpha}(\bx)} =0.
\end{equation}
\end{theorem}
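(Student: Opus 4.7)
My strategy is to prove \eqref{eq_eigen_Ke_precise} first, deduce \eqref{eq_eigenfunction_Ke_precise}--\eqref{eq_eigenfunction_norm_H10} from it via two elementary algebraic identities, and obtain \eqref{limit_norm_holder} by compactness and interpolation. As a preliminary step, a crude test obstacle gives $\la_\e=\la_0+o(1)$, so $\norm{\nabla\phi_\e}_{L^2(\bx)}^2=\la_\e$ stays bounded; up to a subsequence $\phi_\e\rightharpoonup\phi^*$ in $H^1_0(\bx)$ and strongly in $L^2(\bx)$, with $\phi^*$ a nonnegative unit-norm eigenfunction of $\la_0$, hence $\phi^*=\phi_0$ by simplicity, and uniqueness of the limit gives full-sequence convergence. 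Combined with the uniform Lipschitz bound on $\phi_\e$ recalled before the statement, Arzel\`a--Ascoli upgrades this to uniform convergence on $\overline\bx$, and interpolation $\norm{\phi_\e-\phi_0}_{C^{0,\alpha}(\bx)}\le\norm{\phi_\e-\phi_0}_{C^0(\bx)}^{1-\alpha}\norm{\phi_\e-\phi_0}_{C^{0,1}(\bx)}^\alpha$ with the uniform $C^{0,1}$-bound yields \eqref{limit_norm_holder} for every $\alpha\in(0,1)$.

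For \eqref{eq_eigen_Ke_precise}, write $m:=\min_{\partial\bx}|\nabla\phi_0|^2$. For the \emph{upper bound}, I would pick $x_0\in\partial\bx$ with $|\nabla\phi_0(x_0)|^2=m$, an explicit test obstacle $\widetilde K_\e$ of volume $\e$ adjacent to $x_0$ (say a half-ball of radius $\sim\e^{1/N}$), and plug $\phi_0-p_\e\in H^1_0(\bx\setminus\widetilde K_\e)$ into the Rayleigh quotient, where $p_\e$ minimizes $\int|\nabla p|^2$ over $p\in H^1_0(\bx)$ with $p=\phi_0$ on $\widetilde K_\e$. A rescaling argument, together with $\phi_0(x_0)=0$ and continuity of $\nabla\phi_0$ at $x_0$, gives $\int_\bx|\nabla p_\e|^2\,dx=m\e+o(\e)$ and $\int_\bx(\phi_0-p_\e)^2\,dx=1+o(\e)$, hence $\la_\e\le\la_0+m\e+o(\e)$. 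For the matching \emph{lower bound} I use the Bernoulli structure \eqref{eq:visc_phi_e}: integration by parts in $\shp_\e$, using $\phi_\e=0$ on $\partial\shp_\e$, $\phi_0=0$ on $\partial\bx$, and $|\nabla\phi_\e|=\sqrt{\Lambda_\e}$ on the regular part of $\partial\shp_\e\cap\bx$, yields
\begin{equation*}
(\la_\e-\la_0)\int_{\shp_\e}\phi_\e\phi_0\,dx=\sqrt{\Lambda_\e}\int_{\partial\shp_\e\cap\bx}\phi_0\,d\sigma.
\end{equation*}
The divergence theorem rewrites $\int_{\partial\shp_\e\cap\bx}\phi_0\,\nu_{\shp_\e}\,d\sigma=-\int_{K_\e}\nabla\phi_0\,dx$, and continuity of $\nabla\phi_0$ at any Hausdorff limit $y_0$ of $K_\e$ forces $\bigl|\int_{K_\e}\nabla\phi_0\,dx\bigr|=|\nabla\phi_0(y_0)|\,\e+o(\e)$; passing to the limit in the free-boundary condition at $y_0$ gives $\sqrt{\Lambda_\e}\to|\nabla\phi_0(y_0)|$. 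Hence $\la_\e-\la_0\ge|\nabla\phi_0(y_0)|^2\e+o(\e)$, and comparison with the upper bound forces $|\nabla\phi_0(y_0)|^2=m$, proving \eqref{eq_eigen_Ke_precise} (and the localisation asserted in Theorem~\ref{theo_sets_expansion} as a by-product).

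Finally, the eigenfunction estimates follow from the identities
\begin{align*}
\int_{\bx}|\nabla(\phi_\e-\phi_0)|^2\,dx &=(\la_\e-\la_0)+\la_0\norm{\phi_\e-\phi_0}_{L^2(\bx)}^2,\\
\int_{\bx}|\nabla(\phi_\e-\phi_0)|^2\,dx-\int_{\shp_\e}|\nabla(\phi_\e-\phi_0)|^2\,dx &=\int_{K_\e}|\nabla\phi_0|^2\,dx,
\end{align*}
which are immediate from $\norm{\phi_\e}_{L^2(\bx)}=\norm{\phi_0}_{L^2(\bx)}=1$, the weak eigenvalue relation $\int_\bx\nabla\phi_\e\cdot\nabla\phi_0\,dx=\la_0\int_\bx\phi_\e\phi_0\,dx$, and $\phi_\e=\nabla\phi_\e=0$ a.e.\ on $K_\e$. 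The lower-bound analysis already yields $\int_{K_\e}|\nabla\phi_0|^2\,dx=m\e+o(\e)$, which together with \eqref{eq_eigen_Ke_precise} inserted into the two identities gives \eqref{eq_eigenfunction_norm_H10}, \eqref{eq_eigenfunction_Ke_precise} and, as a by-product, $\norm{\phi_\e-\phi_0}_{L^2(\bx)}^2=m\e/\la_0+o(\e)$. \textbf{Main obstacle:} the lower bound in the previous paragraph. Since $K_\e$ is produced by the optimization rather than prescribed, controlling both $\sqrt{\Lambda_\e}$ and the vector $\int_{K_\e}\nabla\phi_0\,dx$ at the right rate requires extracting sharp geometric information from the overdetermined condition \eqref{eq:visc_phi_e}; I expect the relative-capacity framework (Definition~\ref{def_capa}) to be the natural tool to make these asymptotics quantitative and to absorb the error terms arising from the regular/singular decomposition of the free boundary.
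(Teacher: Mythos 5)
The core estimate \eqref{eq_eigen_Ke_precise} is handled incorrectly, in particular the upper bound, and this is a genuine gap that a change of competitor cannot paper over without changing its scaling.

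\textbf{Upper bound.} You propose to test with a half-ball $\widetilde K_\e$ of radius $r_\e\sim\e^{1/N}$ attached to a minimum point $x_0\in\partial\bx$ of $|\nabla\phi_0|$, and you claim $\capa{\bx}{\widetilde K_\e,\phi_0}=m\e+o(\e)$ with $m=\min_{\partial\bx}|\nabla\phi_0|^2$. This is false: a blow-up at scale $r_\e$ reduces the relative capacity of the half-ball to the Dirichlet energy $c_1$ of the harmonic extension of $z_N\bigr|_{\partial B_1^+}$ to the half-space minus $B_1^+$ (zero on $\{z_N=0\}$, decaying at infinity), and one gets $\capa{\bx}{\widetilde K_\e,\phi_0}=(c_1/|B_1^+|)\,m\e+o(\e)$. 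Since the extension is not identically zero outside $B_1^+$, one has $c_1>|B_1^+|$ strictly, so your competitor only gives $\la_\e-\la_0\le c\,m\e+o(\e)$ with $c>1$. The point missed is that the optimal obstacle must be \emph{flat}: much wider than tall, so that the ``outside'' part of the capacitary energy becomes negligible relative to the ``inside'' contribution $\int_{K}|\nabla\phi_0|^2\approx m\e$. This is exactly why the paper builds the competitor $H_{r,\e}$ in \eqref{def_He} with a \emph{fixed} base radius $r$ and height of order $\e/r^{N-1}$, computes the limit of capacity over $\e$ for each fixed $r$ via Lemmas \ref{lem_capa_He}--\ref{lem_upper_estimates_capa_He}, and only then sends $r\to0$. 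A competitor with $r\sim\e^{1/N}$ (so width comparable to height) cannot produce the sharp constant.

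\textbf{Lower bound.} Your integration-by-parts argument on $\shp_\e$ uses the pointwise Bernoulli condition $|\nabla\phi_\e|=\sqrt{\Lambda_\e}$ on $\partial\shp_\e\cap\bx$ and the identity $\int_{\partial\shp_\e\cap\bx}\phi_0\,\nu\,d\sigma=-\int_{K_\e}\nabla\phi_0\,dx$. This requires the free boundary to be sufficiently regular (and $\phi_\e$ to be $C^1$ up to it, uniformly in $\e$), which is precisely the content of Theorem \ref{theorem_reg_boundary}; as Remarks \ref{rmk:competvsminim} and \ref{rem:final_rem} in the paper emphasize, such computations with $\phi_\e$ are only justified \emph{after} that theorem. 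The paper's lower bound instead is the regularity-free observation that $\capa{\bx}{K_\e,\phi_0}\ge\int_{K_\e}|\nabla\phi_0|^2$ (since $V_{K_\e}=\phi_0$ on $K_\e$), followed by the weak-$\ast$ compactness argument for the measures $\e^{-1}\chi_{K_\e}\,dx$ in Proposition \ref{prop_lower_estimates_capa}. You should adopt that route, or else first prove Theorem \ref{theorem_reg_boundary} from the H\"older convergence alone.

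\textbf{Eigenfunction estimates.} Your two identities are correct, but the deduction is underdetermined: the first identity relates $\int_\bx|\nabla(\phi_\e-\phi_0)|^2$ to $\la_\e-\la_0$ and $\|\phi_\e-\phi_0\|_{L^2}^2$, the second relates the $\int_\bx$ and $\int_{\shp_\e}$ integrals through $\int_{K_\e}|\nabla\phi_0|^2$; you still need an independent input for one of $\int_{\shp_\e}|\nabla(\phi_\e-\phi_0)|^2$ or $\|\phi_\e-\phi_0\|_{L^2}^2$ to solve the system. The paper obtains \eqref{eq_eigenfunction_Ke_precise} directly from the quantitative spectral stability estimate \eqref{conv_eigenfunctions_H1_norm_K}, not from the algebraic identities, and then \eqref{eq_eigenfunction_norm_H10} by adding $\int_{K_\e}|\nabla\phi_0|^2=m\e+o(\e)$. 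The compactness/interpolation route to \eqref{limit_norm_holder} is fine and matches Lemma \ref{lem_eq_eigen_K}.
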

\begin{remark}\label{rmk:further_pb}
Theorems \ref{theo_sets_expansion} and \ref{theo_eigen_expansion} are an example of how sharp results in 
the theory of spectral perturbation allow a good description of associated shape optimization problems, 
when some parameter is small. We believe that the same strategy can be fruitfully applied in different 
situations, e.g. involving the removal of zero-measure, small capacity obstacles \cite{MR3369065}, or 
eigenvalues higher than the first one \cite{MR4412076}.
\end{remark}

The uniform convergence of $\phi_\e$ to the regular limit $\phi_0$ has interesting consequences: indeed, 
it allows to apply the improvement of flatness techniques, developed for one-phase free boundary problems in \cite{DS_flatness,CLS_boundary_improv_flatness}, in any free boundary points and uniformly in $\eps$. In turn, this 
provides the following regularity result.
\begin{theorem}\label{theorem_reg_boundary}
Under the previous assumptions and notations, there exists $\e_1 \in (0,\e_0)$ such that  for any $\e \in (0,\e_1)$ we have that 
	$\partial \shp_\e \text{ is a  } C^{1,\frac{1}{2}}$  smooth hypersurfaces  while $\partial \shp_\e \cap \bx$ is analytic. Furthermore
	\begin{equation}\label{ineq_uniform_norms_C1}
		\sup_{\e \in (0, \e_1)} \norm{\phi_\e}_{C^{1,\frac{1}{2}}(\shp_\e)} <+\infty.
	\end{equation}
\end{theorem}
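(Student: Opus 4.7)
The argument combines the uniform closeness of $\phi_\eps$ to the regular profile $\phi_0$ provided by Theorem~\ref{theo_eigen_expansion} with the improvement-of-flatness machinery for the one-phase Bernoulli problem with bounded right-hand side, in its interior \cite{DS_flatness} and boundary \cite{CLS_boundary_improv_flatness} versions.

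The first step is to show that $\Lambda_\eps \to \Lambda_0 := \min_{\partial \bx}|\nabla \phi_0|^2 > 0$ as $\eps \to 0^+$. Theorem~\ref{theo_sets_expansion} gives $K_\eps \xrightarrow{H} K_0 \subset \{x \in \partial \bx : |\nabla \phi_0(x)| = \sqrt{\Lambda_0}\}$, and \eqref{limit_norm_holder} gives $\phi_\eps \to \phi_0$ in $C^{0,\alpha}(\overline{\bx})$. The free-boundary condition $|\nabla \phi_\eps| = \sqrt{\Lambda_\eps}$ on $\partial \shp_\eps \cap \bx$, combined with these convergences, then forces $\sqrt{\Lambda_\eps} \to \sqrt{\Lambda_0}$; in particular $\Lambda_\eps$ is uniformly bounded and bounded away from zero for $\eps$ small.

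Next, I would establish a uniform flatness estimate at every free-boundary point. Pick $x_\eps \in \partial \shp_\eps$ and let $y_\eps \in \partial \bx$ minimize the distance from $x_\eps$; by the Hausdorff convergence, $|x_\eps - y_\eps| \to 0$. Near $y_\eps$, $\phi_0$ vanishes on $\partial \bx$ with non-degenerate inward normal derivative close to $\sqrt{\Lambda_0}$, so the uniform $C^{0,\alpha}$ convergence $\phi_\eps \to \phi_0$ implies, for every prescribed $\delta>0$, the existence of $r_0>0$ and $\eps_\delta>0$ such that for every $\eps \in (0,\eps_\delta)$ and $r \in (0,r_0)$ the rescaling $\phi_\eps(x_\eps + r\,\cdot\,)/(r\sqrt{\Lambda_\eps})$ is $\delta$-close in $L^\infty(B_1)$ to a one-plane solution $(\nu_\eps\cdot x)_+$. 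With this uniform flatness input, at interior free-boundary points $x_\eps\in\partial\shp_\eps\cap\bx$ I would apply the improvement-of-flatness scheme of \cite{DS_flatness}, adapted to the uniformly bounded right-hand side $\lambda_\eps \phi_\eps$; iterating yields $C^{1,1/2}$ regularity of $\partial \shp_\eps \cap \bx$ with constants uniform in $\eps$. At contact points $x_\eps\in\partial\shp_\eps\cap\partial\bx$, the analogous role is played by the boundary improvement of flatness of \cite{CLS_boundary_improv_flatness}, which accommodates the $C^{2,\alpha}$ fixed boundary $\partial\bx$ together with the one-sided condition $|\nabla \phi_\eps| \ge \sqrt{\Lambda_\eps}$.

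Once $\partial \shp_\eps \cap \bx$ is $C^{1,\alpha}$, a classical Kinderlehrer-Nirenberg hodograph bootstrap applied to the overdetermined system $-\Delta \phi_\eps = \lambda_\eps \phi_\eps$, $\phi_\eps = 0$ and $|\nabla \phi_\eps| = \sqrt{\Lambda_\eps}$ on the free boundary upgrades the interior free boundary to analyticity. The uniform bound \eqref{ineq_uniform_norms_C1} then follows from boundary Schauder estimates for $-\Delta \phi_\eps = \lambda_\eps \phi_\eps$ on $\shp_\eps$, whose boundary is now $C^{1,1/2}$ with norm uniform in $\eps$. The main obstacle is checking the flatness hypotheses at contact points with constants independent of $\eps$: there the overdetermined condition degenerates into an inequality and the free boundary meets the fixed boundary $\partial \bx$, so the boundary version of improvement of flatness from \cite{CLS_boundary_improv_flatness} must be applied with care, combining the Hausdorff convergence, the uniform $C^{0,\alpha}$ estimates, and the $C^{2,\alpha}$ regularity of $\partial \bx$ to ensure uniformity.
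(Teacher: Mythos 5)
Your plan follows the same overall strategy as the paper---uniform flatness from the $C^{0,\alpha}$ convergence of $\phi_\e$ to the linear profile of $\phi_0$ near $\partial\bx$, followed by improvement of flatness in the interior \cite{DS_flatness} and boundary \cite{CLS_boundary_improv_flatness} versions, and finally Schauder estimates for the uniform norm bound. However, the very first step contains a circularity that the paper carefully avoids. You assert that $\sqrt{\Lambda_\e}\to\min_{\partial\bx}|\nabla\phi_0|$ follows from the Hausdorff convergence $K_\e\xrightarrow{H}K_0$, the $C^{0,\alpha}$ convergence, and the free boundary condition; but $C^{0,\alpha}$ convergence does not control gradients on the free boundary, and the claimed limit is precisely Theorem~\ref{theor_Lae_convergence}, which the paper proves \emph{after} Theorem~\ref{theorem_reg_boundary} and \emph{using} the uniform $C^{1,1/2}$ bounds established here. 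You also invoke the inclusion $K_0\subset\{x\in\partial\bx:|\nabla\phi_0(x)|=\min_{\partial\bx}|\nabla\phi_0|\}$ from Theorem~\ref{theo_sets_expansion}, but that inclusion is likewise a byproduct of Theorem~\ref{theor_Lae_convergence} and is not available at this stage; only the weaker $K_0\subset\partial\bx$ from Lemma~\ref{lem_cap_measure} can be used.

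What the improvement-of-flatness machinery actually requires is not the full convergence of $\Lambda_\e$ but a non-degeneracy lower bound $\liminf_{\e\to0^+}\sqrt{\Lambda_\e}\ge\eta>0$. The paper obtains this independently and before the regularity theorem, via a Hopf-lemma argument (Lemma~\ref{lem_non_dege}): since $\partial\bx$ enjoys a uniform interior ball condition and $K_\e\xrightarrow{H}K_0\subset\partial\bx$, one finds, for small $\e$, free boundary points touched from inside by balls of fixed radius entirely contained in $\shp_\e$; then Hopf's lemma together with the uniform convergence $\phi_\e\to\phi_0>0$ on compact subsets of $\bx$ gives the lower bound, using Remark~\ref{remark_inner_ball_reg} to ensure the touching points are regular. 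If instead you want to extract the bound from the flatness plus the viscosity condition, as you sketch, you would have to carry out an explicit viscosity comparison (sliding a plane of slope $|\nabla\phi_0(x)|\ge\frac12\min_{\partial\bx}|\nabla\phi_0|$ below $\phi_\e$ and using that $|\nabla\phi_\e|=\sqrt{\Lambda_\e}$ in viscosity sense); as written this step is only asserted, not proved. Apart from this logical gap at the outset, the rest of your argument (uniform flatness from the Taylor expansion and $C^{0,\alpha}$ convergence, improvement of flatness, hodograph for analyticity, Schauder for \eqref{ineq_uniform_norms_C1} using the uniform Lipschitz estimate \eqref{ineq_grad_estimate}) is essentially the paper's argument.
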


We remark that the main difference in the previous result, with respect to the classical free boundary regularity theory, lies in the absence of singular cones, in any dimension: for $\eps$ sufficiently small, all free boundary points are regular, or equivalently, have a flat blow-up. As $\epsilon \to 0^+$, this becomes intuitively clear from an energetic point of view, as the sets $K_\epsilon$ converge to a subset of the (regular) boundary of $\bx$. 

Our last result concerns the constant value $\sqrt{\Lambda_\eps}$ of the normal derivative of $\phi_\eps$ on the free boundary. Indeed, thanks to the uniform estimate proved in \eqref{ineq_uniform_norms_C1}, we can compute the exact asymptotic expansion of $\sqrt{\Lambda_\e}$ as 
$\e \to 0^+$.

\begin{theorem}\label{theor_Lae_convergence}
Under the previous assumptions, let  $\Lambda_\eps>0$ be defined as in 
\eqref{eq:visc_phi_e}. Then
\begin{equation}\label{limit_Lae_int_phi_0}
 \lim_{\e\to 0^+} \sqrt{\Lambda_\e}= \min_{x \in \partial \bx}  |\nabla \phi_0(x)|.
\end{equation}
\end{theorem}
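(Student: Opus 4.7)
The strategy is to compare $|\nabla\phi_\e(x_\e)|=\sqrt{\Lambda_\e}$, evaluated at a free boundary point $x_\e\in\partial\shp_\e\cap\bx$, with $|\nabla\phi_0(z_r)|$ at an interior point $z_r\in\bx$ kept at controlled distance from $K_0$, relying on two ingredients already established: the uniform bound $M:=\sup_{\e\in(0,\e_1)}\norm{\phi_\e}_{C^{1,1/2}(\shp_\e)}<+\infty$ from \eqref{ineq_uniform_norms_C1}, which supplies a common Hölder modulus for $\nabla\phi_\e$, and the convergence \eqref{limit_norm_holder} in $C^{0,\alpha}(\overline{\bx})$, which together with the uniform $C^{1,1/2}$ bound upgrades via Ascoli--Arzelà to $C^1_{\mathrm{loc}}$ convergence of $\phi_\e$ to $\phi_0$ on any compact subset of $\overline{\bx}\setminus K_0$.

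I would start by noting that $\partial\shp_\e\cap\bx\neq\emptyset$: otherwise $K_\e\subset\partial\bx$ would have zero Lebesgue measure, contradicting $|K_\e|=\e>0$. Pick $x_\e$ in this free boundary, so $|\nabla\phi_\e(x_\e)|=\sqrt{\Lambda_\e}$ by \eqref{eq:visc_phi_e}. Up to subsequences $x_\e\to x^*\in K_0$ (since $x_\e\in K_\e$ and $K_\e\xrightarrow{H}K_0$), and Theorem \ref{theo_sets_expansion} gives $|\nabla\phi_0(x^*)|=\min_{\partial\bx}|\nabla\phi_0|$. For a small fixed $r>0$, set $z_r:=x^*-r\,\nu(x^*)$, where $\nu(x^*)$ is the outward unit normal to $\partial\bx$ at $x^*$, well defined by the $C^{2,\alpha}$ regularity of the boundary. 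Since $\dist(z_r,K_0)\ge r$, the Hausdorff convergence $K_\e\xrightarrow{H}K_0$ guarantees $B_{r/2}(z_r)\subset\shp_\e$ for all sufficiently small $\e$, whence $\nabla\phi_\e(z_r)\to\nabla\phi_0(z_r)$ by the $C^1_{\mathrm{loc}}$ convergence above. Since both $x_\e$ and $z_r$ lie in $\overline{\shp_\e}$, the uniform Hölder bound gives
\[
\bigl|\sqrt{\Lambda_\e}-|\nabla\phi_\e(z_r)|\bigr|\le |\nabla\phi_\e(x_\e)-\nabla\phi_\e(z_r)|\le M\,|x_\e-z_r|^{1/2}.
\]
Passing to $\e\to 0^+$ (with $|x_\e-z_r|\to r$) and then $r\to 0^+$ (with $\nabla\phi_0(z_r)\to\nabla\phi_0(x^*)$ by continuity) yields $\sqrt{\Lambda_\e}\to\min_{\partial\bx}|\nabla\phi_0|$ along the subsequence; since the limit does not depend on the chosen subsequence, the full sequence converges as desired.

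The main difficulty is that $x_\e$ accumulates precisely on $K_0\subset\partial\bx$, i.e., on the set where $C^1$ convergence of $\phi_\e\to\phi_0$ is \emph{not} available; the role of the bridging interior point $z_r$ is to decouple the two passages to the limit, while the $\e$-independence of the Hölder constant $M$ ensures that the gradient values at $x_\e$ and $z_r$ stay within $Mr^{1/2}$, uniformly in $\e$. Everything else is a routine subsequence-compactness argument.
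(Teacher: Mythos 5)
Your analytical machinery (the uniform $C^{1,\frac{1}{2}}$ H\"older bound for $\nabla\phi_\e$ from \eqref{ineq_uniform_norms_C1}, $C^1_{\mathrm{loc}}$ convergence of $\phi_\e\to\phi_0$ on compact subsets of $\overline\bx\setminus K_0$, and a bridging interior point to decouple the two limits) is precisely the one the paper uses, so the core of the estimate is sound. However, there is a circularity in the final identification of the limit: after showing $\sqrt{\Lambda_\e}\to|\nabla\phi_0(x^*)|$ along a subsequence, you identify this number by invoking the second inclusion of \eqref{eq_min_in_K_mu} in Theorem~\ref{theo_sets_expansion}, namely $K_0\subset\{x\in\partial\bx:|\nabla\phi_0(x)|=\min_{\partial\bx}|\nabla\phi_0|\}$. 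But this inclusion is \emph{not} available before Theorem~\ref{theor_Lae_convergence}: the paper explicitly states that it is obtained only as a byproduct of the very argument you are being asked to supply. What \emph{is} available at this stage, namely Proposition~\ref{prop_lower_estimates_capa} together with \eqref{eq_eigen_Ke_precise}, yields only $\min_{K_0}|\nabla\phi_0|^2=\min_{\partial\bx}|\nabla\phi_0|^2$, which is strictly weaker: it does not exclude points of $K_0$ where $|\nabla\phi_0|$ is larger than the minimum, and a priori your particular accumulation point $x^*$ could be such a point.

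The repair is to run your estimate for \emph{every} $x_0\in K_0$ rather than for a single extracted accumulation point. Fix an arbitrary $x_0\in K_0$ and (via a connectedness argument) produce free-boundary points $x_n\in\overline{\partial\shp_{\e_n}\cap\bx}$ with $x_n\to x_0$; your H\"older/$C^1_{\mathrm{loc}}$ chain of inequalities then shows $\lim_n\sqrt{\Lambda_{\e_n}}=|\nabla\phi_0(x_0)|$. Since the left-hand side does not depend on $x_0$, this forces $|\nabla\phi_0|$ to be constant on $K_0$, and combined with $\min_{K_0}|\nabla\phi_0|^2=\min_{\partial\bx}|\nabla\phi_0|^2$ it identifies the constant as $\min_{\partial\bx}|\nabla\phi_0|$ while simultaneously establishing the second inclusion of \eqref{eq_min_in_K_mu} --- exactly as the paper does by proving Theorems~\ref{theo_sets_expansion} and~\ref{theor_Lae_convergence} jointly.
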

\begin{remark}\label{rmk:altra}
Arguing as in the proof of Theorem \ref{theo_eigen_expansion} and exploiting Theorem  
\ref{theor_Lae_convergence}, one can also show that
\begin{equation*}
 \lim_{\e\to 0^+}\frac{1}{\e}  \int_{\partial \shp_\e}\phi_0 \, d \mc{H}^{N-1}= \min_{x \in \partial \bx}  |\nabla \phi_0(x)|.
\end{equation*}
See Remark \ref{rem:final_rem} ahead for more details.
\end{remark}

The paper is structured as follows: in Section \ref{sec_spec_stab} we define the notion of 
relative capacity and we exploit the abstract quantitative spectral stability theory developed in 
\cite{BS_quant_spec_stab} to relate such notion with the problem we investigate; Section 
\ref{sec_estimates_capa} is the core of this paper, where we provide sharp estimates of the 
relative capacity of the optimal sets $K_\eps$, as $\eps\to0$, and prove Theorem 
\ref{theo_eigen_expansion}; Section \ref{sec_non_deg} is mainly devoted to the uniform regularity 
of the free boundary, allowing to give the proof of Theorems \ref{theorem_reg_boundary}, 
\ref{theor_Lae_convergence} and to complete the one of Theorem \ref{theo_sets_expansion}; finally, 
as we already mentioned, in Appendix \ref{sec_app_prop_min} we collect some results concerning problem 
\eqref{min_prob_eigen_ge}, already known in the literature, that we use throughout the paper.

\section{Quantitative spectral stability}\label{sec_spec_stab}

The aim of this section is to provide  a quantitative estimate of the rate of convergence of the  
eigenvalues $\lambda_\e$ defined in \eqref{min_prob_eigen_ge} to the limit value $\lambda_0=\lambda_1(\bx)$, as $\e\to0^+$, in terms of an appropriate notion of (relative) capacity. To this aim, we shall apply the theory developed in \cite{BS_quant_spec_stab,FNO_Dirich_bound}.

Let us first recall the definition of convergence of sets in the sense of Hausdorff, see for example \cite[Definition 4.4.11]{AT_book_topic_metric_spaces}. Let, for any $E \subset \R^N$ and $x \in \R^N$,
\begin{equation}\label{eq:dist_def}
\dist(x,E):=\inf\{|x-y|: y \in E\}.
\end{equation}

\begin{definition}\label{def:Hausdorff}
We say that a sequence of sets $\{E_n\}_{n \in \mathbb{N}}$ with $E_n \subset \R^N$
converges to a set $E \subset \R^N$  in the sense of Hausdorff if 
\begin{equation}
\dist_H(E,E_n):=\max\left\{\sup_{x \in E}\dist(x,E_n),\sup_{y \in E_n} \dist(y,E)\right\} \to 0^+,  \quad \text{ as } n \to \infty.
\end{equation}
In this case we write $E_n\xrightarrow{H} E$, as $n \to \infty$. 
\end{definition}

Hausdorff convergence does not ensure spectral stability,  i.e. convergence of the eigenvalues, unless additional geometric conditions are imposed on the sets, see for example \cite[Section 2.4]{B_survey}. As shown in \cite{BS_quant_spec_stab,FNO_Dirich_bound}, and as we are going to see in Theorem \ref{theo_spectral_stability}, an additional assumption on the limit set involving the following notion of capacity ensures the desired convergence of the eigenvalues.

\begin{definition}\label{def_capa}
Let  $E\subset\overline{\bx}$ compact. We define the \emph{relative Dirichlet capacity} of $E$ in $\bx$ as
\begin{equation}\label{min_Dirc_capa}
\capa{\bx}{E,\phi_0}:=\inf \left\{\int_{\bx}|\nabla u|^2 \, dx: \, u \in H^1_0(\bx), \,  u -\phi_0 \in H^1_0(\bx\setminus E)\right\},
\end{equation}
where $\phi_0$ is as in \eqref{hp_phi} and $u-\phi_0$ is trivially extended to zero in $E$. In particular, 
\[
\capa{\bx}{\partial\bx,\phi_0}=0.
\]
\end{definition}

To proceed with our analysis  we need some properties of the relative Dirichlet capacity.
Notice first that, by classical variational methods,  the minimization problem \eqref{min_Dirc_capa} is achieved by a capacitary potential $V_{E,\phi_0} \in H^1_0(\bx)$ which weakly solves the problem 
\begin{equation}\label{prob_VE}
	\begin{cases}
		-\Delta V_{E,\phi_0}= 0, &\text{ in } \bx\setminus E,\\
		V_{E,\phi_0}= 0, &\text{ on } \partial \bx,\\
		V_{E,\phi_0}= \phi_0, &\text{ in } E,\\   
	\end{cases}
\end{equation}
in the sense that it coincides with $\phi_0$ in $E$ and 
$V_{E,\phi_0} -\phi_0 \in H^1_0(\bx\setminus E)$, with 
\begin{equation}\label{eq_VE}
	\int_{\bx} \nabla V_{E,\phi_0} \cdot \nabla \psi \, dx =0, \quad \text{ for any } \psi \in H^1_0(\bx\setminus E).
\end{equation}

We need the following variant of \cite[Proposition 3.3]{FNO_Dirich_bound} (see also \cite[Proposition 2.3]{BC_Dirc_capa}), which is well known in case $\capa{\bx}{E,\phi_0}$ is replaced with the  standard   Sobolev capacity.

\begin{proposition}\label{prop_cap_H1}
	Let  $E\subset\overline{\bx}$ be compact. We have that $\capa{\bx}{E,\phi_0}=0$ if and only if 
	$H^1_0(\bx\setminus E)=H^1_0(\bx)$
\end{proposition}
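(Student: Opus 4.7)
The strategy is to reduce both implications to the identity $\phi_0\in H^1_0(\bx\setminus E)$, which is the natural bridge between the capacity and the functional claim.

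The easy direction is immediate: if $H^1_0(\bx\setminus E)=H^1_0(\bx)$ then $\phi_0\in H^1_0(\bx\setminus E)$, so $u=0$ is admissible in \eqref{min_Dirc_capa} (since $u-\phi_0=-\phi_0\in H^1_0(\bx\setminus E)$), giving $\capa{\bx}{E,\phi_0}\le 0$ and hence equal to zero.

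For the non-trivial direction, assume $\capa{\bx}{E,\phi_0}=0$. The capacitary potential $V_{E,\phi_0}\in H^1_0(\bx)$ given by \eqref{prob_VE} attains the infimum, so $\int_{\bx}|\nabla V_{E,\phi_0}|^2\,dx=0$. Since $\bx$ is connected and $V_{E,\phi_0}\in H^1_0(\bx)$, this forces $V_{E,\phi_0}\equiv 0$ in $\bx$. The admissibility condition then gives $-\phi_0=V_{E,\phi_0}-\phi_0\in H^1_0(\bx\setminus E)$, i.e.\ $\phi_0\in H^1_0(\bx\setminus E)$. It remains to upgrade this single-function information into the full equality of the two spaces. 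The inclusion $H^1_0(\bx\setminus E)\subset H^1_0(\bx)$ being trivial, only the reverse requires work.

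The core step is the following approximation scheme. Pick $\phi_n\in C^\infty_c(\bx\setminus E)$ with $\phi_n\to\phi_0$ in $H^1_0(\bx)$; up to a subsequence, $\phi_n\to\phi_0$ a.e.\ and $|\nabla\phi_n|^2$ is uniformly integrable. By density it suffices to show $v\in H^1_0(\bx\setminus E)$ for every $v\in C^\infty_c(\bx)$. Set $K:=\supp v\Subset\bx$; since $\phi_0$ is continuous with $\phi_0>0$ in $\bx$, there is $\delta>0$ with $\phi_0\ge 3\delta$ on $K$. Define the Lipschitz cutoff $f(t):=\min(t^+,2\delta)/(2\delta)$ and set $\psi_n:=f(\phi_n)\in H^1(\bx)$, which vanishes off $\supp\phi_n\Subset\bx\setminus E$. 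Then $\psi_n v\in H^1(\bx)$ has compact support inside $\bx\setminus E$, hence $\psi_n v\in H^1_0(\bx\setminus E)$ (by mollification). The goal is to prove $\psi_n v\to v$ in $H^1_0(\bx)$.

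The crucial point, which I expect to be the main obstacle, is the gradient convergence
\[
\nabla(\psi_n v)=\psi_n\nabla v+v\,\nabla\psi_n\longrightarrow \nabla v\quad\text{in }L^2,
\]
since $\phi_n\to\phi_0$ only in $H^1$, not uniformly. The plan is to exploit convergence in measure together with uniform integrability: on $K$ we have $\phi_0\ge 3\delta$, so the set $A_n:=\{x\in K:\phi_n(x)<2\delta\}$ is contained in $\{|\phi_n-\phi_0|>\delta\}$, which has vanishing Lebesgue measure. On $K\setminus A_n$, $\psi_n\equiv 1$ and $\nabla\psi_n\equiv 0$, so $\psi_n\nabla v\to\nabla v$ in $L^2$ by dominated convergence. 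For the remaining term, since $|\nabla\psi_n|\le(2\delta)^{-1}|\nabla\phi_n|\chi_{A_n}$ and $\{|\nabla\phi_n|^2\}$ is uniformly integrable,
\[
\|v\,\nabla\psi_n\|_{L^2}^2\le \frac{\|v\|_\infty^2}{4\delta^2}\int_{A_n}|\nabla\phi_n|^2\,dx\longrightarrow 0.
\]
The $L^2$ convergence $\psi_n v\to v$ is analogous (and easier). This yields $v\in H^1_0(\bx\setminus E)$ and completes the proof.
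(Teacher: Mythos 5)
Your proof is correct, but it takes a genuinely different route from the paper's. In the nontrivial direction, the paper avoids extracting $\phi_0\in H^1_0(\bx\setminus E)$ as an intermediate step. Instead it takes a minimizing sequence $u_n\to 0$ in $H^1_0(\bx)$ with $u_n-\phi_0\in H^1_0(\bx\setminus E)$, and for a given $\psi\in C^\infty_c(\bx)$ it defines the multiplicative correction $\psi_n:=\frac{(\phi_0-u_n)\psi}{\phi_0}$. This lies in $H^1_0(\bx\setminus E)$ because $\phi_0-u_n$ does and $\psi/\phi_0\in W^{1,\infty}_c(\bx)$, and $\psi-\psi_n=\frac{u_n\psi}{\phi_0}$ is controlled directly by $\|u_n\|_{H^1_0(\bx)}\to 0$; no Vitali-type argument is needed. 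You instead first record $\phi_0\in H^1_0(\bx\setminus E)$ (via the vanishing capacitary potential), approximate it by $\phi_n\in C^\infty_c(\bx\setminus E)$, and build cutoffs by truncation $f(\phi_n)$, paying for the loss of uniformity with a convergence-in-measure plus uniform-integrability argument. Both approaches are sound. The paper's multiplicative device is more self-contained and short-circuits the integrability issue; your truncation scheme is the standard ``barrier'' template from capacity theory and, as a bonus, would carry over verbatim to any nonnegative $w\in H^1_0(\bx\setminus E)$ that is bounded away from zero on compact subsets of $\bx$, not just the eigenfunction $\phi_0$. One tiny cosmetic point: the pointwise bound should read $|\nabla\psi_n|\le(2\delta)^{-1}|\nabla\phi_n|\chi_{\{0<\phi_n<2\delta\}}$, and $\chi_{A_n}$ appears only after multiplying by $v$ (supported in $K$); your displayed estimate for $\|v\,\nabla\psi_n\|_{L^2}^2$ is nevertheless correct.
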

\begin{proof}
	If $H^1_0(\bx\setminus E)=H^1_0(\bx)$ then the capacitary potential $V_{E,\phi_0} \in H^1_0(\bx\setminus E)$. Hence we may choose $\psi=V_{E,\phi_0}$ in \eqref{eq_VE} and conclude that $V_{E,\phi_0} \equiv 0$ in $\bx$, thanks to the Poincar\'e inequality.  It follows that $\capa{\bx}{E,\phi_0}=0$.  
	
	Conversely, let us show that the assumption $\capa{\bx}{E,\phi_0}=0$ implies $H^1_0(\bx)\subset H^1_0(\bx\setminus E)$. To this aim we notice that, by definition, the assumption $\capa{\bx}{E,\phi_0}=0$ provides the existence of a sequence $\{u_n\}_{n \in \mathbb{N}}\subset H^1_0(\bx)$ 
	such that $u_n-\phi_0 \in H^1_0(\bx\setminus E)$ for every $n$ and $\norm{u_n}_{H^1_0(\bx)} \to 0^+$ as $n\to \infty$.
	Let $\psi \in C^\infty_c(\bx)$ and let $\psi_n:=\frac{(\phi_0-u_n)\psi}{\phi_0}$. Notice that $\psi_n$ is well defined, as $\phi_0>0$ in $\bx$. Moreover, there exists $c>0$ such that
	\begin{equation}
		\inf_{x\in\supp(\psi)} \phi_0(x)=c.
	\end{equation}
	Then there exists $c'>0$ such that 
	\begin{equation}\label{phi0-positive}
		\|\psi-\psi_n \|^2_{H^1_0(\bx)} 
		\leq c' \| (\psi-\psi_n)\phi_0 \|^2_{H^1_0(\bx)}.
	\end{equation}
	Moreover we have
	\begin{multline}
		\| (\psi-\psi_n)\phi_0 \|^2_{H^1_0(\bx)}
		= \norm{u_n\psi}^2_{H^1_0(\bx)}\le 2(1+\la^{-1}_0) 
		\int_{\bx} \left( u_n^2|\nabla \psi|^2+|\nabla u_n|^2\psi^2 \right) \, dx \\
		\le 2(1+\la^{-1}_0)\left(\norm{\nabla \psi}_{L^\infty(\bx)}^2
		+\norm{\psi}_{L^\infty(\bx)}^2\right) \norm{u_n}_{H^1_0(\bx)} \to 0^+,
		\quad \text{ as } n \to \infty.
	\end{multline}
	The last estimate, together with \eqref{phi0-positive}, shows that $C^\infty_c(\bx)\subset H^1_0(\bx\setminus E)$. Then, by density, 
	$H^1_0(\bx) \subset H^1_0(\bx\setminus E)$. As the inverse inclusion is trivially satisfied, we deduce that $H^1_0(\bx) = H^1_0(\bx\setminus E)$.
\end{proof}

\begin{proposition}\label{prop_En_limits}
Let $\{E_n\}_{n \in \mb{N}}$ be a sequence of compact subsets of $\overline \bx$ such that
\begin{equation}
E_n \xrightarrow{H} E \quad \text{ and } \quad \capa{\bx}{E,\phi_0}=0.
\end{equation} 
Then 
\begin{align}
& \lim_{n\to\infty}\capa{\bx}{E_n,\phi_0} = 0, \label{limit_capa}\\
& V_{E_n,\phi_0}\to 0 \text{ strongly in } H^1_0(\bx), \quad \text{ as } n \to \infty.\label{limit_potential}
\end{align}
\end{proposition}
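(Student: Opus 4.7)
The plan is to combine Proposition \ref{prop_cap_H1} with an explicit construction of near-optimal competitors for the capacity problem associated with $E_n$. Since $\capa{\bx}{E,\phi_0}=0$, that proposition gives $H^1_0(\bx\setminus E)=H^1_0(\bx)$, and in particular $\phi_0\in H^1_0(\bx\setminus E)$. By density, I would fix a sequence $\{v_k\}\subset C^\infty_c(\bx\setminus E)$ with $v_k\to\phi_0$ strongly in $H^1_0(\bx)$. The idea is then to use $u_{k,n}:=\phi_0-v_k$ as a competitor for $\capa{\bx}{E_n,\phi_0}$: it lies in $H^1_0(\bx)$, and provided $v_k$ vanishes near $E_n$, it satisfies $u_{k,n}-\phi_0=-v_k\in H^1_0(\bx\setminus E_n)$, which is precisely the admissibility condition in Definition \ref{def_capa}.

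To ensure this admissibility, I exploit that $\supp v_k$ is compact and disjoint from $E$, so $d_k:=\dist(\supp v_k, E)>0$. The Hausdorff convergence $E_n\xrightarrow{H}E$ then implies that for $n$ large (say $n\geq n_k$) we have $\dist(\supp v_k,E_n)\geq d_k/2>0$: indeed, once $\dist_H(E_n,E)<d_k/2$, every $y\in E_n$ lies within $d_k/2$ of some $z\in E$, while every $x\in\supp v_k$ satisfies $|x-z|\geq d_k$, so $|x-y|\geq d_k/2$. For $n\geq n_k$ this yields $v_k\in C^\infty_c(\bx\setminus E_n)\subset H^1_0(\bx\setminus E_n)$, hence $u_{k,n}$ is admissible and
\[
\capa{\bx}{E_n,\phi_0}\leq\int_\bx|\nabla(\phi_0-v_k)|^2\,dx.
\]
A standard diagonal argument (given $\delta>0$, pick $k$ so that the right-hand side is $<\delta$, then take $n\geq n_k$) yields $\limsup_n \capa{\bx}{E_n,\phi_0}\leq\delta$ for every $\delta>0$, which is exactly \eqref{limit_capa}.

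For \eqref{limit_potential}, I would simply observe that $V_{E_n,\phi_0}$ realises the infimum in \eqref{min_Dirc_capa}, so $\int_\bx|\nabla V_{E_n,\phi_0}|^2\,dx=\capa{\bx}{E_n,\phi_0}\to 0$, and the strong $H^1_0(\bx)$-convergence $V_{E_n,\phi_0}\to 0$ follows at once from the Poincar\'e inequality. The main delicate point is the matching between the two approximations: the $H^1_0$-density of smooth compactly supported functions on $\bx\setminus E$ interacts with the Hausdorff convergence of $E_n$ precisely through the compactness of $\supp v_k$, which is what transfers the vanishing of $\capa{\bx}{E,\phi_0}$ to the nearby sets $E_n$. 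No cutoff construction is needed because $v_k$ is already compactly supported away from $E$, hence eventually away from $E_n$.
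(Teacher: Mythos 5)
Your proof is correct and takes a genuinely different route from the paper's. The paper argues by compactness: it extracts a weak $H^1_0(\bx)$-limit $V$ of the capacitary potentials $V_{E_n,\phi_0}$ (using the bound $\norm{V_n}_{H^1_0(\bx)}\le\norm{\phi_0}_{H^1_0(\bx)}$), identifies $V=V_{E,\phi_0}=0$ via Proposition \ref{prop_cap_H1}, then recovers norm convergence by testing the equations for $V_n$ and $V$ with $\phi_0$ and finishes with the Urysohn subsequence principle. You instead invoke Proposition \ref{prop_cap_H1} upstream, to get $\phi_0\in H^1_0(\bx\setminus E)$, and then build explicit near-optimal competitors $\phi_0-v_k$ for the capacity problem on $E_n$, transferring them from $E$ to $E_n$ via the positive distance between the compact set $\supp v_k$ and $E$ together with the Hausdorff convergence. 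Your argument is more elementary — no weak compactness, no extraction of subsequences, no Urysohn step — and makes the geometric role of Hausdorff convergence very transparent. It produces the bound directly rather than by identifying a limit; the paper's compactness approach is the one that generalizes more readily to situations where explicit competitors are harder to exhibit (as used elsewhere in the source \cite{FNO_Dirich_bound}), but for this particular statement yours is arguably cleaner. Two small remarks: the final step is not really a diagonal argument but a straightforward $\limsup$ estimate over a two-parameter family; and, with the paper's convention that $H^1_0(\bx)$ carries the Dirichlet norm, you do not even need Poincaré to get \eqref{limit_potential}, since $\norm{V_{E_n,\phi_0}}^2_{H^1_0(\bx)}=\capa{\bx}{E_n,\phi_0}$ by definition.
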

\begin{proof}
The proof follows that of \cite[Proposition 3.8]{FNO_Dirich_bound}. To simplify the notation 
let $V_n:=V_{E_n,\phi_0}$. For any $n \in \mathbb{N}$ we have that $V_n\in H^1_0(\bx)$, $V_n-\phi_0\in H^1_0(\bx\setminus  E_n)$ and
\begin{equation}\label{eq:V_n}
\int_{\bx} \nabla V_n \cdot \nabla \psi \,dx =0  \quad \text{ for any } \psi \in H^1_0(\bx \setminus E_n) .
\end{equation}
As $\phi_0$ is an admissible test function in \eqref{min_Dirc_capa}, we infer that 
$\|V_n\|_{H^1_0(\bx)} \leq \|\phi_0\|_{H^1_0(\bx)}$
for any $n \in \mathbb{N}$.
Hence $\{V_n\}_{n\in \N}$ is bounded in $H^1_0(\bx)$ and there exist $V \in H^1_0(\bx)$ and a subsequence, still denoted by $\{V_{n}\}_{n \in \mb{N}}$, such that $V_n\rightharpoonup V$ weakly in $H^1_0(\bx)$ as $n\to\infty$, that is
\begin{equation}\label{eq:V_n_k}
\int_\bx \nabla V_n \cdot \nabla \psi \, dx \to 
\int_\bx \nabla V\cdot\nabla \psi \,dx
\quad \text{ for any } \psi \in H^1_0(\bx),
\end{equation}
as $n\to\infty$.  Let $\psi \in C^\infty_c(\bx\setminus E)$. 
As $E_n \xrightarrow{H} E$,  it eventually holds $\psi \in C^\infty_c(\bx\setminus E_n)$, so that both \eqref{eq:V_n} and \eqref{eq:V_n_k} hold, thus leading to 
\begin{equation}\label{eq:V=0}
\int_{\bx} \nabla V \cdot \nabla \psi \,dx =0    \quad \text{ for any } \psi \in C^\infty_c(\bx\setminus E),
\end{equation}
and hence, by density, for every $\psi \in H^1_0(\bx\setminus E)$.
Then, Proposition  \ref{prop_cap_H1} ensures that \eqref{eq:V=0} holds for every $\psi \in H^1_0(\bx)$, so that $V=V_{E,\phi_0}=0$. Furthermore, taking $\psi=V-\phi_0$ in \eqref{eq:V=0} and $\psi=V_n-\phi_0$ in \eqref{eq:V_n_k}, we obtain
\begin{multline*}
0=\capa{\bx}{K_0,\phi_0}=\int_{\bx} |\nabla V|^2 \, dx=\int_{\bx} \nabla V \cdot\nabla \phi_0\, dx\\=\lim_{n\to \infty }\int_{\bx} \nabla V_n\cdot \nabla \phi_0 \, dx 
=\lim_{n \to \infty }\capa{\bx}{E_n,\phi_0}.
\end{multline*}
Thanks to  the Urysohn Subsequence Principle we conclude that \eqref{limit_capa} and \eqref{limit_potential} hold.
\end{proof}

In view of Propositions \ref{prop_cap_H1} and \ref{prop_En_limits}, together with \cite[Theorem  3.8]{BS_quant_spec_stab}, we are now in a  position to apply \cite[Theorem 3.1,  Proposition 3.4]{BS_quant_spec_stab}, thus obtaining the following result.
\begin{theorem}[{\cite[Theorem 3.1, Proposition 3.4]{BS_quant_spec_stab}}]\label{theo_spectral_stability}
Let $\bx \subset \R^N$, $N \ge 2$, be a bounded Lipschitz domain. Let $\phi_0$ be as in \eqref{hp_phi}.
Let $\{E_n\}_{n \in \mb{N}}$ be a sequence of compact subsets of $\overline \bx$ such that
\begin{equation}\label{hp_En_E}
   E_n \xrightarrow{H} E \quad \text{ and } \quad \capa{\bx}{E,\phi_0}=0,
\end{equation}
with $E\subset \overline \bx$ compact.
Then $\lim_{n\to\infty}\capa{\bx}{E_n,\phi_0}=0$ and, denoting by $\{\lambda_h(\bx)\}_{h\geq1}$, the sequence of eigenvalues of \eqref{eq_phi},  we have
\begin{equation}\label{eq_eigenvalues_asym}
\la_{h}(\bx\setminus E_n)=\la_{h}(\bx)+\capa{\bx}{E_n,\phi_0}+o(\capa{\bx}{E_n,\phi_0}), \quad  \text{ as } n \to \infty,
\end{equation}
for every $h\geq1$.
Furthermore, denoting by $\phi_{E_n}$ the positive, $L^2(\bx)$-normalized eigenfunction associated to $\la_1(\bx\setminus E_n)$, it holds
\begin{align}
&\norm{\nabla \phi_{E_n}-\nabla \phi_0}^2_{L^2(\bx\setminus E_n)}=\capa{\bx}{E_n,\phi_0}+o(\capa{\bx}{E_n,\phi_0}), \quad  \text{ as } n \to \infty, \label{conv_eigenfunctions_H1_norm}\\
&\norm{\phi_{E_n}-\phi_0}_{L^2(\bx\setminus E_n)}^2=o(\capa{\bx}{E_n,\phi_0})), \quad  \text{ as } n \to \infty. \label{conv_eigenfunctions_L2_norm}
\end{align}
\end{theorem}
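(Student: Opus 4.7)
The plan is to reduce the statement to a direct invocation of the abstract quantitative spectral stability theory developed in \cite{BS_quant_spec_stab}. The notion of relative Dirichlet capacity introduced in Definition \ref{def_capa} is designed precisely to fit that abstract framework: given a sequence of Hausdorff-converging compact sets $E_n\to E$ with $\capa{\bx}{E,\phi_0}=0$, the machinery in \cite{BS_quant_spec_stab} produces sharp first-order asymptotics for all Dirichlet eigenvalues and for the associated eigenfunctions, with explicit remainders expressed in terms of the capacity of $E_n$.

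The plan has three steps. First, I would check that the prerequisite hypotheses of \cite[Theorem 3.1 and Proposition 3.4]{BS_quant_spec_stab} hold in our setting. Proposition \ref{prop_cap_H1} plays the role of a characterization of negligible sets: it ensures that the assumption $\capa{\bx}{E,\phi_0}=0$ is equivalent to the identity $H^1_0(\bx\setminus E)=H^1_0(\bx)$, so that the limit eigenvalue problem is genuinely the Dirichlet problem on the full box $\bx$. Proposition \ref{prop_En_limits} then supplies the two convergences required as input by the abstract theory, namely $\capa{\bx}{E_n,\phi_0}\to 0$ and $V_{E_n,\phi_0}\to 0$ strongly in $H^1_0(\bx)$. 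Finally, \cite[Theorem 3.8]{BS_quant_spec_stab} furnishes the remaining abstract approximation condition, which in this framework is the Mosco-type convergence of the associated energy forms.

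Once the hypotheses are verified, the conclusions follow immediately: the asymptotic expansion \eqref{eq_eigenvalues_asym} for every eigenvalue is exactly the content of \cite[Theorem 3.1]{BS_quant_spec_stab}, while the sharp rates \eqref{conv_eigenfunctions_H1_norm} and \eqref{conv_eigenfunctions_L2_norm} for the corresponding eigenfunctions are provided by \cite[Proposition 3.4]{BS_quant_spec_stab}, keeping in mind that $L^2$-normalization is preserved in the limit because $\|\phi_{E_n}\|_{L^2(\bx\setminus E_n)}=1$ and $|E_n|\to 0$.

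I expect no genuine obstacle at this stage, since the bulk of the work has been carried out in the preceding propositions and by the authors of \cite{BS_quant_spec_stab}. The only conceptual point that must be verified is that the specific definition \eqref{min_Dirc_capa}, in which the admissible test functions are \emph{anchored} to $\phi_0$ inside $E$ rather than to the constant $1$ (as in the classical Sobolev capacity), is indeed the correct quantity to plug into the abstract theory. This is precisely why $\phi_0$ is singled out in \eqref{min_Dirc_capa}, and why Proposition \ref{prop_cap_H1} has been phrased in terms of the full space $H^1_0(\bx\setminus E)$: these are exactly the ingredients needed to identify the capacitary potential $V_{E_n,\phi_0}$ with the orthogonal projection of $\phi_0$ onto the closed subspace $H^1_0(\bx\setminus E_n)^\perp$ of $H^1_0(\bx)$, which is the object driving the perturbation argument in \cite{BS_quant_spec_stab}.
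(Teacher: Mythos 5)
Your proposal is correct and matches the paper's own treatment: Theorem \ref{theo_spectral_stability} is not proved from scratch but is obtained by citing \cite[Theorem 3.1, Proposition 3.4]{BS_quant_spec_stab}, after verifying its hypotheses via Proposition \ref{prop_cap_H1}, Proposition \ref{prop_En_limits}, and \cite[Theorem 3.8]{BS_quant_spec_stab}, exactly as you describe. Your observation that $V_{E_n,\phi_0}$ is the Dirichlet-orthogonal projection of $\phi_0$ onto $H^1_0(\bx\setminus E_n)^\perp$ is also the right way to see why the $\phi_0$-anchored relative capacity is the correct quantity to feed into the abstract theory.
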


Turning now to the main minimization problem \eqref{min_prob_eigen_=} we are interested in, 
let $\{K_{\e} \}_{\e \in (0,|\bx|)}$ be a family of optimal obstacles, i.e. a family of sets satisfying \eqref{K_e_def}.
It follows (see e.g. \cite[Theorem 4.4.15 and Remark 4.4.16]{AT_book_topic_metric_spaces}) that there exist a compact,  non-empty set $K_0 \subset \overline {\bx}$ and a sequence $\{K_{\e_n} \}_{n \in \mathbb{N}}$ such that
\begin{equation}\label{H-conv}
K_{\e_n} \xrightarrow{H} K_0
\quad \text{and}\quad \e_n \to0,
\qquad \text{as } n\to\infty.
\end{equation}
In general, the set $K_0$ depends on the choice of the subsequence $\{K_{\e_n} \}$.
We shall prove that any limit set $K_0$ has zero relative Dirichlet capacity.
As a consequence,  Theorem \ref{theo_spectral_stability} applies, thus providing the convergence up to subsequences of $\lambda_\e$ to $\lambda_0$. More precisely, in the present section we prove the following result.
\begin{theorem}\label{theo_la1_Ke_easymptotic}
Let $\{K_{\e} \}_{\e \in (0,|\bx|)}$ be a family of optimal sets as in \eqref{K_e_def} and let $\phi_{\e}$ be as in \eqref{eq_phi_e}, \eqref{hp_phi_e}.
There exist a sequence $\{K_{\e_n} \}_{n \in \mathbb{N}}$ and a compact,  non-empty set $K_0 \subset \overline {\bx}$ such that $K_{\e_n} \xrightarrow{H} K_0$ and $\e_n\to0$ as $n\to\infty$. Moreover, for any such sequence, the following holds
\begin{align}
&\lim_{n \to \infty}\capa{\bx}{K_{\e_n},\phi_0}=\capa{\bx}{K_0,\phi_0}=0, 
\qquad \text{ and  } \quad K_0\subset\partial \bx; \label{eq_capa_meas_K}\\
&  \lambda_{\e_n} 
=\lambda_0 
+\capa{\bx}{K_{\e_n},\phi_0}+o(\capa{\bx}{K_{\e_n},\phi_0}),
\end{align}
Furthermore,  as $n \to \infty$, it holds
\begin{align}
	&\norm{\nabla \phi_{\e_n}-\nabla \phi_0}^2_{L^2(\bx\setminus K_{\e_n})}=\capa{\bx}{K_{\e_n},\phi_0}+o(\capa{\bx}{K_{\e_n},\phi_0}), \label{conv_eigenfunctions_H1_norm_K}\\
	&\norm{\phi_{\e_n}-\phi_0}_{L^2(\bx\setminus K_{\e_n})}^2=o(\capa{\bx}{K_{\e_n},\phi_0})). \label{conv_eigenfunctions_L2_norm_K}
\end{align}
\end{theorem}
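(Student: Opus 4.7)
The plan is to carry out the argument in four stages: (i) extract a Hausdorff-convergent subsequence of optimal obstacles; (ii) prove $\lambda_\e\to\lambda_0$ by comparing with test obstacles placed at the boundary; (iii) deduce $\capa{\bx}{K_0,\phi_0}=0$ and feed this into Theorem \ref{theo_spectral_stability} to obtain all the asymptotic expansions; and (iv) separately establish the set-theoretic inclusion $K_0\subset\partial\bx$.

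For (i), Hausdorff compactness of compact subsets of $\overline\bx$ yields a subsequence $K_{\e_n}\xrightarrow{H}K_0$ with $\e_n\to 0$, where $K_0$ is compact and nonempty (one extracts a limit of any $x_n\in K_{\e_n}$ in the compact set $\overline\bx$). For (ii), fix $x_0\in\partial\bx$ and set $B_\e:=\overline{B_{r(\e)}(x_0)}\cap\overline\bx$, with $r(\e)$ chosen so that $|B_\e|=\e$; then $B_\e\xrightarrow{H}\{x_0\}$. Since a single point has zero classical Sobolev capacity in $\R^N$ for $N\geq 2$, Proposition \ref{prop_cap_H1} gives $\capa{\bx}{\{x_0\},\phi_0}=0$. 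Theorem \ref{theo_spectral_stability} applied to $\{B_\e\}$ therefore yields $\lambda_1(\bx\setminus B_\e)\to\lambda_0$, and by optimality $\lambda_\e\leq\lambda_1(\bx\setminus B_\e)\to\lambda_0$; the matching lower bound $\lambda_\e\geq\lambda_0$ is immediate from domain monotonicity of the first Dirichlet eigenvalue.

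For (iii), from $\|\phi_\e\|_{H^1_0(\bx)}^2=\lambda_\e\to\lambda_0$ the eigenfunctions are uniformly $H^1$-bounded; weak compactness, strong $L^2$ convergence, and the variational characterization of $\lambda_0$ identify any weak limit as $\phi_0$, and norm convergence upgrades this to strong $H^1_0(\bx)$ convergence $\phi_{\e_n}\to\phi_0$. To pass from this to $\capa{\bx}{K_0,\phi_0}=0$, I would use that each $\phi_{\e_n}\in H^1_0(\bx\setminus K_{\e_n})$ and exploit Hausdorff convergence (a Mosco-type passage to the limit, facilitated here by $|K_{\e_n}|=\e_n\to 0$) to conclude $\phi_0\in H^1_0(\bx\setminus K_0)$. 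Since $\phi_0>0$ pointwise in $\bx$, this forces $K_0\cap\bx$ to have zero Sobolev capacity, so $H^1_0(\bx\setminus K_0)=H^1_0(\bx)$ and Proposition \ref{prop_cap_H1} gives $\capa{\bx}{K_0,\phi_0}=0$. With this in hand, Theorem \ref{theo_spectral_stability} applied to $\{K_{\e_n}\}$ produces simultaneously $\capa{\bx}{K_{\e_n},\phi_0}\to 0$, the capacity-based expansion of $\lambda_{\e_n}$, and the eigenfunction rates \eqref{conv_eigenfunctions_H1_norm_K}--\eqref{conv_eigenfunctions_L2_norm_K}.

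The main obstacle is (iv), the set-theoretic inclusion $K_0\subset\partial\bx$, which is strictly stronger than the capacity statement in (iii): an interior point has zero Sobolev capacity yet lies in $\bx$, so zero relative capacity alone only says that $K_0\cap\bx$ is Sobolev-capacity negligible. To obtain the actual inclusion I would invoke uniform $L^\infty$ bounds on $\phi_\e$ (from the $L^2$ normalization and standard Moser/De Giorgi estimates applied to $-\Delta\phi_\e=\lambda_\e\phi_\e$ on $\shp_\e$) together with interior elliptic regularity away from the obstacles: any compact $K\subset\bx\setminus K_0$ eventually lies in $\shp_{\e_n}$ by Hausdorff convergence, so $\phi_{\e_n}\to\phi_0$ uniformly on $K$. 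If some $x_0\in K_0\cap\bx$ existed, Hausdorff convergence would produce $x_n\in K_{\e_n}$ with $x_n\to x_0$ and $\phi_{\e_n}(x_n)=0$, contradicting a Harnack-type positive lower bound obtained on a small ball around $x_0$ whose boundary lies in $\bx\setminus K_0$ and on which the uniform convergence yields $\phi_{\e_n}\geq c>0$ for large $n$.
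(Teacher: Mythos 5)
Your stages (i)--(ii) are fine and essentially match the paper (the paper places the test ball at an interior point rather than on $\partial\bx$, but both choices have vanishing relative capacity and yield $\lambda_\e\to\lambda_0$; domain monotonicity handles the lower bound). The trouble is in stages (iii) and (iv), and it is an ideas-level gap, not a matter of routine details. In (iii) you assert that Hausdorff convergence of $K_{\e_n}$ to $K_0$, ``facilitated'' by $|K_{\e_n}|\to 0$, lets you pass from $\phi_{\e_n}\in H^1_0(\bx\setminus K_{\e_n})$ to $\phi_0\in H^1_0(\bx\setminus K_0)$. This ``$M_2$'' direction of Mosco convergence is precisely what Hausdorff convergence of obstacles does \emph{not} give in general; the measure going to zero does not help either, since zero-measure compacta can have large capacity. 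This is the very phenomenon that makes the capacity hypothesis in Theorem \ref{theo_spectral_stability} nontrivial, so you cannot take it for granted here. In (iv) the Harnack-type argument also fails as stated: $\phi_{\e_n}$ solves the equation only on $\shp_{\e_n}$, so inside a ball around $x_0\in K_0\cap\bx$ the function is not a positive solution (it has a free boundary where it vanishes), and Harnack does not propagate positivity from $\partial B_\rho(x_0)$ into the ball. Moreover, you would first need a sphere around $x_0$ contained in $\bx\setminus K_0$, which is not guaranteed: Hausdorff limits of small-measure compacta can still have nonempty interior.

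What is missing is the uniform (in $\e$) Lipschitz estimate for the optimal eigenfunctions, Proposition \ref{prop_equi_lip} in the appendix (coming from the free-boundary regularity literature). Combined with the strong $H^1_0$ convergence $\phi_{\e_n}\to\phi_0$, the equi-Lipschitz bound upgrades the convergence to $C^{0,\alpha}(\overline\bx)$ by Arzel\`a--Ascoli; once you have uniform convergence, $K_0\subset\partial\bx$ follows in one line (Lemma \ref{lem_cap_measure}): for any $x_0\in\bx$, eventually $\phi_{\e_n}\geq\tfrac12\phi_0(x_0)>0$ on a fixed small ball $B_r(x_0)\subset\bx$, so $B_r(x_0)\subset\shp_{\e_n}$ eventually and hence $x_0\notin K_0$. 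The conclusion $\capa{\bx}{K_0,\phi_0}=0$ is then immediate, since $K_0\subset\partial\bx$ forces $\bx\setminus K_0=\bx$ and Proposition \ref{prop_cap_H1} applies. In short, you should do (iv) first -- using the equi-Lipschitz estimate rather than Harnack -- and then (iii) comes for free, instead of trying to establish the capacity statement by a Mosco argument that is not available.
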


In the remaining part of this section, we shall prove this result. 
Let us first show that the family $\{\lambda_\e\}$ converges to $\lambda_0$ as $\e\to0^+$.

\begin{lemma}\label{lem_imit_eigen_Ke}
We have that
\begin{equation}\label{limit_eigen_Ke}
\lim_{\e \to 0^+} \la_\e=\la_0.
\end{equation}
\end{lemma}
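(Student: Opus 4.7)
The plan is to sandwich $\la_\e$ between $\la_0$ and a trial quantity that already converges to $\la_0$ by the spectral stability machinery developed above. Thus I would prove the two inequalities
$\liminf_{\e\to 0^+}\la_\e \ge \la_0$ and $\limsup_{\e\to 0^+}\la_\e \le \la_0$ separately, the first by pure domain monotonicity and the second by exhibiting one convenient admissible family of obstacles.

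For the lower bound, any admissible $\shp\subset\bx$ satisfies $H^1_0(\shp)\subset H^1_0(\bx)$, so the Rayleigh-quotient formula \eqref{eq_R_quot_approx} gives $\la_1(\shp)\ge \la_1(\bx)=\la_0$. Taking the infimum over admissible $\shp$ in \eqref{min_prob_eigen_=} yields $\la_\e\ge\la_0$ for every $\e\in(0,|\bx|)$, hence in particular $\liminf_{\e\to 0^+}\la_\e\ge\la_0$.

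For the upper bound, I would fix an interior point $x_0\in\bx$ and set
\[
r_\e:=\left(\frac{\e}{|B_1(0)|}\right)^{1/N},\qquad \tilde K_\e:=\overline{B_{r_\e}(x_0)},
\]
so that for $\e$ small $\tilde K_\e$ is a compact subset of $\bx$ with $|\tilde K_\e|=\e$, and $\tilde K_\e\xrightarrow{H}\{x_0\}$ as $\e\to 0^+$. Since $N\ge 2$, a single point has vanishing $H^1$-capacity in $\R^N$, hence $H^1_0(\bx\setminus\{x_0\})=H^1_0(\bx)$; Proposition~\ref{prop_cap_H1} then gives $\capa{\bx}{\{x_0\},\phi_0}=0$. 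Proposition~\ref{prop_En_limits} applied to the family $\{\tilde K_\e\}$ yields $\capa{\bx}{\tilde K_\e,\phi_0}\to 0$, and Theorem~\ref{theo_spectral_stability} (with $h=1$) then produces
\[
\la_1(\bx\setminus\tilde K_\e)=\la_0+\capa{\bx}{\tilde K_\e,\phi_0}+o(\capa{\bx}{\tilde K_\e,\phi_0})\xrightarrow[\e\to 0^+]{}\la_0.
\]
Since $\tilde K_\e$ is admissible for \eqref{min_prob_eigen_ge}, the definition of $\la_\e$ forces $\la_\e\le\la_1(\bx\setminus\tilde K_\e)$, so passing to the limit gives $\limsup_{\e\to 0^+}\la_\e\le\la_0$, and combined with the lower bound this concludes the proof.

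There is no genuine obstacle in the argument, since all the real work — relating $H^1_0$-removability to the vanishing of the relative capacity, and translating capacitary smallness into spectral smallness — has already been carried out in Propositions~\ref{prop_cap_H1} and~\ref{prop_En_limits} and in Theorem~\ref{theo_spectral_stability}. The only point that requires a moment of care is picking a test family of obstacles whose Hausdorff limit has zero relative capacity; for this reason I prefer shrinking balls around an \emph{interior} point, which keeps the geometry trivial and lets me invoke the standard fact that points are $H^1$-removable in dimension $N\ge 2$.
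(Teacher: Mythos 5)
Your proof is correct and follows essentially the same route as the paper: the lower bound by domain monotonicity, and the upper bound by testing with shrinking closed balls $\overline{B_{r_\e}(x_0)}$ around an interior point, whose Hausdorff limit $\{x_0\}$ has zero relative capacity, and then invoking Theorem~\ref{theo_spectral_stability}. The only minor difference is that you spell out why $\capa{\bx}{\{x_0\},\phi_0}=0$ (via the $H^1$-removability of points in $N\ge 2$ and Proposition~\ref{prop_cap_H1}), a step the paper merely asserts.
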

\begin{proof}
Let $x_0 \in \bx$ and $r_\e=(\e/|B_1|)^{\frac{1}{N}}$, with $\e$ sufficiently small that $B_{r_\e}(x_0)\subset \bx$. Then, since $\capa{\bx}{\{x_0\}}=0$ and $ B_{r_\e}(x_0) \xrightarrow{H} \{x_0\}$ as $\e \to 0^+$,  by  Theorem \ref{theo_spectral_stability} we have
\begin{equation}
\la_1(\bx\setminus \overline{B_{r_\e}(x_0)})=\la_0+\capa{\bx}{\overline{B_{r_\e}(x_0)},\phi_0}+o(\capa{\bx}{\overline{B_{r_\e}(x_0)},\phi_0}), \quad  \text{ as } \e \to 0^+.
\end{equation}
In particular $\la_1(\bx\setminus \overline{B_{r_\e}(x_0)}) \to \la_0$ as $\e \to 0^+$.

As $|B_{r_\e}(x_0)|=\e$, the set $\bx\setminus\overline{B_{r_\e}(x_0)}$ is admissible for the minimization problem \eqref{min_prob_eigen_ge}, hence
\begin{equation}
\limsup_{\e \to 0^+}   \la_\e \leq
\lim_{\e \to 0^+}  \la_1(\bx\setminus \overline{B_{r_\e}(x_0)})
=\la_0.
\end{equation}
On the other hand, by inclusion, 
\begin{equation}\label{ineq_eigen}
 \la_1(\bx\setminus E) \ge  \la_0 \quad \text{ for any compact set } E  \subset \overline{\bx},
\end{equation}
so that, in particular, $\la_\e \ge  \la_0$ for any $\e\in (0,|\bx|)$.
\end{proof}

As a matter of fact, as a consequence of Lemma \ref{lem_imit_eigen_Ke} one can see that the 
eigenfunctions $\phi_\eps$ are equi-Lipschitz continuous for $\eps$ small, see Proposition 
\ref{prop_equi_lip} ahead.

Quite naturally, as the sets $K_\e$ Haudorff converge to $K_0$, and, by the previous lemma, 
$\lambda_\e=\lambda_1(\shp_\e)\to\lambda_0$, letting 
\begin{equation}\label{eq:Dzero_def}
\shp_0:=\bx\setminus K_0,  
\end{equation}
we also have that $\la_1(\shp_0)=\la_0$, as we show in the following lemma.

\begin{lemma}\label{lem_eq_eigen_K}
Let $K_{\e_n}$ and $K_0$ be as in \eqref{H-conv} and $\phi_{\e_n}$ be as in \eqref{eq_phi_e}, \eqref{hp_phi_e}. 
Finally, let $\shp_0$ be as in \eqref{eq:Dzero_def}.
Then
\begin{equation}
\la_1(\shp_0)=\la_0 \qquad\text{and}\qquad
\phi_{\e_n}\to\phi_0
\text{ in } H^1_0(\bx)\cap C^{0,\alpha}(\overline{\bx}) 
\end{equation}
for every $0<\alpha<1$.
\end{lemma}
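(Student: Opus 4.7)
My plan is to combine weak $H^1_0$--compactness with the uniform bounds coming from the equi-Lipschitz estimate (Proposition~\ref{prop_equi_lip}) to extract a limit $\phi_*$ of $\phi_{\e_n}$, identify $\phi_*=\phi_0$ via a Rayleigh quotient argument, and then use the Hausdorff convergence to force $K_0\subset\partial\bx$, from which $\shp_0=\bx$ and $\lambda_1(\shp_0)=\lambda_0$ follow.

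First, since $\int_\bx|\nabla\phi_{\e_n}|^2\,dx=\lambda_{\e_n}$ and Lemma~\ref{lem_imit_eigen_Ke} gives $\lambda_{\e_n}\to\lambda_0$, the sequence $\{\phi_{\e_n}\}$ is bounded in $H^1_0(\bx)$. By Proposition~\ref{prop_equi_lip} it is also uniformly bounded and equicontinuous on $\overline{\bx}$, so Arzelà--Ascoli yields, along a subsequence, a uniform limit $\phi_*\in C^{0,1}(\overline{\bx})$; the elementary interpolation $[f]_{C^{0,\alpha}}\le (2\|f\|_\infty)^{1-\alpha}[f]_{C^{0,1}}^\alpha$ then promotes the convergence to $C^{0,\alpha}(\overline{\bx})$ for every $\alpha\in(0,1)$. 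Simultaneously, up to a further subsequence, $\phi_{\e_n}\rightharpoonup\phi_*$ in $H^1_0(\bx)$ and strongly in $L^2(\bx)$, so $\|\phi_*\|_{L^2(\bx)}=1$ and $\phi_*\ge 0$.

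Next, by lower semicontinuity of the Dirichlet energy and the Rayleigh characterization of $\lambda_0$,
\[
\lambda_0\le\int_\bx|\nabla\phi_*|^2\,dx\le\liminf_{n\to\infty}\int_\bx|\nabla\phi_{\e_n}|^2\,dx=\lim_{n\to\infty}\lambda_{\e_n}=\lambda_0,
\]
so $\phi_*$ realizes the minimum. By simplicity of $\lambda_0$ and non-negativity of $\phi_*$, one gets $\phi_*=\phi_0$; then the convergence $\|\nabla\phi_{\e_n}\|_{L^2}^2\to\|\nabla\phi_0\|_{L^2}^2$ combined with weak convergence upgrades $\phi_{\e_n}\rightharpoonup\phi_0$ to strong convergence in $H^1_0(\bx)$.

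Finally, to locate $K_0$, I would exploit the fact that the Lipschitz representative $\phi_{\e_n}$ vanishes identically on $K_{\e_n}$ (since $\shp_{\e_n}$ is its positivity set). For each $x\in K_0$, Hausdorff convergence supplies $x_n\in K_{\e_n}$ with $x_n\to x$; then uniform convergence together with continuity of $\phi_0$ give $\phi_0(x)=\lim_{n}\phi_{\e_n}(x_n)=0$. Since $\phi_0>0$ inside $\bx$ by \eqref{hp_phi}, this forces $K_0\subset\partial\bx$, hence $\shp_0=\bx\setminus K_0=\bx$ and $\lambda_1(\shp_0)=\lambda_0$. Because the limit $\phi_0$ is uniquely identified, the Urysohn subsequence principle promotes the convergence to the whole family $\{\phi_{\e_n}\}$. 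The main point requiring care is the availability of the equi-Lipschitz bound: without it, deducing ``$\phi_0=0$ on $K_0$'' from ``$\phi_{\e_n}=0$ on $K_{\e_n}$'' would require a capacity-theoretic or Mosco-convergence argument, but with Proposition~\ref{prop_equi_lip} in hand the step reduces to uniform convergence.
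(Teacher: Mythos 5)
Your proof is correct, but it takes a genuinely different route to identify the limit eigenfunction and to establish $\lambda_1(\shp_0)=\lambda_0$. The paper first extracts the weak $H^1_0$-limit $\phi$ and then passes to the limit in the weak formulation \eqref{eq_phi_e} with test functions $\psi\in C^\infty_c(\shp_0)$ (admissible in $\shp_{\e_n}$ for large $n$ by Hausdorff convergence), concluding that $\phi$ is an eigenfunction on $\shp_0$ with eigenvalue $\lambda_0$, hence $\lambda_1(\shp_0)=\lambda_0$; the strong $H^1_0$ and Hölder convergence then follows from convergence of the norms and Proposition~\ref{prop_equi_lip}. You instead identify $\phi_*=\phi_0$ purely variationally inside $\bx$ — via weak lower semicontinuity of the Dirichlet energy, the Rayleigh characterization of $\lambda_0$, and simplicity plus the sign constraint — without ever invoking the Hausdorff convergence at this stage; you then use the already-established uniform convergence, the vanishing of $\phi_{\e_n}$ on $K_{\e_n}$, and the strict positivity of $\phi_0$ in $\bx$ to deduce $K_0\subset\partial\bx$, so that $\shp_0=\bx$ and $\lambda_1(\shp_0)=\lambda_0$ is immediate. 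This reordering is perfectly valid; it has the small advantage of needing no limit passage in the PDE and of subsuming the localization $K_0\subset\partial\bx$, which the paper postpones to the separate Lemma~\ref{lem_cap_measure} (whose proof is essentially the same uniform-convergence observation you make at the end). Conversely, the paper's argument is slightly more modular in that its version of the lemma does not presuppose, nor derive in passing, where $K_0$ sits.
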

\begin{proof}
Thanks to Lemma \ref{lem_imit_eigen_Ke} we have 
\[
\|\phi_{\e_n}\|_{H^1_0(\bx)}^2 = \lambda_{\e_n} \to \lambda_0 \quad\text{as } n\to\infty.
\]
Then, up to passing to a subsequence, there exists a function $\phi \in H^1_0(\bx)$ such that $\phi_{\e_n} \rightharpoonup \phi$ weakly in $H^1_0(\bx)$.
Thanks to the Rellich-Kondrachov Theorem, $\norm{\phi}_{L^2(\bx)}=1$ and thus $\phi \ge 0$ in $\bx$ is not trivial.

Let $\psi \in C^\infty_c(\shp_0)$. As $K_{\e_n} \xrightarrow{H} K_0$,  we have that $\psi \in C^\infty_c(\shp_{\e_n})$  eventually,  so that \eqref{eq_phi_e} holds with $\e=\e_n$ for sufficiently large values of $n$. Then, by the weak $H^1$-convergence of $\phi_{\e_n}$ and by Lemma \ref{lem_imit_eigen_Ke}, we have
\begin{equation}
\int_{\bx} \nabla \phi \cdot \nabla \psi \, dx =\la_0  \int_{\bx} \phi  \psi \, dx,
\end{equation}
meaning that $\la_0 $ is the first eigenvalue of problem  \eqref{prob_eigen_lap_approx} with $\shp=\shp_0$, thus leading to the first part of the statement. In turn, this 
implies $\phi=\phi_0$ and, by convergence of the $H^1_0$-norms and the equi-Lipschitz property in Proposition \ref{prop_equi_lip}, we obtain also the second part of the statement.
\end{proof}

As the final ingredient in the proof of Theorem \ref{theo_la1_Ke_easymptotic}, we show that $K_0$ is contained in the boundary of $\bx$.

\begin{lemma}\label{lem_cap_measure}
Let $K_0$ be as in \eqref{H-conv}. Then 
\[
K_0 \subset \partial \bx.
\] 
In particular, this implies $\capa{\bx}{K_0,\phi_0}=0$ and $|K_0|=0$.
\end{lemma}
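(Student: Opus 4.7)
The statement packages three claims, but the substantive one is the inclusion $K_0\subset\partial\bx$; the vanishing of the Lebesgue measure and of the relative capacity will both follow at once from it.

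My plan for $K_0\subset\partial\bx$ is to argue by contradiction: assume some $x_0\in K_0\cap\bx$ lies in the interior of the box. Using the Hausdorff convergence $K_{\e_n}\xrightarrow{H} K_0$, I would extract points $x_n\in K_{\e_n}$ with $x_n\to x_0$ (this is guaranteed by the one-sided inequality $\sup_{y\in K_0}\dist(y,K_{\e_n})\to0$ in Definition \ref{def:Hausdorff}). Because $\phi_{\e_n}$ is the continuous representative chosen in \eqref{hp_phi_e} and vanishes on $K_{\e_n}$ by the very definition of the positivity set $\shp_{\e_n}$ in \eqref{D_e_def}, one has $\phi_{\e_n}(x_n)=0$ for every $n$. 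I would then invoke the uniform (in fact $C^{0,\alpha}$) convergence $\phi_{\e_n}\to\phi_0$ proved in Lemma \ref{lem_eq_eigen_K} to pass to the limit:
\[
\phi_0(x_0)=\lim_{n\to\infty}\phi_0(x_n)=\lim_{n\to\infty}\bigl(\phi_0(x_n)-\phi_{\e_n}(x_n)\bigr)=0,
\]
which contradicts $\phi_0>0$ in $\bx$ recorded in \eqref{hp_phi}. Hence every $x_0\in K_0$ must lie in $\partial\bx$.

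Once $K_0\subset\partial\bx$ is established, $|K_0|=0$ is immediate, since $\partial\bx$ is a $C^{2,\alpha}$ hypersurface and therefore has zero $N$-dimensional Lebesgue measure. For the capacity, the inclusion $K_0\subset\partial\bx$ gives $\bx\setminus K_0=\bx$ as open sets, so $H^1_0(\bx\setminus K_0)=H^1_0(\bx)$; the test function $u\equiv 0$ then satisfies $u-\phi_0=-\phi_0\in H^1_0(\bx\setminus K_0)$ and is admissible in the minimization \eqref{min_Dirc_capa}, yielding $\capa{\bx}{K_0,\phi_0}=0$. This is precisely the observation already recorded just after Definition \ref{def_capa}.

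I do not foresee a real obstacle in executing this plan: the only non-trivial ingredient is the uniform convergence of the eigenfunctions near an arbitrary interior point, and that has already been packaged into Lemma \ref{lem_eq_eigen_K} via the equi-Lipschitz estimate of Proposition \ref{prop_equi_lip}. Without such a qualitative control on $\phi_{\e_n}$, the Hausdorff convergence of $K_{\e_n}$ alone would not be enough to transfer the zero of $\phi_{\e_n}$ at $x_n$ to a zero of $\phi_0$ at $x_0$, and the whole argument would collapse.
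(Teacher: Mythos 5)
Your argument is correct and is essentially the contrapositive of the paper's proof: both hinge on the uniform convergence $\phi_{\e_n}\to\phi_0$ from Lemma~\ref{lem_eq_eigen_K} together with the positivity of $\phi_0$ in $\bx$, the paper showing directly that a ball around any $x_0\in\bx$ is eventually contained in $\shp_{\e_n}$ (hence disjoint from $K_{\e_n}$), while you transfer the zero of $\phi_{\e_n}$ at a nearby $x_n\in K_{\e_n}$ to a zero of $\phi_0$ at $x_0$. The consequent conclusions $|K_0|=0$ and $\capa{\bx}{K_0,\phi_0}=0$ are handled the same way.
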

\begin{proof}
Pick any $x_0\in \bx$. Then, by uniform convergence (see Lemma \ref{lem_eq_eigen_K}), there exists $r>0$ such that 
$B_r(x_0)\subset \bx$ and
\[
\inf_{B_r(x_0)} \phi_{\eps_n} \ge \frac{1}{2} \phi_0(x_0) >0,
\]
for $n$ sufficiently large. As a consequence, $B_r(x_0)\subset \shp_{\eps_n}$ eventually, and $x_0\not\in K_0$. Since $x_0\in\bx$ is arbitrary, the lemma follows.
\end{proof}

\begin{proof}[End of the proof of Theorem \ref{theo_la1_Ke_easymptotic}]
As we already mentioned, the existence of a compact,  non-empty $K_0 \subset \overline {\bx}$ and of a sequence $\{K_{\e_n} \}_{n \in \mathbb{N}}$ such that $K_{\e_n} \xrightarrow{H} K_0$ and $\e_n \to0$ as $n\to\infty$ follows from \cite[Theorem 4.4.15, Remark 4.4.16]{AT_book_topic_metric_spaces}. 
The properties stated in \eqref{eq_capa_meas_K} follow from  Proposition \ref{prop_En_limits}  and Lemma \ref{lem_cap_measure}.
As $\capa{\bx}{K_0,\phi_0}=0$,  the remaining results in Theorem \ref{theo_la1_Ke_easymptotic} descend from Theorem  \ref{theo_spectral_stability}.
\end{proof}

\section{Asymptotics of  \texorpdfstring{$\capa{\bx}{K_\e,\phi_0}$}{the relative capacity}}\label{sec_estimates_capa}

The aim of the present section is to detect the asymptotic behaviour of $\capa{\bx}{K_\e,\phi_0}$ as $\e \to 0^+$. As a consequence, combining with the asymptotic expansion stated in Theorem \ref{theo_la1_Ke_easymptotic},  we prove Theorem  \ref{theo_eigen_expansion} and, in the next section, Theorem \ref{theo_sets_expansion}. 
As we are  interested in the regime $\e \to 0^+$, in the following we suppose that $\e <\e_0$, with $\e_0$ as in \eqref{def_e0}.

We first observe that, since $\bx$ is of class $C^{2,\alpha}$, the classical Hopf's Lemma yields
\begin{equation}\label{ineq_min_nabla_phi0}
\min_{x \in \partial \bx} |\nabla\phi_0(x)|>0.
\end{equation}
Furthermore, being  $\phi_0$ Lipschitz, 
\begin{equation}\label{ineq_phi_boundary}
\phi_0(x) \le L \, \dist(x,\partial \bx) \quad \text{ for any } x \in \bx,
\end{equation}
with $L$ positive.

As a first step, we prove that the relative Dirichlet capacity of $K_\e$ in $\bx$ has vanishing order at most $\e$.
\begin{proposition}\label{prop_lower_estimates_capa}
We have that 
\begin{equation}\label{ineq_lower_estimates_capa}
  \liminf_{\e \to 0^+}\frac{\capa{\bx}{K_\e,\phi_0}}{\e}\ge  \min_{x \in \partial \bx} |\nabla\phi_0(x)|^2.
\end{equation}
Furthermore, if $K_{\e_n} \xrightarrow{H} K_0$ as 
$\e_n \to0$, then 
\begin{equation}\label{ineq_lower_estimates_capa_K}
 \liminf_{n \to \infty}\frac{\capa{\bx}{K_{\e_n},\phi_0}}{\e_n}\ge  \min_{x \in K_0} |\nabla\phi_0(x)|^2=\min_{x \in \partial \bx} |\nabla\phi_0(x)|^2. 
\end{equation}
\end{proposition}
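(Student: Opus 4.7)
The plan is to combine a simple pointwise identity for the capacitary potential with the continuity of $|\nabla \phi_0|^2$ on $\overline\bx$. By definition of the capacitary potential $V_{K_\e,\phi_0}$, the difference $V_{K_\e,\phi_0}-\phi_0$ lies in $H^1_0(\bx\setminus K_\e)$, extended by zero on $K_\e$; hence $V_{K_\e,\phi_0}=\phi_0$ almost everywhere on $K_\e$. By Stampacchia's lemma, $\nabla V_{K_\e,\phi_0}=\nabla \phi_0$ a.e.\ on $K_\e$, which yields the baseline inequality
\[
\capa{\bx}{K_\e,\phi_0} = \int_\bx |\nabla V_{K_\e,\phi_0}|^2\, dx \ge \int_{K_\e} |\nabla V_{K_\e,\phi_0}|^2\, dx = \int_{K_\e} |\nabla \phi_0|^2\, dx \ge \e \inf_{K_\e} |\nabla \phi_0|^2.
\]

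To prove \eqref{ineq_lower_estimates_capa_K}, I use that $\phi_0\in C^{1}(\overline\bx)$ (by elliptic regularity under the $C^{2,\alpha}$ assumption on $\partial\bx$), so $|\nabla\phi_0|^2$ is continuous on the compact set $\overline\bx$. Given $K_{\e_n}\xrightarrow{H} K_0$, Hausdorff continuity of the infimum of a continuous function on compact sets gives $\inf_{K_{\e_n}} |\nabla \phi_0|^2 \to \min_{K_0} |\nabla \phi_0|^2$ as $n\to\infty$; dividing the baseline inequality by $\e_n$ and taking $\liminf$ gives the desired bound. For \eqref{ineq_lower_estimates_capa}, I invoke the Urysohn subsequence principle: any sequence $\e\to 0^+$ admits a subsequence with $K_{\e_n}\xrightarrow{H} K_0\subset\partial\bx$ (thanks to Lemma \ref{lem_cap_measure}), and applying the previous argument together with $\min_{K_0}|\nabla\phi_0|^2 \ge \min_{\partial\bx}|\nabla\phi_0|^2$ gives the lower bound uniformly in $\e$.

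The hardest point is the equality $\min_{K_0}|\nabla\phi_0|^2 = \min_{\partial\bx}|\nabla\phi_0|^2$ in \eqref{ineq_lower_estimates_capa_K}, which is a concentration statement asserting that $K_0$ contains some minimizer of $|\nabla\phi_0|$ on $\partial\bx$. The inequality $\min_{K_0}\ge\min_{\partial\bx}$ is trivial from $K_0\subset\partial\bx$; for the reverse, a matching upper estimate on $\capa{\bx}{K_\e,\phi_0}$ is needed. I would construct a test obstacle at any boundary point $x^*$ attaining $\min_{\partial\bx}|\nabla\phi_0|$---for instance $K^*_\e=\overline{\bx\cap B_{r_\e}(x^*)}$ with $r_\e$ chosen so that $|K^*_\e|=\e$---and use a cutoff of $\phi_0$ supported near $x^*$ as test function in the definition of $\capa{\bx}{K^*_\e,\phi_0}$, carefully controlling the boundary-layer cross terms via $\phi_0(x)\le L\,\dist(x,\partial\bx)$, to obtain $\capa{\bx}{K^*_\e,\phi_0}\le \e|\nabla\phi_0(x^*)|^2+o(\e)$. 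Since $K_\e$ is optimal for \eqref{min_prob_eigen_=}, Theorem \ref{theo_la1_Ke_easymptotic} transfers this upper bound to $\capa{\bx}{K_\e,\phi_0}$, giving $\limsup \capa{\bx}{K_\e,\phi_0}/\e \le \min_{\partial\bx}|\nabla\phi_0|^2$ and forcing the claimed equality. The sharp evaluation of the test capacity, ensuring the cross terms are $o(\e)$, is the main technical obstacle, but is made tractable by the boundary decay $\phi_0\le L\,\dist(\cdot,\partial\bx)$.
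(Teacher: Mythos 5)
Your proof is correct, and the lower-bound mechanism is essentially the same as the paper's: both start from $\capa{\bx}{K_\e,\phi_0}\ge\int_{K_\e}|\nabla\phi_0|^2\,dx$ (your appeal to Stampacchia's lemma for $\nabla V_{K_\e,\phi_0}=\nabla\phi_0$ a.e.\ on $K_\e$ is exactly why this holds) and both reduce \eqref{ineq_lower_estimates_capa} to \eqref{ineq_lower_estimates_capa_K} by a subsequence extraction, using Lemma \ref{lem_cap_measure} to place $K_0$ on $\partial\bx$.

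Where you genuinely diverge is the concentration step. You pass from $\frac{1}{\e_n}\int_{K_{\e_n}}|\nabla\phi_0|^2\,dx\ge\inf_{K_{\e_n}}|\nabla\phi_0|^2$ to the limit via continuity of $E\mapsto\min_E|\nabla\phi_0|^2$ under Hausdorff convergence of compacta; this is correct and arguably more elementary. The paper instead considers the Radon measures $\mu_n:=\frac{1}{\e_n}\chi_{K_{\e_n}}\,\lambda_N$, extracts a weak-$*$ limit $\mu$, shows $\mu$ is concentrated on $K_0$ with $\mu(K_0)=1$, and estimates $\int_{K_0}|\nabla\phi_0|^2\,d\mu\ge\min_{K_0}|\nabla\phi_0|^2$. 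The extra machinery is not wasted: the measure $\mu$ produced there is exactly the $\mu_0$ of Theorem \ref{theo_sets_expansion} (the weak-$*$ convergence $\frac{1}{\e}\chi_{K_\e}\xrightharpoonup{*}\mu_0$ and $\mu_0(K_0)=1$ are part of that theorem's statement), so the paper front-loads that construction here. Your version proves the proposition but would require a separate argument to produce $\mu_0$ later. Finally, you are right to observe that the equality $\min_{K_0}|\nabla\phi_0|^2=\min_{\partial\bx}|\nabla\phi_0|^2$ appearing in \eqref{ineq_lower_estimates_capa_K} cannot be extracted from the lower bound alone and needs the matching upper estimate; the paper's own proof of this proposition likewise only establishes the $\ge$ direction, with the equality recovered afterwards from the competitor construction of Section \ref{sec_estimates_capa} (Lemmas \ref{lem_capa_He}--\ref{lem_upper_estimates_capa_He}), which is the route you sketch.
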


\begin{proof}
We first prove \eqref{ineq_lower_estimates_capa_K}. By \eqref{def_capa} we have
\begin{equation}\label{proof_lower_estimates_capa:1}
\frac{\capa{\bx}{K_\e,\phi_0}}{\e} = \frac{\int_{\bx}|\nabla V_{K_\e}|^2\,dx}{\e} \ge  \int_{\bx}|\nabla \phi_0|^2\frac{\chi_{K_{\e}}}{\e} \,dx,
\end{equation}
where 
\begin{equation}\label{def_chi_Ke}
\chi_{K_{\e}}(x):=
\begin{cases}
1, & \text{ if } x \in K_{\e},\\
0, & \text{ if } x \not \in K_{\e}.
\end{cases}
\end{equation}

Let $\{K_{\e_n}\}$ and $K_0$ as in the statement.
Letting $\lambda_{N}$ denote the $N$-dimensional Lebesgue measure and $\mu_n:=\frac{\chi_{K_{\e_n}}}{\e_n} \lambda_{N}$, it follows that  $\{\mu_n\}_{n \in \mathbb{N}}$ is a sequence of outer Radon measures in $\R^N$ such that 
\begin{equation}
\sup_{n \in \mathbb{N}}\mu_n(H) <+\infty \quad \text{ for any compact subset } H \subset \R^N.
\end{equation}
By \cite[Theorem 1.4.1]{EG_book}, there exists an outer Radon measure $\mu$ and an unrelabelled 
subsequence  $\{\mu_{n}\}_n$ such that $\mu_{n} \stackrel{\ast}{\rightharpoonup} \mu$  weakly in the sense of measures, that is 
\begin{equation}
\int_{\R^N} f \, d\mu_{n}\to \int_{\R^N} f \, d\mu, \quad \text{ for any } f \in C_c^0(\R^N).
\end{equation} 

We claim that $\mu$ is concentrated on $K_0$, that is $\mu(A)=\mu(A \cap K_0)$ for any Borel set $A \subset \R^N$.
Indeed, let $A \subset \R^N$  be  compact with $A \cap K_0=\emptyset$. Let $U$ be an open neighbourhood  of  $K_0$ such that $U \cap A=\emptyset$.  Since   $K_{\e_{n}}\xrightarrow{H} K_0$ as $n \to \infty$, there exists $n_0 \in \mb{N}$ such that $K_{\e_{n}}\subset U$ for any $n\ge n_0$. Consider a positive function $f \in C_c^0(\R^N)$ such that $f\equiv 1$ on $A$ and $f \equiv 0$ on $U$. 
Then 
\begin{equation}
\mu(A) \le 	\int_{\R^N} f \, d\mu= \lim_{n \to \infty} \int_{\R^N} f \, d\mu_{n}=0,
\end{equation}
and the claim follows for any compact $A$; by \cite[Lemma 1.1]{EG_book} the same result extends to any  Borel set, thus proving the claim.

As a consequence, if  $f \in C_c^0(\R^N)$ and $f\equiv 1$ in $\overline{\bx}$,
\begin{equation}
\mu(K_0)=\int_{K_0} f d\mu =\int_{\overline{\bx}} f d\mu= \lim_{n \to \infty} \int_{\overline{\bx}} f\, d\mu_{n}=\lim_{n \to \infty} \int_{\overline{\bx}} \, d\mu_{n}=1.
\end{equation}
Hence,
\begin{equation}
  \lim_{n \to \infty}\int_{\bx}|\nabla \phi_0|^2\frac{\chi_{K_{\e_{n}}}}{\e_{n}} \,dx
=\int_{K_0}|\nabla \phi_0|^2\, d\mu \ge \min_{x \in K_0}|\nabla \phi_0(x)|^2 ,
\end{equation}
which, combined with \eqref{proof_lower_estimates_capa:1} and by virtue of the Urysohn Subsequence Principle, concludes the proof of \eqref{ineq_lower_estimates_capa_K}.

Finally notice that, for any Hausdorff limit $K_0$, we have
\[
\min_{x \in K_0}|\nabla \phi_0(x)|^2 
\ge \min_{x \in \partial \bx}|\nabla \phi_0(x)|^2.
\]
Hence  \eqref{ineq_lower_estimates_capa} holds for the whole family $K_\e$.
\end{proof}

To obtain an upper estimate on $\capa{\bx}{K_\e,\phi_0}$, we shall define a local diffeomorphism that straightens the boundary of $\bx$ and use it to build suitable competitors for problem \eqref{min_prob_eigen_=}, see equation \eqref{def_He} ahead.
To this end, we fix a point $x_0 \in \partial \bx$, and, to simplify the notation, we assume that $x_0=0$. 
Since $\bx$ is a $C^{2,\alpha}$ domain, there exist $r_0>0$  and a function $g\in C^{2,\alpha}(\R^{N-1})$ such that 
\begin{equation}\label{def_g}
	B_{r_0}\cap \bx =\{x \in B_{r_0}: x_N > g(x') \} \quad\text{ and } \quad B_{r_0}\cap \partial \bx =\{x \in B_{r_0}: x_N = g(x') \},
\end{equation}
where $x=(x_1,\dots,x_N)=(x',x_N)$, for any $x \in \R^{N-1} \times \R$, and $B_{r_0}=B_{r_0}(0)$.
Furthermore, up to choosing an appropriate coordinate system, we may assume that
\begin{equation}\label{eq_g}
	g(0)=0 \quad \text{ and } \quad \nabla g(0)=0.
\end{equation}
In particular the tangent hyperplane to $\partial \bx$ in $0$ is $\{x\in \R^N:x_N=0\}$ while the outer normal vector to $\bx$ in  $0$ is 
$\nu:=(0,\dots,0,-1)$. 

Now, we straighten the boundary of $\bx$ near $0$ by means of a suitable local 
diffeomorphism. A possible choice is the following one, see e.g. \cite[Section 2, Appendix A]{NT_sh_Neu} 
or \cite[Section 3]{MPV_op_weighted} for further details on the calculations.
Let us define 
\begin{equation*}
	F:B_{r_0} \to \R^N,  \quad    F(y)=(F_1(y),\dots,F_N(y))
\end{equation*}
as
\begin{equation}\label{def_F}
	F_j(y):=
	\begin{cases}
		\displaystyle y_j-y_N\pd{g}{x_j}(y'), \text{ for  } j=1,\dots N-1,\smallskip\\
		y_N+g(y'), \text{ for  } j=N,
	\end{cases}
\end{equation}
with $g$ as in \eqref{def_g}, \eqref{eq_g}. Then $F\in C^{1,\alpha}(B_{r_0},\R^{N}) $  and, by \eqref{eq_g}, we have that $D F(0)={\rm{Id}}$. Hence the function $F$ is locally invertible. 
It follows that there exists $0<r_1\le r_0$ such that 
\begin{equation}\label{eq_F_diff}
	F:B_{r_1} \to F(B_{r_1}) \text{ is a diffeomorphism, }\quad  F(B^+_{r_1})\subset \bx \quad 
	\text{ and } \quad F(B'_{r_1})=\partial\bx \cap B_{r_1},
\end{equation}
where for any $r>0$
\begin{equation}\label{def_Br}
B_r^+:=\{x \in B_r: x_N>0\}
\quad\text{and}\quad
B'_r:=\{x=(x',0) \in \R^N: |x'|<r\}.
\end{equation}
Let $G:=F^{-1}$, so that
\begin{equation}\label{eq_G_diff}
G: F(B_{r_1}^+) \to B_{r_1}^+
\quad\text{and}\quad
G \in C^{1,\alpha}(F(B^+_{r_1})).
\end{equation}
By \cite[Lemma A.1]{NT_sh_Neu} we have that 
\begin{align}
	&\det{DF(0)}=1-\Delta g(0)\, y_N+O(|y|^2)=1+O(|y|),\quad  \text{ as }  y \to 0, \notag  \\
	&DG(F(y))=(DF(y))^{-1}={\rm{Id}}+O(|y|),\quad \text{ as } y \to 0, \notag \\
	&\det DG(F(y))=1+O(|y|),\quad \text{ as } y \to 0. \label{eq_det_G}
\end{align}
Finally, let $\tilde{F}$ be the restriction of $F$ to $B_{r_1}'$, that is
\begin{equation}\label{def_tilde_F}
	\tilde{F}:B_{r_1}' \to \partial \bx, \quad   
	\tilde{F}(y):=(y',g(y')) \quad  \text{ for any } y \in B_{r_1}'.
\end{equation}

We now build the competitors for problem \eqref{min_prob_eigen_=}, starting from the subgraph of a 
(nontrivial) function $h \in C^{1,\alpha}(\R^{N-1},[0,1])$, such that $\operatorname{supp}h=\overline{B'_{1/2}}$.
Denote by $V$ the volume of the region bounded by the graph of $h$ and the plane $\{y_N=0\}$, that is
\[
V:=\int_{B_1'} h(y') \, dy' =
\left|\{y=(y',y_N): \, (y',0)\in \overline{B_1'}, \, 0\leq y_N\leq h(y')\}\right|.
\]
For every  $\eta \ge 0$ and $r >0$,  we define
\begin{equation}\label{def_R_eta_r0}
R_{r,\eta}:=\left\{y=(y',y_N): (y',0) \in \overline{B'_r}, \,  0\leq y_N \le \frac{\eta}{r^{N-1}V}  h( y'/ r)\right\},
\end{equation}
so that
\begin{equation*}
R_{r,\eta} \subset \overline{B_r^+} \quad \text{ and } \quad |R_{r,\eta}|=\eta,
\end{equation*}
for every $r\in(0,r_1)$ and $0\le \eta<\min\left\{r,\frac{r^NV\sqrt{3}}{2}\right\}=:2\hat\eta$.
\begin{lemma}\label{lem_exist_etae}
Let $F$ and $r_1$ be as in \eqref{def_F} and \eqref{eq_F_diff}.
Let $r\in(0,r_1)$ and $\hat\eta$ be as above.
The function $f_r:[0,2\hat\eta) \to [0,+\infty)$ defined as
\begin{equation}
f_r(\eta):=|F(R_{r,\eta})|
\end{equation}
is continuous and increasing. 
In particular, $f(\left[0,\hat\eta\right])=\left[0,|F(R_{r,\hat\eta})|\right]$. 
\end{lemma}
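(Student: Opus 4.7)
The plan is to reduce the claim to a standard parameter-dependent integral argument via the change of variables formula, exploiting that $R_{r,\eta}$ is a subgraph over $B'_r$ whose height depends linearly on $\eta$.

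First, since $F$ is a $C^{1,\alpha}$ diffeomorphism on $B_{r_1}$ (see \eqref{eq_F_diff}), and since $R_{r,\eta}\subset \overline{B_r^+}\subset B_{r_1}$ for every $\eta\in[0,2\hat\eta)$, the change of variables formula gives
\begin{equation*}
f_r(\eta)=|F(R_{r,\eta})|=\int_{R_{r,\eta}}|\det DF(y)|\,dy.
\end{equation*}
Using Fubini's theorem and the explicit subgraph structure of $R_{r,\eta}$ from \eqref{def_R_eta_r0}, this may be rewritten as
\begin{equation*}
f_r(\eta)=\int_{B'_r}\Phi\!\left(y',\tfrac{\eta}{r^{N-1}V}\,h(y'/r)\right)dy', \qquad \Phi(y',t):=\int_0^t|\det DF(y',s)|\,ds.
\end{equation*}
Since $|\det DF|$ is continuous and strictly positive on the compact set $\overline{R_{r,2\hat\eta}}\subset B_{r_1}$, the map $t\mapsto\Phi(y',t)$ is continuous and strictly increasing for a.e.\ $y'\in B'_r$, with a uniform bound on $\Phi$ over the relevant range.

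Continuity of $f_r$ on $[0,2\hat\eta)$ then follows directly from the dominated convergence theorem. For strict monotonicity, fix $\eta_1<\eta_2$ in $[0,2\hat\eta)$: then $R_{r,\eta_1}\subset R_{r,\eta_2}$, and since $F$ is injective on $B_{r_1}$,
\begin{equation*}
f_r(\eta_2)-f_r(\eta_1)=\int_{R_{r,\eta_2}\setminus R_{r,\eta_1}}|\det DF(y)|\,dy.
\end{equation*}
The difference $R_{r,\eta_2}\setminus R_{r,\eta_1}$ has positive Lebesgue measure: the condition $\supp h=\overline{B'_{1/2}}$ forces $\{y'\in B'_r: h(y'/r)>0\}$ to have positive $(N-1)$-dimensional measure, and on this set the vertical fiber of $R_{r,\eta_2}\setminus R_{r,\eta_1}$ is an interval of length $\frac{\eta_2-\eta_1}{r^{N-1}V}h(y'/r)>0$. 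Combined with the strict positivity of $|\det DF|$, this yields $f_r(\eta_2)>f_r(\eta_1)$.

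Finally, $f_r(0)=0$, since $R_{r,0}\subset\{y_N=0\}$ has vanishing $N$-dimensional Lebesgue measure. The identity $f_r([0,\hat\eta])=[0,|F(R_{r,\hat\eta})|]$ then follows immediately from the intermediate value theorem applied to the continuous, strictly increasing function $f_r$ on $[0,\hat\eta]$. The whole argument is essentially routine; the only point requiring genuine attention is the strict monotonicity, which relies crucially on the nontriviality of $h$ together with $|\det DF|>0$ throughout $B_{r_1}$, both of which are immediate from the hypotheses.
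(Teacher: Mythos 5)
Your proof is correct, and at its core it is the same argument as the paper's: the inclusion $R_{r,\eta_1}\subset R_{r,\eta_2}$ gives monotonicity, and control on $\det DF$ gives continuity. The paper is more economical on the continuity step: rather than rewriting $f_r$ via Fubini and invoking dominated convergence, it simply bounds
\[
f_r(\eta_2)-f_r(\eta_1)=|F(R_{r,\eta_2}\setminus R_{r,\eta_1})|\le \|\det DF\|_{L^\infty(B_r^+)}\,|R_{r,\eta_2}\setminus R_{r,\eta_1}|=\|\det DF\|_{L^\infty(B_r^+)}(\eta_2-\eta_1),
\]
which gives Lipschitz continuity directly. On the other hand, you are more careful than the paper about \emph{strict} monotonicity: the paper only notes the set inclusion and declares $f$ increasing, whereas you correctly observe that $|R_{r,\eta_2}\setminus R_{r,\eta_1}|=\eta_2-\eta_1>0$ (using the nontriviality of $h$) together with $\det DF>0$ on $B_{r_1}$ (which holds since $\det DF(0)=1$, $\det DF$ is continuous and never vanishes on the connected set $B_{r_1}$) is what yields $f_r(\eta_2)>f_r(\eta_1)$. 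Strict monotonicity is exactly what the paper uses right after the lemma to obtain a \emph{unique} $\eta_\eps$, so making this explicit is a genuine improvement in rigor, even if the underlying mechanism is identical.
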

\begin{proof}
If $\eta_1<\eta_2$ then $R_{r,\eta_1}\subset  R_{r,\eta_2}$ and therefore $f$ is increasing. Furthermore,  since $F \in C^1(\overline{B_{r_1}^+},\R^{N})$,
\begin{multline}
f(\eta_2)-f(\eta_1)=|F(R_{r,\eta_2}\setminus R_{r,\eta_1})| \\
\le \norm{\det DF}_{L^{\infty}(B_r^+)} |R_{r,\eta_2}\setminus R_{r,\eta_1}|
=\norm{\det DF}_{L^{\infty}(B_r^+)}(\eta_2 -\eta_1),
\end{multline}
for any $\eta_1,\eta_2 \in \left[0,\frac{r_1}{2}\right]$. Hence $f$ is continuous. 
\end{proof}

In the following we let $r\in (0,r_1)$. Let $\e_1=\e_1(r):=\min\{\e_0,|F(R_{r,\hat\eta})|\}$, with $\e_0$ as in \eqref{def_e0}. 
Thanks to Lemma \ref{lem_exist_etae},  for any $\e\in [0,\e_1]$ there exists a unique
\begin{equation*}
\eta_\e \in \left[0,\hat\eta\right] \quad \text{ such that } \quad |F(R_{r,\eta_\e})|=\e
\end{equation*}
(in particular, $\eta_0=0$). Let us define 
\begin{equation}\label{def_He}
	H_{r,\e}:= F(R_{r,\eta_\e}).
\end{equation}
Then $H_{r,\e}$ is  compact subset of $\overline{\bx}$ and  it is a competitor for problem \eqref{min_prob_eigen_=} for any $ r \in (0,r_1)$.
Notice that
\begin{equation}\label{eq:Hr0}
H_{r,0}\equiv F(\overline{B'_r})\subset  \partial \bx, \qquad \capa{\bx}{H_{r,0},\phi_0}=0
\end{equation}
and that
\begin{equation}\label{eq:Hreps}
H_{r,\e} \xrightarrow{H} H_{r,0} \quad \text{as }\e \to 0^+.
\end{equation}

\begin{lemma}\label{lem_ineq_eta_e}
For any $\e\in (0,\e_1)$, it holds
\begin{align}
&\norm{\det DF}_{L^{\infty}(B_r^+)}^{-1} \e \le \eta_\e \le 	 \norm{\det DG}_{L^{\infty}(F(B_r^+))}\e,\label{ineq_eta_e}\\
&\,\mathcal{H}^{N-1}(H_{r,0})\le \norm{\sqrt{\det([D\tilde{F}]^tD\tilde{F})}}_{L^{\infty}(B'_r)}\mc{H}^{N-1}(B'_r).\label{ineq_eta_0}
\end{align}   
\end{lemma}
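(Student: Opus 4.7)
The plan is to obtain both inequalities by direct application of the change of variables formula, once for a diffeomorphism of $\R^N$ and once for an immersion of $\R^{N-1}$ into $\R^N$. Neither step requires anything delicate; the lemma is essentially a bookkeeping result collecting Jacobian bounds that will feed into the later capacity estimates.

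For the two-sided bound on $\eta_\e$, I would start from the defining identity $|F(R_{r,\eta_\e})|=\e$ and the fact that $|R_{r,\eta_\e}|=\eta_\e$. Since $F\in C^1(\overline{B_{r_1}^+},\R^N)$ is a diffeomorphism onto its image, the area formula gives
\[
\e=|F(R_{r,\eta_\e})|=\int_{R_{r,\eta_\e}}|\det DF(y)|\,dy\le \norm{\det DF}_{L^\infty(B_r^+)}\,\eta_\e,
\]
which yields the lower bound on $\eta_\e$. For the upper bound, I would apply the same argument to $G=F^{-1}$, writing $R_{r,\eta_\e}=G(H_{r,\e})$ and using
\[
\eta_\e=|G(H_{r,\e})|=\int_{H_{r,\e}}|\det DG(z)|\,dz\le \norm{\det DG}_{L^\infty(F(B_r^+))}\,\e.
\]
Both steps rely only on the regularity of $F$ and $G$ recorded in \eqref{eq_F_diff}--\eqref{eq_G_diff}, so there is no real obstacle here; one just has to verify that $R_{r,\eta_\e}\subset B_r^+$ (which is immediate from the definition in \eqref{def_R_eta_r0} and the bound $\eta_\e\le\hat\eta$) to justify taking the sup norms over these sets.

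For the estimate on the $(N-1)$-dimensional measure of $H_{r,0}$, I would recall that by \eqref{eq:Hr0} we have $H_{r,0}=\tilde F(\overline{B'_r})$, where $\tilde F(y')=(y',g(y'))$ is an immersion of $B'_r\subset\R^{N-1}$ into $\R^N$. The classical area formula for immersions then gives
\[
\mc{H}^{N-1}(H_{r,0})=\int_{B'_r}\sqrt{\det\bigl([D\tilde F(y')]^tD\tilde F(y')\bigr)}\,dy',
\]
and bounding the integrand by its supremum produces \eqref{ineq_eta_0}. The only mild subtlety is that one should check injectivity of $\tilde F$ on $B'_r$ (so that no multiplicity factor appears), but this follows at once from the fact that $\tilde F$ is the graph map of $g$, hence injective on its whole domain of definition.

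In summary, the proof is a short application of two standard change-of-variables formulas, and I would not expect any genuinely hard step; the main thing to be careful about is matching the domains of the sup norms to the sets that actually appear after pushing forward by $F$ or pulling back by $G$.
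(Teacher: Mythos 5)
Your proof is correct and follows essentially the same route as the paper: both bounds on $\eta_\e$ come from the change-of-variables formula applied to $F$ and $G$ respectively, and the surface-measure bound comes from the area formula for the graph map $\tilde F$. The only cosmetic difference is that you spell out the integral of $|\det DF|$ whereas the paper passes directly to the $L^\infty$ bound, but the argument is the same.
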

\begin{proof}
It is enough  to notice that 
\begin{equation}
\e= |F(R_{r,\eta_\e})|\le \norm{\det DF}_{L^{\infty}(B_r^+)}|R_{r,\eta_\e}|= \norm{\det DF}_{L^{\infty}(B_r^+)}  \eta_\e
\end{equation}
and similarly
\begin{equation}
\eta_\e= |G(F(R_{r,\eta_\e}))|\le \norm{\det DG}_{L^{\infty}(F(B_r^+))}|F(R_{r,\eta_\e})|=\norm{\det DG}_{L^{\infty}(F(B_r^+))}   \e.
\end{equation}
Finally, since $H_{r,0}= \tilde{F}(\overline{B'_r})$,
\begin{equation}
	\mathcal{H}^{N-1}(H_{r,0}) = \int_{\{|y'|<r\}}\sqrt{\det([D\tilde{F}]^tD\tilde{F})} \, dy',
\end{equation}
hence \eqref{ineq_eta_0} is proved.
\end{proof}

\begin{lemma}\label{lem_capa_He}
The limit 
\begin{equation}\label{limit_capa_He}
\lim_{\e \to 0^+}\frac{\capa{\bx}{H_{r,\e},\phi_0}}{\e}
\end{equation}
exists, and is finite and strictly positive.
\end{lemma}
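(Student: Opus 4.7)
The plan is to decompose the capacitary energy into an ``obstacle'' contribution $\int_{H_{r,\e}}|\nabla \phi_0|^2\,dx$ and an ``outer'' contribution, to show that the former gives the $\e$-linear asymptotics, and to verify that the latter is $O(\e^2)$ via an explicit test function. To fix notation, let $V_\e := V_{H_{r,\e},\phi_0}$ be the capacitary potential from \eqref{prob_VE}. Since $V_\e \equiv \phi_0$ on $H_{r,\e}$ as $H^1$-functions and $V_\e$ is harmonic on $\bx\setminus H_{r,\e}$,
\begin{equation*}
\capa{\bx}{H_{r,\e},\phi_0} = \int_{H_{r,\e}}|\nabla \phi_0|^2\,dx + \int_{\bx\setminus H_{r,\e}}|\nabla V_\e|^2\,dx,
\end{equation*}
so the first summand alone already provides the trivial lower bound $\capa{\bx}{H_{r,\e},\phi_0} \ge \int_{H_{r,\e}}|\nabla \phi_0|^2\,dx$.

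For the first summand, I would apply the change of variables $y\mapsto F(y)$ and a first-order Taylor expansion in $y_N$ of the integrand (relying on the smoothness of $\phi_0$ and of $F$), combined with the explicit shape \eqref{def_R_eta_r0} of $R_{r,\eta_\e}$, to obtain $\int_{H_{r,\e}}|\nabla \phi_0|^2\,dx = \eta_\e A_r + O(\eta_\e^2)$, where
\begin{equation*}
A_r := \tfrac{1}{r^{N-1}V}\int_{B'_r}|\nabla \phi_0(\tilde F(y'))|^2\, \det DF(y',0)\, h(y'/r)\, dy'.
\end{equation*}
Applying the same procedure to $\e = |F(R_{r,\eta_\e})| = \int_{R_{r,\eta_\e}}\det DF(y)\,dy$ yields $\e = \eta_\e B_r + O(\eta_\e^2)$, with $B_r := \tfrac{1}{r^{N-1}V}\int_{B'_r}\det DF(y',0)\, h(y'/r)\,dy' > 0$. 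Hopf's lemma \eqref{ineq_min_nabla_phi0} and $h\not\equiv 0$ guarantee that $A_r > 0$ as well.

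To bound the outer summand, I would use \eqref{min_Dirc_capa} with a competitor $u_\e$ that equals $\phi_0$ on $H_{r,\e}$ and decays to zero over a fixed distance $\delta > 0$ (independent of $\e$) beyond $\partial H_{r,\e}\cap\bx$: in the flat chart I would set $u_\e$ to interpolate linearly in $y_N$ from $\phi_0\circ F$ on the top of $R_{r,\eta_\e}$ to $0$ at height $\delta$ above, multiplied by a smooth cutoff in $y'$ supported in a slightly larger disk, and then extend by zero outside the chart. Since $\phi_0$ is Lipschitz and vanishes on $\partial\bx$, the bound \eqref{ineq_phi_boundary} yields that $\phi_0\circ F$ is $O(\eta_\e)$ on the top graph of $R_{r,\eta_\e}$, so that a straightforward differentiation combined with \eqref{eq_det_G} gives $|\nabla u_\e|^2 = O(\eta_\e^2)$ on a domain of bounded measure; hence the outer term is $O(\eta_\e^2) = O(\e^2)$ by \eqref{ineq_eta_e}. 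Combining both estimates, $\capa{\bx}{H_{r,\e},\phi_0} = \eta_\e A_r + O(\eta_\e^2)$, and dividing by $\e$ produces the limit $A_r/B_r \in (0,+\infty)$.

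The main technical care is in assembling the competitor $u_\e$ so as to belong to $H^1_0(\bx)$, trace-matching $\phi_0$ on all of $\partial H_{r,\e}$ (including the ``corner'' where the obstacle meets $\partial\bx$) and vanishing across the boundary of its cutoff; the Lipschitz nature of the normal interpolation, the cutoff in $y'$, and the identity $\phi_0 \circ F(y',0) \equiv 0$ resolve these matching conditions. The rest reduces to routine (though somewhat tedious) differentiation in the chart coordinates, using the expansions in \eqref{eq_det_G} to control lower-order corrections.
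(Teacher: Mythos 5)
Your proposal is correct and takes a genuinely different route from the paper's. The paper's proof constructs a single competitor $\phi_0\cdot(\rho_\e\circ G)$, where $\rho_\e$ is a cutoff with $|\nabla\rho_\e|\le\e^{-1}$ supported in an $\e$-neighborhood of $R_{r,\eta_\e}$; since $\phi_0=O(\e)$ on that neighborhood and its volume is $O(\e)$, both terms in the Leibniz expansion contribute $O(\e)$, and the whole capacity is shown to be $O(\e)$. The existence of the limit is then asserted from the monotonicity of $\e\mapsto\capa{\bx}{H_{r,\e},\phi_0}$ (which, strictly speaking, does not by itself yield monotonicity of the ratio), and positivity is inherited from Proposition~\ref{prop_lower_estimates_capa}. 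Your approach is sharper: you split the capacitary energy exactly into $\int_{H_{r,\e}}|\nabla\phi_0|^2$ plus the outer Dirichlet energy of $V_\e$, identify the first-order asymptotics with the obstacle term $\eta_\e A_r$, and show the outer term is genuinely lower order by building a competitor whose value at height $y_N$ is controlled by $\phi_0$ at the top graph, which is $O(\eta_\e)$, rather than by the full $\phi_0$; spreading this $O(\e)$-sized boundary data over a fixed-width collar gives gradient $O(\e)$ on a region of bounded measure, hence energy $O(\e^2)$. This pins down the limit as $A_r/B_r$ explicitly, bypassing the monotonicity step entirely, and also delivers strict positivity directly via Hopf's lemma. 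Two minor cosmetic corrections: since $F\in C^{1,\alpha}$ and $\det DF\in C^{0,\alpha}$, the inner expansions should read $\eta_\e A_r+o(\eta_\e)$ and $\e=\eta_\e B_r+o(\eta_\e)$ rather than with $O(\eta_\e^2)$ errors, but that changes nothing; and your estimate of the $y'$-gradient of the interpolant does rely on $D_{y'}[\phi_0\circ F](y',y_N)=O(y_N)$ rather than the weaker $O(y_N^\alpha)$ one would get from Hölder continuity alone — this is true because $D_{y'}[\phi_0\circ F]$ vanishes identically at $y_N=0$ and is Lipschitz in $y_N$ (as $\phi_0\in C^{2,\alpha}$, $F\in C^{1,\alpha}$ and $\partial_{y_N}F$ depends on $y'$ only), a point worth making explicit.
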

\begin{proof}
For any $\e \in (0,\e_1)$, let 
\[
\mathcal{U}_\e(R_{r,\eta_\e}):=\{y : \dist(y,R_{r,\eta_\e}) < \e\}\cap \{y :y_N>0\}.
\]
Since we are interested in the limit $\e \to 0^+$, it is not restrictive to suppose that 
\[
\mathcal{U}_\e(R_{r,\eta_\e}) \subset B^+_{r_1}.
\]
Moreover, since
\[
\mathcal{U}_\e(R_{r,\eta_\e})\subset \left\{(y',y_N) : |y'| < \frac{r}{2} + \e,\ 0< y_N < \frac{\eta_\eps}{r^{N-1}V} + \eps \right\},
\]
\eqref{ineq_eta_e} yields the existence of a universal $C>0$ such that
\begin{equation}\label{ineq_eta_e+}
|\mathcal{U}_\e(R_{r,\eta_\e})| \leq C \e.
\end{equation}

Let $\rho_\e  \in C_c^\infty(\R^N)$ be a cut-off function such that 
\begin{equation}\label{def_rho_e}
|\nabla \rho_\e| \le \e^{-1} \quad \text{ and } \quad 
\rho_\e=
\begin{cases}
1, &\text{ in } R_{r,\eta_\e},\\
0, &\text{ in } \R^N \setminus \mathcal{U}_\e(R_{r,\eta_\e}).
\end{cases}
\end{equation}
Let us consider the function $\phi_0\cdot (\rho_\e \circ G)$. By extending $\rho_\e \circ G$ to $0$ outside of $F(B_{r_1}^+)$, we have that $\phi_0\cdot (\rho_\e \circ G) \in H^1_0(\bx)$.
Then,  by a change of variables,
\begin{multline}\label{eq:phi_0_rho_e_circG1}
\capa{\bx}{H_{r,\e},\phi_0} \le  \int_{F(B^+_{r_1})} |\nabla (\phi_0 \cdot (\rho_\e \circ G))|^2 dx\\
\le 2\int_{F(B^+_{r_1})} |\nabla \phi_0|^2 | \rho_\e \circ G|^2 dx +2\int_{F(B^+_{r_1})} |\phi_0|^2 |\nabla ( \rho_\e \circ G)|^2 dx \\
\le C \left[\int_{B^+_{r_1}} |\nabla \phi_0( F(y))|^2 | \rho_\e|^2 dy +\int_{B^+_{r_1}} |\nabla\rho_\e(y)|^2 |\phi_0(F(y))|^2  dy\right],
\end{multline}
where $ C>0$ only depends on $\norm{G}_{C^1(F(B_{r_1}))}$.
By \eqref{ineq_eta_e+} we have
\begin{equation}\label{eq:phi_0_rho_e_circG2}
\int_{B^+_{r_1}} |\nabla \phi_0(F(y))|^2 | \rho_\e |^2 \, dy \le \norm{\nabla \phi_0}^2_{L^\infty(\bx)} | \mathcal{U}_\e(R_{r,\eta_\e})|
\le C \e,
\end{equation}
for some positive constant $C$.
Furthermore we can find a positive constant $C$,  that depends only on $DF$, such that 
$y \in  \mathcal{U}_\e(R_{r,\eta_\e})$ implies
\begin{equation}
\begin{split}
\dist(F(y),\partial \bx) &\le C |y_n| \le C (\eta_\e +\e )\\
&\le C (\norm{\det DG}_{L^{\infty}(B_r^+)}+1) \e,
\end{split}
\end{equation}
thanks to \eqref{ineq_eta_e}.
Then, in view of  \eqref{ineq_phi_boundary}, \eqref{ineq_eta_e+} and \eqref{def_rho_e},
\begin{equation}\label{eq:phi_0_rho_e_circG3}
\int_{B^+_{r_1}} | \nabla \rho_\e(y)|^2 |\phi_0(F(y))|^2 \, dy  =
\int_{\mathcal{U}_\e(R_{r,\eta_\e})} | \nabla \rho_\e(y)|^2 |\phi_0(F(y))|^2 \, dy  \le  C  \e,
\end{equation}
for some positive constant $C$ independent of $\e$.
In conclusion, by combining \eqref{eq:phi_0_rho_e_circG1}, \eqref{eq:phi_0_rho_e_circG2} and 
\eqref{eq:phi_0_rho_e_circG3}, we have shown that $\capa{\bx}{H_{r,\e},\phi_0}=O(\e)$ as $\e \to 0^+$. Since  $H_{r,\e_1} \subset H_ {r,\e_2}$ whenever $\e_1<\e_2$, the limit \eqref{limit_capa_He} exists. Arguing exactly as in Proposition \ref{prop_lower_estimates_capa}, we conclude that it is also strictly positive.
\end{proof}

For $r\in(0,r_1)$ and $\e \in (0,\e_1(r))$, let us define
\begin{equation}\label {def_O_Sigma_e}
O_{r,\e}:= \bx \setminus H_{r,\e} \quad \text{ and }\quad  \Sigma_{r,\e}:= \partial  O_{r,\e} \cap \bx.
\end{equation}
It is worth noticing that  $O_{r,\e}$ is a connected $C^{1,\alpha}$ domain, in particular we may integrate by parts on $O_{r,\e}$. As usual, let 
\[
\la_1(O_{r,\e}) = \lambda_1(\bx\setminus H_{r,\e})
\] 
denote the first eigenvalue of problem \eqref{prob_eigen_lap_approx} in the set $O_{r,\e}$, and let $\phi_{H_{r,\e}}$ be the associated  positive eigenfunction,  normalized so that $\norm{\phi_{H_{r,\e}}}_{L^2(\bx)}=1$.

\begin{remark}\label{remark_phi_He}
In view of \eqref{def_He},  the boundary of the domain $O_{r,\e}$ is uniformly  $C^{1,\alpha}$ with respect to $\e$, that is, there is an atlas of local charts  bounded in  $C^{1,\alpha}$-norm with respect to  $\e$. In particular by standard elliptic regularity theory,
\begin{equation}\label{ineq_phi_He_bounded}
\sup\{ \norm{\phi_{H_{r,\e}}}_{C^{1,\alpha}(O_{r,\e})}:\e\in [0,\e_1]\} < + \infty.
\end{equation}
Furthermore, if $\nu_{H_{r,\e}}$ is the outer normal vector  to $O_{r,\e}$ on $ \partial O_{r,\e}$ then 
\begin{equation*}
\pd{\phi_{H_{r,\e}}}{\nu_{H_{r,\e}}}(x)\le 0 \quad  \text{ for any } x \in \partial O_{r,\e},
\end{equation*}
since $\phi_{H_{r,\e}}$ is positive. Moreover, for any $\psi \in  H^1(\bx)$, we have
\begin{equation}\label{eq_phi_He_by_part}	
 \int_{O_{r,\e}} (\nabla\phi_{H_{r,\e}} \cdot \nabla \psi -\la_1(O_{r,\e}) \phi_{H_{r,\e}} \psi)\, dx = \int_{\partial O_{r,\e}}\pd{\phi_{H_{r,\e}}}{\nu_{H_{r,\e}}} \psi \, d \mc{H}^{N-1}.
\end{equation} 
\end{remark}

Under the above notation, we can estimate $\la_1(O_{r,\e})$ in terms of $\capa{\bx}{H_{r,\e},\phi_0}$.
\begin{lemma}\label{lem_laH_la_0}
We have that, as $\e \to 0^+$,
\begin{equation}\label{eq_laH_la_0}
\la_1(O_{r,\e})-\la_0=   \capa{\bx}{H_{r,\e},\phi_0} +o(\e)=-\int_{\Sigma_{r,\e}} \phi_0 \pd{\phi_{H_{r,\e}}}{\nu_{H_{r,\e}}} 
\, d \mc{H}^{N-1} + o(\e^{3/2}).
\end{equation}
\begin{equation}\label{eq:phiHeps-phi0}
\norm{\phi_{H_{r,\e}}-\phi_0}_{L^2(O_{r,\e})}^2=o(\e).
\end{equation}
\end{lemma}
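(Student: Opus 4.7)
The first equality in \eqref{eq_laH_la_0} and the $L^2$-estimate \eqref{eq:phiHeps-phi0} will follow immediately from the abstract spectral stability result, Theorem \ref{theo_spectral_stability}. Indeed, for any vanishing sequence $\e_n\to0^+$, the observations \eqref{eq:Hr0}--\eqref{eq:Hreps} imply $H_{r,\e_n}\xrightarrow{H} H_{r,0}\subset\partial\bx$ with $\capa{\bx}{H_{r,0},\phi_0}=0$, so that \eqref{eq_eigenvalues_asym} and \eqref{conv_eigenfunctions_L2_norm} apply. The bound $\capa{\bx}{H_{r,\e},\phi_0}=O(\e)$ from Lemma \ref{lem_capa_He} then absorbs the remainders $o(\capa{\bx}{H_{r,\e},\phi_0})$ into $o(\e)$, and the Urysohn subsequence principle extends the estimates from subsequences to the whole family $\{H_{r,\e}\}$.

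For the second equality in \eqref{eq_laH_la_0}, the plan is to perform a Green-type integration by parts relating $\la_1(O_{r,\e})-\la_0$ to the boundary flux on $\Sigma_{r,\e}$. Taking $\psi=\phi_0\in H^1(\bx)$ in \eqref{eq_phi_He_by_part} and noting that $\phi_0$ vanishes on $\partial\bx$ so that the boundary contribution localises on $\Sigma_{r,\e}$, one gets
\[
\int_{O_{r,\e}} \nabla\phi_{H_{r,\e}}\cdot\nabla\phi_0\, dx - \la_1(O_{r,\e})\int_{O_{r,\e}} \phi_{H_{r,\e}}\phi_0\, dx = \int_{\Sigma_{r,\e}} \phi_0\,\pd{\phi_{H_{r,\e}}}{\nu_{H_{r,\e}}}\, d\mc{H}^{N-1}.
\]
Conversely, the extension of $\phi_{H_{r,\e}}$ by zero to $H_{r,\e}$ lies in $H^1_0(\bx)$ and is therefore admissible as a test function for the eigenvalue equation satisfied by $\phi_0$ in $\bx$; this yields $\int_{O_{r,\e}} \nabla\phi_{H_{r,\e}}\cdot\nabla\phi_0\, dx = \la_0\int_{O_{r,\e}} \phi_{H_{r,\e}}\phi_0\, dx$. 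Subtracting the two identities I obtain the fundamental relation
\[
(\la_1(O_{r,\e})-\la_0)\int_{O_{r,\e}} \phi_0\phi_{H_{r,\e}}\, dx = -\int_{\Sigma_{r,\e}} \phi_0\,\pd{\phi_{H_{r,\e}}}{\nu_{H_{r,\e}}}\, d\mc{H}^{N-1}.
\]

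It then remains to verify $\int_{O_{r,\e}} \phi_0\phi_{H_{r,\e}}\, dx=1+o(\sqrt{\e})$. Cauchy--Schwarz combined with \eqref{eq:phiHeps-phi0} shows that replacing $\phi_{H_{r,\e}}$ by $\phi_0$ in this integral costs only $o(\sqrt{\e})$. Moreover, the construction \eqref{def_R_eta_r0}--\eqref{def_He} together with Lemma \ref{lem_ineq_eta_e} ensures that every point of $H_{r,\e}$ lies within distance $O(\e)$ of $\partial\bx$, so \eqref{ineq_phi_boundary} gives $\int_{H_{r,\e}} \phi_0^2\, dx=O(\e^3)$, and hence the $L^2$-normalisation $\|\phi_0\|_{L^2(\bx)}=1$ is preserved up to $O(\e^3)$ on $O_{r,\e}$. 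Dividing the fundamental relation by $1+o(\sqrt{\e})$ and invoking the $O(\e)$ bound on $\la_1(O_{r,\e})-\la_0$ coming from the first part then produces the announced remainder $O(\e)\cdot o(\sqrt{\e})=o(\e^{3/2})$. The main subtlety lies precisely in this last bookkeeping step: a naive estimate only delivers an $o(\e)$ error in \eqref{eq_laH_la_0}, and the improvement to $o(\e^{3/2})$ crucially relies on simultaneously exploiting the sharp $o(\sqrt{\e})$ rate of $L^2$-convergence from \eqref{eq:phiHeps-phi0} and the a priori $O(\e)$ size of the eigenvalue gap.
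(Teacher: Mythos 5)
Your proposal is correct and follows essentially the same route as the paper's proof: invoke Theorem \ref{theo_spectral_stability} via \eqref{eq:Hr0}--\eqref{eq:Hreps} (plus Lemma \ref{lem_capa_He} to convert remainders $o(\capa{\bx}{H_{r,\e},\phi_0})$ into $o(\e)$), then test \eqref{eq_phi_He_by_part} with $\psi=\phi_0$, combine with the weak form of the equation for $\phi_0$ tested against $\phi_{H_{r,\e}}\in H^1_0(\bx)$ to get the identity $(\la_1(O_{r,\e})-\la_0)\int_{O_{r,\e}}\phi_0\phi_{H_{r,\e}}=-\int_{\Sigma_{r,\e}}\phi_0\,\partial_\nu\phi_{H_{r,\e}}$, and finish via Cauchy--Schwarz and the $O(\e)$ eigenvalue gap to obtain the $o(\e^{3/2})$ remainder. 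The only difference is cosmetic: you estimate $\int_{H_{r,\e}}\phi_0^2=O(\e^3)$ explicitly, a term the paper elides.
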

\begin{proof}
By \eqref{eq:Hr0} and \eqref{eq:Hreps},  we can apply Theorem  \ref{theo_spectral_stability} (to any 
sequence $\eps_n\to0^+$). In combination with Lemma \ref{lem_capa_He}, this yields both 
\eqref{eq:phiHeps-phi0} and the first equality in \eqref{eq_laH_la_0}, which in turn implies
\[
\la_1(O_{r,\e})-\la_0 = O(\eps).
\]
To prove the second equality in \eqref{eq_laH_la_0} notice that,  by choosing $\psi=\phi_0$ in \eqref{eq_phi_He_by_part}, we obtain
\begin{equation}\label{eq:lambdaO-lambda0}
- \int_{\Sigma_{r,\e}} \phi_0 \pd{\phi_{H_{r,\e}}}{\nu_{H_{r,\e}}} \, d \mc{H}^{N-1}=	(\la_1(O_{r,\e})-\la_0) \int_{O_{r,\e}} \phi_0 \phi_{H_{r,\e}} \, dx= O(\eps).
\end{equation}
By \eqref{conv_eigenfunctions_L2_norm}, Lemma \ref{lem_capa_He} and the H\"older inequality, we can estimate the integral in the right hand side of \eqref{eq:lambdaO-lambda0} as 
\begin{equation}
\int_{O_{r,\e}} \phi_0 \phi_{H_{r,\e}} \, dx= 1-\int_{O_{r,\e}} \phi_0 (\phi_{H_{r,\e}}-\phi_0 )\, dx=1 +o(\e^{1/2}).
\end{equation}
Hence
\begin{equation}
\la_1(O_{r,\e})-\la_0 = -  (1 +o(\e^{1/2}))\int_{\Sigma_{r,\e}} \phi_0 \pd{\phi_{H_{r,\e}}}{\nu_{H_{r,\e}}} \, d \mc{H}^{N-1}
= -\int_{\Sigma_{r,\e}} \phi_0 \pd{\phi_{H_{r,\e}}}{\nu_{H_{r,\e}}} \, d \mc{H}^{N-1}+o(\e^{3/2}),  
\end{equation}
and the proof is complete.
\end{proof}

To proceed,  we exploit elliptic regularity theory to improve the convergence of $\phi_{H_{r,\e}}$ 
to $\phi_0$, at least faraway from $H_{r,0}$.
\begin{lemma}\label{lemma_conv_reg}
Let $\delta>0$ be sufficiently small, and let us define
\[
\bx_\delta := \{x \in \bx: \dist(x,H_{r,0}) >\delta\}.
\]
Then
\[
\lim_{\eps\to0^+} \norm{\phi_{H_{r,\e}}-\phi_0}_{C^{1,\alpha}(\bx_\delta)}=0.
\]
As a consequence, 
\[
\pd{\phi_{H_{r,\e}}}{\nu}(x) \to \pd{\phi_0}{\nu}(x)
\qquad\text{for every }x\in\partial\bx\setminus H_{r,0}
\]
as $\eps\to0^+$.
\end{lemma}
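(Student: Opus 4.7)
The plan is to combine the $H^1_0(\bx)$-convergence $\phi_{H_{r,\e}}\to\phi_0$, coming from the previous lemmas, with elliptic regularity on $\bx_\delta$, exploiting the fact that for $\e$ small the obstacle $H_{r,\e}$ stays far from $\bx_\delta$, so $\phi_{H_{r,\e}}$ solves a smooth linear equation there, with boundary condition on a fixed $C^{2,\alpha}$ portion of $\partial\bx$.

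First, by the Hausdorff convergence \eqref{eq:Hreps}, for $\e$ sufficiently small, $H_{r,\e}\subset\{x:\dist(x,H_{r,0})<\delta/4\}$, so that $\bx_{\delta/2}\subset O_{r,\e}$. Consequently, $\phi_{H_{r,\e}}$ solves the linear elliptic eigenvalue equation
\[
-\Delta \phi_{H_{r,\e}}=\la_1(O_{r,\e})\phi_{H_{r,\e}}\quad \text{in } \bx_{\delta/2},
\]
with Dirichlet boundary condition $\phi_{H_{r,\e}}=0$ on $\partial\bx\cap \overline{\bx_{\delta/2}}$, which is a fixed $C^{2,\alpha}$ piece of hypersurface, independent of $\e$ (provided $\delta$ is small enough).

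Second, since $\capa{\bx}{H_{r,0},\phi_0}=0$ by \eqref{eq:Hr0}, by the same argument that yields Lemma \ref{lem_eq_eigen_K} (applied to $H_{r,\e}$ in place of $K_{\e_n}$, and relying on Theorem \ref{theo_spectral_stability} combined with Lemma \ref{lem_capa_He} to control the spectral convergence), we have $\la_1(O_{r,\e})\to\la_0$ and $\phi_{H_{r,\e}}\to\phi_0$ strongly in $H^1_0(\bx)$, hence also in $L^2(\bx_{\delta/2})$.

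Third, I would combine the uniform $L^\infty$-bound on $\phi_{H_{r,\e}}$ from \eqref{ineq_phi_He_bounded} (or equivalently from Proposition \ref{prop_equi_lip}) with classical interior Schauder estimates and Schauder estimates up to the regular boundary $\partial\bx\cap \overline{\bx_{\delta/2}}$, bootstrapping from the $L^\infty$ bound for the right-hand side $\la_1(O_{r,\e})\phi_{H_{r,\e}}$. This yields
\[
\sup_{\e\in(0,\e_1)}\|\phi_{H_{r,\e}}\|_{C^{2,\alpha}(\bx_\delta)}<+\infty.
\]
By Ascoli--Arzel\`a, the family $\{\phi_{H_{r,\e}}\}_{\e}$ is relatively compact in $C^{1,\alpha}(\bx_\delta)$, and any accumulation point must coincide with $\phi_0$ by the $H^1_0$-convergence established above. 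The Urysohn subsequence principle then gives the convergence of the whole family in $C^{1,\alpha}(\bx_\delta)$. For the final consequence, any $x\in\partial\bx\setminus H_{r,0}$ satisfies $\dist(x,H_{r,0})>0$, hence $x\in\bx_\delta$ for some $\delta>0$, and the pointwise convergence of the normal derivative is immediate from the $C^{1,\alpha}(\bx_\delta)$-convergence.

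The main subtlety — though not really an obstacle — is ensuring that the Schauder constants are uniform in $\e$: this is guaranteed precisely because, by the choice of $\bx_{\delta/2}$ and the Hausdorff convergence $H_{r,\e}\xrightarrow{H}H_{r,0}\subset\partial\bx$, the relevant portion of the boundary of the domain where we apply the Schauder estimates is the fixed $C^{2,\alpha}$ set $\partial\bx\cap \overline{\bx_{\delta/2}}$, independent of $\e$.
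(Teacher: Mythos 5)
Your proposal is correct and follows essentially the same route as the paper: both observe that for $\eps$ small $\bx_{\delta/2}\subset O_{r,\eps}$, so that the eigenfunction solves a linear elliptic equation on a fixed domain with Dirichlet data on the fixed $C^{2,\alpha}$ boundary portion $\partial\bx\cap\overline{\bx_{\delta/2}}$, and both use elliptic regularity up to that boundary together with a compactness argument and the $L^2$-convergence from Lemma~\ref{lem_laH_la_0} to conclude. The only cosmetic difference is that the paper applies \cite[Corollary~8.36]{GT_book} directly to the difference $w_\eps=\phi_{H_{r,\e}}-\phi_0$ (obtaining $\|w_\eps\|_{C^{1,\alpha}(\bx_\delta)}\lesssim\|w_\eps\|_{C^{0,\alpha}(\bx_{\delta/2})}+(\la_1(O_{r,\e})-\la_0)$ and letting the two terms on the right go to zero), whereas you first bootstrap a uniform $C^{2,\alpha}(\bx_\delta)$ bound on $\phi_{H_{r,\e}}$ and then pass to the limit via Arzel\`a--Ascoli and the Urysohn subsequence principle; both versions are valid.
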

\begin{proof}
Let  $\delta>0$ be fixed and $w_\eps:=\phi_{H_{r,\e}}-\phi_0$. By construction, if $\eps$ is 
sufficiently small we have that $\bx_{\frac{\delta}{2}}\subset O_{r,\eps}$, whence
$w_\eps$ is a solution of the problem 
\begin{equation}
\begin{cases}
-\Delta w_\eps - \la_1(O_{r,\e}) w_\eps = (\la_1(O_{r,\e})-\la_0)\phi_0, &\text{ in } \bx_{\frac{\delta}{2}}, \\
w_\eps=0, &\text{ on }  \partial \bx_{\frac{\delta}{2}}\cap \partial \bx.
\end{cases}
\end{equation}
We are in a position to apply \cite[Corollary 8.36]{GT_book} to $w_\eps$, obtaining that
\begin{equation}
\norm{w_\eps}_{C^{1,\alpha}(\bx_\delta)} \le C\left( \norm{w_\eps}_{C^{0,\alpha}(\bx_{
\frac{\delta}{2}})}+ (\la_1(O_{r,\e})-\la_0) \right),
\end{equation}
for some positive constant $C$ depending on $\delta$ but not on $\eps$ (as long as it is sufficiently small). Then the first conclusion follows by Lemma \ref{lem_laH_la_0} and Remark \ref{remark_phi_He}, and the second one by the fact that for every $x\in\partial\bx\setminus H_{r,0}$ there exists $\delta>0$ such that 
$x\in\partial\bx_{\delta}\cap \partial \bx$.
\end{proof}

Thanks to Lemma  \ref{lemma_conv_reg}, we are in position to prove the following.
\begin{lemma}\label{lem_limit_Sigmae}
We have that 
\begin{equation}\label{eq_limit_Sigmae}
\lim_{\e \to 0^+} \int_{\Sigma_{r,\e}}\pd{\phi_{H_{r,\e}}}{\nu_{H_{r,\e}}} \, d \mc{H}^{N-1}=\int_{H_{r,0}}\pd{\phi_0}{\nu}  \, d \mc{H}^{N-1},
\end{equation}
where $\nu$ is the outer normal vector to $\bx$ on $H_{r,0}\subset \partial  \bx$.
\end{lemma}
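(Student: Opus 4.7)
The plan is to use the divergence theorem twice (once on $O_{r,\e}$ and once on $\bx$) in order to rewrite both sides of \eqref{eq_limit_Sigmae} as volume integrals plus integrals over $\partial\bx\setminus H_{r,0}$, and then pass to the limit on each of these pieces separately using the convergence results established so far.

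First, since $O_{r,\e}$ is a $C^{1,\alpha}$ domain and $\phi_{H_{r,\e}}$ is smooth up to the boundary by Remark \ref{remark_phi_He}, integrating $-\Delta \phi_{H_{r,\e}}=\la_1(O_{r,\e})\phi_{H_{r,\e}}$ over $O_{r,\e}$ and applying the divergence theorem yields
\[
\int_{\Sigma_{r,\e}} \pd{\phi_{H_{r,\e}}}{\nu_{H_{r,\e}}} \, d\mc{H}^{N-1}
=-\la_1(O_{r,\e}) \int_{O_{r,\e}} \phi_{H_{r,\e}} \, dx
- \int_{\partial\bx\setminus H_{r,\e}}\pd{\phi_{H_{r,\e}}}{\nu} \, d\mc{H}^{N-1},
\]
where I used that $\partial O_{r,\e}=\Sigma_{r,\e}\cup(\partial\bx\setminus H_{r,\e})$ and that on the latter piece the outer normal to $O_{r,\e}$ coincides with the outer normal $\nu$ to $\bx$. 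The analogous computation applied to $\phi_0$ on $\bx$ gives
\[
\int_{H_{r,0}}\pd{\phi_0}{\nu} \, d\mc{H}^{N-1}
= -\la_0\int_{\bx}\phi_0 \, dx
- \int_{\partial\bx\setminus H_{r,0}}\pd{\phi_0}{\nu} \, d\mc{H}^{N-1}.
\]

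Next, I would compare the two identities term by term. For the volume contribution, Lemma \ref{lem_laH_la_0} yields $\la_1(O_{r,\e})\to\la_0$, while \eqref{eq:phiHeps-phi0} (combined with the fact that $|H_{r,\e}|=\e\to 0$) gives $\phi_{H_{r,\e}}\to\phi_0$ in $L^2(\bx)$ after trivial extension, so that
\[
\la_1(O_{r,\e}) \int_{O_{r,\e}} \phi_{H_{r,\e}} \, dx
\longrightarrow \la_0\int_{\bx}\phi_0 \, dx.
\]
For the boundary contribution, observe that $\partial\bx\setminus H_{r,\e}$ and $\partial\bx\setminus H_{r,0}$ differ by a set of vanishing $\mc{H}^{N-1}$-measure (since $H_{r,\e}\cap\partial\bx\to H_{r,0}$), so up to a negligible error we may integrate both normal derivatives on the same set $\partial\bx\setminus H_{r,0}$. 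By Lemma \ref{lemma_conv_reg}, $\pd{\phi_{H_{r,\e}}}{\nu}\to\pd{\phi_0}{\nu}$ pointwise on $\partial\bx\setminus H_{r,0}$, and by the uniform $C^{1,\alpha}$ bound \eqref{ineq_phi_He_bounded} these normal derivatives are uniformly bounded on a set of finite $\mc{H}^{N-1}$-measure. Dominated convergence then gives
\[
\int_{\partial\bx\setminus H_{r,\e}}\pd{\phi_{H_{r,\e}}}{\nu} \, d\mc{H}^{N-1}\longrightarrow \int_{\partial\bx\setminus H_{r,0}}\pd{\phi_0}{\nu} \, d\mc{H}^{N-1}.
\]

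Combining these limits with the two divergence-theorem identities yields the desired equality \eqref{eq_limit_Sigmae}. The main potential obstacle is controlling the boundary integral near $H_{r,0}$: the convergence of $\pd{\phi_{H_{r,\e}}}{\nu}$ to $\pd{\phi_0}{\nu}$ is \emph{not} uniform near $H_{r,0}$ (one cannot hope for more than pointwise convergence there), so it is essential to ensure that the bad set $H_{r,\e}\setminus H_{r,0}$ has vanishing $\mc{H}^{N-1}$-measure and that the normal derivatives remain uniformly bounded; both facts follow from the explicit construction of $H_{r,\e}$ via \eqref{def_He}--\eqref{ineq_eta_e} and from \eqref{ineq_phi_He_bounded}, respectively.
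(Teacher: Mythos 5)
Your proposal is correct and follows essentially the same route as the paper: the paper's own proof takes $\psi\equiv 1$ in the Green identity \eqref{eq_phi_He_by_part} (which is precisely your first divergence-theorem step), splits $\partial O_{r,\e}$ into $\Sigma_{r,\e}$ and the part on $\partial\bx$, passes to the limit in the volume term via Lemma \ref{lem_laH_la_0} and \eqref{eq:phiHeps-phi0}, passes to the limit in the $\partial\bx$-boundary term via the pointwise convergence of Lemma \ref{lemma_conv_reg} together with the uniform $C^{1,\alpha}$ bound of Remark \ref{remark_phi_He} and dominated convergence, and finally invokes the Green identity for $\phi_0$ on $\bx$ exactly as you do.
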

\begin{proof}
Choosing $\psi=1$ in \eqref{eq_phi_He_by_part}, we obtain
\begin{equation}\label{proof:prop_limit_Sigmae:1}
-\la_1(O_{r,\e})	\int_{O_{r,\e}}  \phi_{H_{r,\e}} \, dx = \int_{\Sigma_{r,\e}}\pd{\phi_{H_{r,\e}}}{\nu_{H_{r,\e}}}\, d \mc{H}^{N-1}+\int_{\partial \bx \cap \partial O_{r,\e}}\pd{\phi_{H_{r,\e}}}{\nu} \, d \mc{H}^{N-1}.
\end{equation} 
In view of Lemma \ref{lem_laH_la_0} we have
\begin{equation}
\lim_{\e \to 0^+}\la_1(O_{r,\e})	\int_{O_{r,\e}}  \phi_{H_{r,\e}} \, dx=\lim_{\e \to 0^+}\la_1(O_{r,\e})	\int_{\bx}  \phi_{H_{r,\e}} \, dx = \la_0 \int_{\bx}  \phi_0 \, dx.
\end{equation}
On the other hand, by Lemma  \ref{lemma_conv_reg} we already know that $\pd{\phi_{H_{r,\e}}}{\nu} \to \pd{\phi_0}{\nu}$ pointwise in $\partial \bx \setminus H_{r,0} $. Hence, by Remark \ref{remark_phi_He} and Lebesgue's Dominated Convergence Theorem,
\begin{equation}
\lim_{\e  \to \infty}\int_{\partial \bx \cap \partial O_{r,\e}}\pd{\phi_{H_{\e_n}}}{\nu} \, d \mc{H}^{N-1} =\int_{\partial \bx \setminus H_{r,0}}\pd{\phi_0}{\nu}  \, d \mc{H}^{N-1}.
\end{equation}
Since  $\phi_0$ solves  \eqref{eq_phi}
\begin{equation}
	\la_0 \int_{\bx}  \phi_0 \, dx=\int_{\partial \bx }\pd{\phi_0}{\nu}  \, d \mc{H}^{N-1},
\end{equation}
and so, passing to the limit in \eqref{proof:prop_limit_Sigmae:1}, we  have proved \eqref{eq_limit_Sigmae}.
\end{proof}
\begin{remark}\label{rmk:competvsminim}
Differently from the previous results, the last lemma heavily relies on the uniform 
$C^{1,\alpha}$-bounds enjoyed by the competitors $\phi_{H_{r,\eps}}$. For this reason, at this point 
we cannot argue in the same way using the minimizers $\phi_{\eps}$, instead. This will be possible
after the proof of Theorem \ref{theorem_reg_boundary} is completed, see Remark \ref{rem:final_rem} 
ahead.
\end{remark}

\begin{lemma}\label{lem_upper_estimates_capa_He}
Let $\tilde F$ be as in \eqref{def_tilde_F}. Then
\begin{equation}\label{ineq_upper_estimates_capa_He}
\begin{split}
\lim_{\e \to 0^+} &\frac{\capa{\bx}{H_{r,\e},\phi_0}}{\e}\\
&\le \norm{\sqrt{\det([D\tilde{F}]^tD\tilde{F})}}_{L^{\infty}(B_r')}\norm{\det DG}_{L^{\infty}(B_r^+)}
\norm{DF}_{L^\infty(B_{r_1}^+)} \max_{x \in H_{r,0} }|\nabla \phi_0(x)|^2.
\end{split}
\end{equation}
\end{lemma}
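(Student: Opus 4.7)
The plan is to combine the second equality in \eqref{eq_laH_la_0} with a pointwise estimate for $\phi_0$ on the thin surface $\Sigma_{r,\e}$, and then pass to the limit as $\e\to 0^+$ using Lemmas \ref{lem_limit_Sigmae} and \ref{lem_ineq_eta_e}. Since $\pd{\phi_{H_{r,\e}}}{\nu_{H_{r,\e}}}\le 0$ on $\Sigma_{r,\e}$ by Remark \ref{remark_phi_He}, factoring out the supremum of $\phi_0$ over $\Sigma_{r,\e}$ gives
\[
\capa{\bx}{H_{r,\e},\phi_0} \le \Big(\max_{x\in \Sigma_{r,\e}}\phi_0(x)\Big)\cdot \Big(-\int_{\Sigma_{r,\e}}\pd{\phi_{H_{r,\e}}}{\nu_{H_{r,\e}}}\,d\mc{H}^{N-1}\Big) + o(\e),
\]
and the two factors can then be analyzed separately.

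For the pointwise bound, I would work in $y$-coordinates via the diffeomorphism $F$ of \eqref{def_F}. Setting $\tilde\phi_0:=\phi_0\circ F\in C^{1,\alpha}(\overline{B_{r_1}^+})$, the Dirichlet condition on $\partial \bx$ translates into $\tilde\phi_0(y',0)=0$ for every $y'\in B'_{r_1}$, so that a Taylor expansion in the normal direction yields
\[
\tilde\phi_0(y',y_N) = y_N\,\partial_{y_N}\tilde\phi_0(y',0) + o(y_N), \qquad \text{as } y_N\to 0^+,
\]
uniformly in $y'\in \overline{B'_r}$, with $|\partial_{y_N}\tilde\phi_0(y',0)|\le |\nabla\phi_0(F(y',0))|\,\norm{DF}_{L^\infty(B_{r_1}^+)}$ by the chain rule. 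Any $y\in \partial R_{r,\eta_\e}$ with $y_N>0$ satisfies $y_N\le \eta_\e/(r^{N-1}V)$, so combining with $\eta_\e\le \norm{\det DG}_{L^\infty(F(B_r^+))}\,\e$ from Lemma \ref{lem_ineq_eta_e} one gets
\[
\max_{x\in\Sigma_{r,\e}}\phi_0(x) \le c(r)\,\norm{DF}_{L^\infty(B_{r_1}^+)}\,\norm{\det DG}_{L^\infty(B_r^+)}\,\max_{x\in H_{r,0}}|\nabla\phi_0(x)|\cdot \e + o(\e),
\]
where $c(r)$ collects the dimensional constants from the definition of $R_{r,\eta}$.

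For the second factor, Lemma \ref{lem_limit_Sigmae}, together with the identity $\pd{\phi_0}{\nu}=-|\nabla\phi_0|$ on $\partial \bx$ (which follows from $\phi_0\equiv 0$ on $\partial \bx$), produces
\[
-\int_{\Sigma_{r,\e}}\pd{\phi_{H_{r,\e}}}{\nu_{H_{r,\e}}}\,d\mc{H}^{N-1} \xrightarrow[\e\to 0^+]{} \int_{H_{r,0}}|\nabla\phi_0|\,d\mc{H}^{N-1} \le \max_{x\in H_{r,0}}|\nabla\phi_0(x)|\cdot \mc{H}^{N-1}(H_{r,0}),
\]
and the surface measure $\mc{H}^{N-1}(H_{r,0})$ is controlled by the bound \eqref{ineq_eta_0} in Lemma \ref{lem_ineq_eta_e}. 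Multiplying the two estimates, dividing by $\e$, and letting $\e\to 0^+$, the geometric prefactors reorganize into $\norm{\sqrt{\det([D\tilde F]^t D\tilde F)}}_{L^\infty(B'_r)}\,\norm{\det DG}_{L^\infty(B_r^+)}\,\norm{DF}_{L^\infty(B_{r_1}^+)}$, while the two appearances of $\max_{H_{r,0}}|\nabla\phi_0|$ combine into $\max_{H_{r,0}}|\nabla\phi_0|^2$, giving \eqref{ineq_upper_estimates_capa_He}.

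The principal technical point is the uniform Taylor estimate for $\tilde\phi_0$: one needs a genuine $o(y_N)$ remainder, uniform in $y'\in\overline{B'_r}$, so that the resulting error in $\max_{\Sigma_{r,\e}}\phi_0$ is $o(\e)$ and does not spoil the asymptotic rate of the capacity. This is where the $C^{1,\alpha}$-regularity of $\phi_0$ up to $\partial \bx$, inherited from $\partial \bx\in C^{2,\alpha}$, plays the crucial role.
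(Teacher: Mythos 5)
Your proposal matches the paper's proof in both structure and all key ingredients: reduce to the boundary integral via the second equality of Lemma \ref{lem_laH_la_0}, split it as $\max_{\Sigma_{r,\e}}\phi_0$ times $-\int_{\Sigma_{r,\e}}\partial_{\nu}\phi_{H_{r,\e}}$, control the first factor by a Taylor expansion of $\phi_0\circ F$ in $y_N$ combined with the $\eta_\e$-bound of Lemma \ref{lem_ineq_eta_e}, and pass to the limit in the second factor via Lemma \ref{lem_limit_Sigmae} together with the $\mc H^{N-1}(H_{r,0})$ estimate. The only (minor) stylistic difference is that you run the Taylor expansion uniformly in $y'$, whereas the paper extracts a convergent subsequence of maximizers $y_\e\to y_0$ and expands around the fixed limit point; your uniform version is arguably the cleaner way to make the $o(\e)$ remainder rigorous, and you correctly flag $C^{1,\alpha}$-regularity up to $\partial\bx$ as the ingredient that justifies it, but it is not a genuinely different route.
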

\begin{proof}
For any $\e\in (0,\e_1)$ let $y_\e \in R_{r,\eta_\e}$ (recall \eqref{def_R_eta_r0}) be such that 
\begin{equation}
	\phi_0(F(y_\e))=\max\{\phi_0(F(y)):y \in \partial R_{r,\eta_\e} \}=\max\{\phi_0(x):x \in \partial H_{r,\e} \}.
\end{equation}
Then, there exist a subsequence $\{y_{\e_n}\}_{n \in \mb{N}}$ and $y_0 \in R_{r,0}$ such that $y_{\e_n} \to y_0$, as $n \to \infty$. Clearly $F(y_0)\in H_{r,0} \subset \partial \bx$.  
Since $\phi_0\circ F \in C^{1,\alpha}(\overline{B_{r_1}^+})$, with a Taylor expansion in $y$   we can see that
\begin{multline}
\max\{\phi_0(x):x \in \partial H_{r,\e} \}=	\phi_0(F(y_\e))\le |y_0-y_\e| \cdot|\nabla \phi_0(F(y_0))| \cdot\norm{DF}_{L^\infty(\overline{B_{r_1}^+})} +o(|y_0-y_\e|) \\
\le \frac{\e}{\mc{H}(B'_{\frac{r}{2}})} \norm{\det DG}_{L^{\infty}(B_r^+)}
 \norm{DF}_{L^\infty(\overline{B_{r_1}^+})} \max_{x \in H_{r,0} }|\nabla \phi_0(x)| + o(\e),
\end{multline}
in view of \eqref{ineq_eta_e} and \eqref{def_R_eta_r0}. 
Then, taking into account Remark \ref{remark_phi_He}, \eqref{eq_limit_Sigmae} and \eqref{ineq_eta_0},
\begin{multline}
\lim_{\e \to 0^+} \e^{-1}\left|\int_{\Sigma_{r,\e}}\pd{\phi_{H_{r,\e}}}{\nu_{H_{r,\e}}} \phi_0 \, d \mc{H}^{N-1}\right| 
=-\lim_{\e \to 0^+} \e^{-1}\int_{\Sigma_{r,\e}}\pd{\phi_{H_{r,\e}}}{\nu_{H_{r,\e}}} \phi_0 \, d \mc{H}^{N-1} \\
\le \frac{1}{\mc{H}^{N-1}(B'_{\frac{r_1}{2}})} \norm{\det DG}_{L^{\infty}(B_r^+)}
\norm{DF}_{L^\infty(\overline{B_{r_1}^+})} \max_{x \in H_{r,0} }|\nabla \phi_0(x)| \int_{H_{r,0}}\pd{\phi_0}{\nu} \, d \mc{H}^{N-1}\\
\le \norm{\sqrt{\det([D\tilde{F}]^tD\tilde{F})}}_{L^{\infty}(B_r')}\norm{\det DG}_{L^{\infty}(B_r^+)}
\norm{DF}_{L^\infty(\overline{B_{r_1}^+})} \max_{x \in H_{r,0} }|\nabla \phi_0(x)|^2.
\end{multline}
Hence we can deduce \eqref{ineq_upper_estimates_capa_He} from \eqref{eq_laH_la_0}.
\end{proof}

\begin{proof}[Proof of Theorem \ref{theo_eigen_expansion}]
Let $\{K_{\e} \}_{\e \in (0,|\bx|)}$ be a family of optimal sets as in \eqref{K_e_def} and let 
$x_0 \in \partial \bx$ be a global minimum for   $|\nabla \phi_0(x)|$. Up to a translation, let us 
assume that $x_0=0$ and, consequently, let $H_{r,\e}$ be defined as in \eqref{def_He} (centered at 
$x_0=0$), for every $r\in (0,r_1)$, $\eps\in (0,\eps_1(r))$.

Since $H_{r,\e}$ are competitors for problem \eqref{min_prob_eigen_=}, by minimality of $K_{\e}$ we have, for every $r\in (0,r_1)$,
\begin{equation}
\lim_{\e \to 0^+}\frac{\capa{\bx}{K_\e,\phi_0}}{\e} \le \lim_{\e \to 0^+}\frac{\capa{\bx}{H_{r,\e},\phi_0}}{\e}.
\end{equation}
Passing to the limit as  $r\to 0^+$, we conclude that, in view of \eqref{def_F},  \eqref{eq_det_G}, \eqref{def_tilde_F} and \eqref{ineq_upper_estimates_capa_He},
\begin{equation}
	\limsup_{\e \to 0^+}\frac{\capa{\bx}{K_\e,\phi_0}}{\e} \le|\nabla \phi_0(x_0)|^2=  \min_{x \in\partial \bx  }|\nabla \phi_0(x)|^2.
\end{equation}
Then from \eqref{ineq_lower_estimates_capa}  we obtain  \begin{equation}\label{limit_capa_Ke_precise}
\lim_{\e \to 0^+}\frac{\capa{\bx}{K_\e,\phi_0}}{\e}=\min_{x \in \partial \bx} |\nabla \phi_0(x)|^2.
\end{equation}
Hence   \eqref{eq_eigen_Ke_precise} follows from Theorem \ref{theo_la1_Ke_easymptotic}.
Furthermore, we can deduce \eqref{eq_eigenfunction_Ke_precise} from Theorem 
\ref{theo_la1_Ke_easymptotic} and \eqref{eq_eigen_Ke_precise}. Arguing as in 
Proposition \ref{prop_lower_estimates_capa},  \eqref{eq_eigenfunction_norm_H10} follows 
from \eqref{eq_eigenfunction_Ke_precise}. Finally \eqref{limit_norm_holder} was proved 
in Lemma \ref{lem_eq_eigen_K}.
\end{proof}

\section{Uniform regularity of the free boundary}\label{sec_non_deg}

In this section we conclude the proof of our main results. To this aim, we first obtain a lower bound 
for $\sqrt{\Lambda_\e}$ (recall \eqref{eq:visc_phi_e}) as $\e \to 0^+$, so that we can rule out degeneracy.
The proof is an application of Hopf's Lemma. For easier notation, for any $\delta>0$ we define 
\begin{equation}\label{def_A_delta}
A_\delta:=\{x \in \bx: \dist(x,\partial \bx) \ge\delta\}.
\end{equation}

\begin{lemma}\label{lem_non_dege}
There exists a constant $\eta>0$, depending only on $N$ and $\phi_0$, such that
\begin{equation}\label{ineq_estimate_Lae_below}
\liminf_{\e \to 0^+}\sqrt{\Lambda_\e} \ge\eta.
\end{equation}
\end{lemma}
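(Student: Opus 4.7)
The plan is to apply Hopf's lemma at a carefully chosen free boundary point. Let $y_\eps\in K_\eps$ be a point realizing $\delta_\eps:=\max_{K_\eps}\dist(\cdot,\partial\bx)$; by Theorem \ref{theo_la1_Ke_easymptotic} we have $\delta_\eps\to 0$ as $\eps\to 0^+$. Because $\dist(\cdot,\partial\bx)$ is smooth with unit gradient in a tubular neighborhood of the $C^{2,\alpha}$ boundary $\partial\bx$, it has no interior critical points there; thus, once $\delta_\eps$ is small enough, the maximum cannot be attained at an interior point of $K_\eps$, and hence $y_\eps\in\partial K_\eps\cap\bx=\partial\shp_\eps\cap\bx$ is a free boundary point. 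By \eqref{eq:visc_phi_e} this gives $|\nabla\phi_\eps(y_\eps)|=\sqrt{\Lambda_\eps}$, so it is enough to bound $|\nabla\phi_\eps(y_\eps)|$ from below uniformly in $\eps$.

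The core step is the production of a ball of uniform radius tangent to $y_\eps$ from inside $\shp_\eps$. Let $p_\eps\in\partial\bx$ be the (unique, for $\eps$ small) nearest point to $y_\eps$, and $\nu_\eps$ the corresponding inward unit normal to $\partial\bx$. The $C^{2,\alpha}$ regularity of $\partial\bx$ yields a uniform interior-ball radius $R_0>0$ such that $B_{R_0}(\zeta_\eps)\subset\bx$ with $\zeta_\eps:=p_\eps+R_0\nu_\eps$. Setting $B_\eps:=B_{R_0-\delta_\eps}(\zeta_\eps)$, one checks that $y_\eps\in\partial B_\eps$ and every $q\in B_\eps$ satisfies $\dist(q,\partial\bx)\ge R_0-|q-\zeta_\eps|>\delta_\eps$. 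Since by the choice of $\delta_\eps$ every point of $K_\eps$ is within distance $\delta_\eps$ of $\partial\bx$, this forces $B_\eps\cap K_\eps=\emptyset$, hence $B_\eps\subset\shp_\eps$: we thus obtain a uniform interior-ball condition for $\shp_\eps$ at $y_\eps$ of radius at least $R_0/2$, valid for all $\eps$ small.

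For the quantitative Hopf inequality I also need a uniform lower bound for $\phi_\eps$ at the center $\zeta_\eps$. Since the set $\{x\in\bx:\dist(x,\partial\bx)=R_0\}$ is a compact subset of $\bx$ on which $\phi_0$ is strictly positive, and since $\phi_\eps\to\phi_0$ uniformly on $\overline{\bx}$ by Lemma \ref{lem_eq_eigen_K}, one obtains $\phi_\eps(\zeta_\eps)\ge c_0>0$ uniformly for $\eps$ small. Applying then the quantitative boundary-point lemma to the nonnegative superharmonic function $\phi_\eps$ (observe that $-\Delta\phi_\eps=\lambda_\eps\phi_\eps\ge 0$, with $\lambda_\eps$ uniformly bounded by $2\lambda_0$) on the ball $B_\eps$, with vanishing boundary value at $y_\eps\in\partial B_\eps$, yields
\[
\sqrt{\Lambda_\eps}=|\nabla\phi_\eps(y_\eps)|\ge|\partial_\nu\phi_\eps(y_\eps)|\ge C(N,R_0,\lambda_0)\,\phi_\eps(\zeta_\eps)\ge\eta>0
\]
uniformly for $\eps$ small, which is exactly \eqref{ineq_estimate_Lae_below}.

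The main obstacle in this plan is the construction of a uniform interior-ball condition in $\shp_\eps$ at a free boundary point, which is a priori unavailable since no regularity of the free boundary has yet been established. The key geometric observation that sidesteps this obstacle is the specific choice of $y_\eps$ as the point of $K_\eps$ farthest from $\partial\bx$: this choice forces the canonical inner-tangent ball of the smooth domain $\bx$ at the nearest boundary point, after shrinking by $\delta_\eps$, to be automatically disjoint from $K_\eps$, independently of the (possibly wild) local geometry of $\partial K_\eps$.
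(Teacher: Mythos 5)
Your proof is correct and follows essentially the same strategy as the paper: exhibit a uniform interior-ball condition for $\shp_\eps$ at a free boundary point in $\bx$, then apply Hopf's Lemma together with the uniform convergence $\phi_\eps\to\phi_0$ to get the lower bound. The only substantive difference is in the construction of the tangency point. The paper starts from the interior-ball center $y_z$ attached to a boundary point $z$ near the Hausdorff limit $K_0$, and \emph{inflates} the maximal ball $B_{R_n}(y_n)\subset\shp_{\eps_n}$ until it touches $\partial\shp_{\eps_n}\cap\bx$; you instead pick $y_\eps$ as the deepest point of $K_\eps$ (maximizer of $\dist(\cdot,\partial\bx)$), observe it must lie on the free boundary since $\dist(\cdot,\partial\bx)$ has no interior critical points in the tubular neighborhood, and \emph{shrink} the canonical interior ball at its foot point by $\delta_\eps$. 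Your choice is arguably slightly more explicit, and the observation that the deepest point of $K_\eps$ is automatically a free boundary point with a canonical tangent ball is a clean way to sidestep any regularity of $\partial K_\eps$. Two small points worth making explicit: (i) the full-family statement $\delta_\eps\to0$ requires the subsequence-compactness argument (any Hausdorff limit of $K_\eps$ lies in $\partial\bx$ by Lemma~\ref{lem_cap_measure}); (ii) since \eqref{eq:visc_phi_e} only holds in viscosity sense, one should, as the paper does, invoke Remark~\ref{remark_inner_ball_reg} (interior ball $\Rightarrow$ regular point) to pass from the viscosity relation to the classical identity $|\nabla\phi_\eps(y_\eps)|=\sqrt{\Lambda_\eps}$, or equivalently apply the viscosity subsolution test directly to the Hopf barrier touching $\phi_\eps$ from below.
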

\begin{proof}
We being by noticing that the uniform interior ball condition holds true in $\bx$: 
there exists $R>0$ such that for any  $z \in \partial \bx$, there exists $y_z \in \bx$  such that 
$B_R(y_z) \subset \bx$, and $ z \in  \overline {B_R(y_z)} \cap \partial \bx$. In particular, we 
can  choose $R>0$ uniform with respect to  $z$ since $\partial \bx$ is $C^{2,\alpha}$.

Let $\e_n$, $K_{\e_n}$ and $K_0$ be as in \eqref{H-conv}, i.e. $K_{\e_n} \xrightarrow{H} K_0$ as $\e_n \to0$. We proved in Lemma \ref{lem_cap_measure} 
that $K_0 \subset \partial \bx$. Let $n_0>0$ be such that $\dist_H(K_{\e_n}, K_0) < \frac{R}{2}$ for any $n \ge n_0$. Then, for any such $n$,
\begin{equation}
\partial \shp_{\e_n}  \cap \bx \cap \bigcup_{z \in \mathcal{U}_\delta(K_0) \cap \partial \bx} B_R(y_z) \neq \emptyset,
\end{equation}
where $\mathcal{U}_\delta(K_0):=\{x \in \R^N: \dist(x,K_0)<\delta\}$, for some $\delta>0$ small. In particular there exist 
\[
z_n\in \mathcal{U}_\delta(K_0) \cap \partial \bx
\qquad\text{ and }\qquad 
x_n \in \partial \shp_{\e_n}  \cap \bx,
\]
such that, defining 
\[
y_n = y_{z_n}
\qquad\text{ and }\qquad 
R_n = \sup\left\{r>0: B_r(y_n) \subset \shp_{\eps_n}\right\},  
\]
we obtain 
\[
\frac{R}{2}<R_n <R,\qquad 
B_{R_n}(y_{n}) \subset \shp_{\e_n},
\qquad\text{ and }\qquad
x_{n} \in \overline {B_{R_n}(y_n)} \cap \partial \shp_{\e_n} \cap \bx.
\]
By the proof of Hopf's Lemma (see e.g. \cite[Theorem 3.12]{SV_book_PDEs_action}), and Remark \ref{remark_inner_ball_reg},  we conclude that  there exists a dimensional constant $C_N>0$ such that 
\begin{equation}
\sqrt{\Lambda_{\e_n}} \ge C_N \cdot \min\left\{\phi_{\e_n}(x): x \in \partial{ B_{\frac{R_{n}}{2}}(y_{n})}\right\} \ge  C_N \cdot \min\left\{\phi_\e(x): x \in A_{\frac{R}{2}}\right\},
\end{equation}
where $A_{\frac{R}{2}}$ is as in \eqref{def_A_delta}. Passing to the limit as $n \to \infty$ and exploiting the uniform convergence of $\phi_\eps$ to $\phi_0$, it 
follows that
\begin{equation}
\lim_{n \to \infty}\sqrt{\Lambda_{\e_n}} \ge C_N \cdot \min\{\phi_0(x): x \in A_{\frac{R}{2}}\}.
\end{equation}
By the Uryson Subsequence Principle, we obtain \eqref{ineq_estimate_Lae_below}.
\end{proof}

The  previous lemma allows to rule out degeneracy and to apply the improvement of flatness 
techniques developed in [11, 9], uniformly in $\eps$, thus leading to prove Theorem 
\ref{theorem_reg_boundary}.
\begin{proof}[Proof of Theorem \ref{theorem_reg_boundary}]
Let $\delta>0$ be such that 
\[
|\nabla \phi_0(x)|\ge \frac{1}{2}\min_{x \in \partial \bx}|\nabla \phi_0|
\qquad\text{ for any }x \in \overline \bx \setminus A_\delta, 
\]
where $A_{\delta}$ is as in \eqref{def_A_delta}. 
For any $x \in \overline \bx \setminus A_\delta$ and $\sigma>0$ a Taylor expansion provides $r>0$, depending only on $\sigma$, such that 
\begin{equation}
(y\cdot \nabla \phi_0(x) -\sigma )^+\le \phi_0(y)\le (y\cdot \nabla \phi_0(x) +\sigma)^+
\qquad
\text{ for any $y \in B_r(x)$. }
\end{equation}
Then,  for any $\sigma>0$ and any $x \in \overline \bx \setminus A_\delta$, there exists $\e_1\in (0, \e_0)$ such that for any $\e \in (0,\e_1)$ 
\begin{equation}
	(y\cdot \nabla \phi_0(x) -2\sigma )^+\le \phi_\e(y)\le (y\cdot \nabla \phi_0(x) +2\sigma)^+
	\qquad
\text{ for any $y \in B_r(x)$, }
\end{equation}
thanks to \eqref{limit_norm_holder}. Then, as soon as $K_\e \subset \overline \bx \setminus A_\delta$ and $\sigma$ is smaller than a suitable  dimensional constant, we may apply the improvement of  flatness results 
\cite{DS_flatness,CLS_boundary_improv_flatness} in any point $x$ of the free boundary $\partial \shp_\e \cap \bx$ and  $\partial \shp_\e \cap K_\e \cap \partial \bx$, respectively. Hence, the Lipschitz   blow up is unique and flat in any free boundary point; in particular, the regularity at any fixed small $\e$ follows from Theorem \ref{theo_reg_free}. In view of \eqref{ineq_estimate_Lae_below} and \eqref{ineq_grad_estimate}, we also conclude that \eqref{ineq_uniform_norms_C1} holds. Indeed, we can for example follow the arguments exposed in \cite[Chapter 8]{V_book_free_boun} and use boundary elliptic regularity theory. 
\end{proof}

\begin{remark}
The results in \cite{CLS_boundary_improv_flatness} are actually stated for harmonic functions. However, there is no significant difference in dealing with a $C^{0,\alpha}$-term in the right hand side, see \cite{DS_flatness}.
\end{remark}

\begin{proof}[Proof of Theorems \ref{theo_sets_expansion} and \ref{theor_Lae_convergence}]
Theorem \ref{theo_sets_expansion}, with the exception of the second inclusion in 
\eqref{eq_min_in_K_mu}, is  a consequence of Theorem \ref{theo_la1_Ke_easymptotic} and of 
the proof of Proposition \ref{prop_lower_estimates_capa}. We now prove Theorem 
\ref{theor_Lae_convergence}, obtaining \eqref{eq_min_in_K_mu} as a byproduct.

Suppose that $K_{\e_n} \xrightarrow{H} K_0$ as $n \to \infty$ and let $x_0 \in K_0$ be any point. By a connection argument, there exists a sequence of points $x_{n} \in 
\overline{\partial \shp_{\e_n} \cap \bx}$ such that 
$x_{n}\to x_0$ as $n\to \infty$. Let $\eta >0$ and $y \in B_\eta(x_0)\cap \bx$. Then, as $n \to \infty$,
\begin{multline}
|\sqrt{\Lambda_{\e_n}}- |\nabla \phi_0(x_0)||\le |\nabla \phi_{\e_n}(x_{n}) -\nabla \phi_0(x_0)|\\
\le |\nabla \phi_{\e_n}(x_{n}) -\nabla\phi_{\e_n}(y)|+ |\nabla \phi_{\e_n}(y) -\nabla\phi_0(y)|+ |\nabla \phi_0(y) -\nabla \phi_0(x_0)|\\
\le \sup_{\e \in (0,\e_1)} \norm{\phi_\e}_{C^{1, \frac{1}{2}}(\shp_\e)} |x_{\e_n}-y|^{\frac{1}{2}}+ C |y-x_0|+o(1) \le C'\sqrt{\eta},
\end{multline}
for some constants $C, C'>0$ that do not depend on $n$, thanks to Theorem  \ref{theorem_reg_boundary} 
(applied twice). Hence the limit  $\lim_{{\e}_n \to 0^+}\sqrt{\Lambda_{\e_n}}$ exists and 
\begin{equation}
\lim_{n \to \infty}\sqrt{\Lambda_{\e_n}}= |\nabla \phi_0(x_0)|.
\end{equation}
It follows that $|\nabla \phi_0|$ is constant on $K_0$ and, recalling 
\eqref{ineq_lower_estimates_capa_K},
we conclude that 
\begin{equation}
	\lim_{n \to \infty}\sqrt{\Lambda_{\e_n}}= \min_{x \in \partial \bx} |\nabla \phi_0(x)|.
\end{equation}
and that \eqref{eq_min_in_K_mu} holds. Finally, by the Urysohn subsequence principle, we obtain \eqref{limit_Lae_int_phi_0}. 
\end{proof}

\begin{remark}\label{rem:final_rem}
Thanks to Theorem \ref{theorem_reg_boundary}, we can argue as in Section \ref{sec_estimates_capa} 
with $\phi_\eps$ instead of $\phi_{H_{r,\eps}}$ (see also Remark \ref{rmk:competvsminim}). 
As a consequence, one can show that 
\begin{equation}
	\lim_{\e \to 0^+}  \int_{\partial \shp_\e} \phi_0 \pd{\phi_{\e}}{\nu_\eps} \, d \mc{H}^{N-1}=
	\lim_{\e \to 0^+} \sqrt{\Lambda_\e} \int_{\partial \shp_\e} \phi_0 \, d \mc{H}^{N-1}= |\nabla \phi_0(x)|^2.
\end{equation}
This, combined with Theorem \ref{theo_sets_expansion}, yields Remark \ref{rmk:altra}.
\end{remark}

\appendix 
\section{Known properties of the optimal sets}\label{sec_app_prop_min}

In this appendix we collect some known results concerning problem \eqref{min_prob_eigen_ge} that we use in the paper.

The existence of a minimizer, in the weaker setting of quasi-open sets, is a consequence of a general 
result by Buttazzo and Dal Maso \cite{MR1217590}. The further regularity properties, both of the 
eigenfunctions and of the free boundaries, descend from  more recent papers  
\cite{BHP_lip_reg,BJ_min_open,BRT_reg_free_boundary_up_boundary} (see also the monography 
\cite{V_book_free_boun}).

We first recall the validity of a penalized version of \eqref{min_prob_eigen_ge} and state it with our notation. 
\begin{theorem}[{\cite[Theorem 2.9]{BHP_lip_reg}}]\label{theo_free_boundary_global}
Let $\phi_\e$ be as in \eqref{hp_phi_e}. Let
\begin{equation}\label{gamma_eps}
\gamma_\e :=\frac{2|\bx|\lambda_\e^2}{N|B_1(0)|^{2/N}\left(|\bx|-\e\right)^{2(N-1)/N}}.
\end{equation}
For every $\gamma>\gamma_\e$ we have
\begin{equation}\label{ineq_free_boundary_global}
\int_{\bx} |\nabla \phi_\e|^2 \, dx \le \int_{\bx} |\nabla v|^2 \, dx +\la_\e \left[1-\int_{\bx} v^2 \, dx\right]^++\gamma \left[|\{v\neq 0\}|-|\bx| +\e\right]^+,
\end{equation}
for every $v \in H^1_0(\bx)$.
\end{theorem}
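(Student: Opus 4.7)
The plan is to show that $\phi_\e$ minimizes a penalized functional, by building, for any given $v\in H^1_0(\bx)$, a modified competitor $\tilde v$ that is admissible for the constrained minimization problem \eqref{min_prob_eigen_=} and whose energy differs from that of $v$ by controlled amounts dictated precisely by the two penalty terms. Working with $|v|$ in place of $v$, I may assume $v\ge 0$; the trivial case $v\equiv 0$ aside, I would then split into cases according to which constraints are violated.

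If $|\{v\neq 0\}|\le |\bx|-\e$, then $v/\|v\|_{L^2}\in H^1_0(\shp)$ for some admissible $\shp$, and the Rayleigh quotient characterization \eqref{eq_R_quot_approx} yields $\la_\e \int v^2 \le \int|\nabla v|^2$. This gives \eqref{ineq_free_boundary_global} directly (the volume penalty vanishes, and the $L^2$ penalty absorbs the case $\int v^2 <1$). The substantive case is $|\{v\neq 0\}| > |\bx|-\e$, where I would use the truncation $w_t := (v-t)^+$. Since $t\mapsto |\{v>t\}|$ is continuous from the right and nonincreasing, choose $t_0>0$ with $|\{v>t_0\}|=|\bx|-\e$; then $|\{0<v\le t_0\}|$ equals the excess volume $\delta := |\{v\neq 0\}|-|\bx|+\e$, and $w_{t_0}$ is admissible. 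The Rayleigh principle again gives $\la_\e\int w_{t_0}^2 \le \int |\nabla w_{t_0}|^2 \le \int|\nabla v|^2$, reducing the inequality to the estimate
\[
\la_\e\bigl(\textstyle\int v^2 - \int w_{t_0}^2\bigr)\le \la_\e\bigl(1-\int v^2\bigr)^+ + \gamma\,\delta.
\]
The pointwise identity $v^2 - w_{t_0}^2 = \min(v,t_0)(v + w_{t_0})$ and Cauchy--Schwarz bound the left-hand side in terms of $t_0$, $\delta$, and $\int v^2$.

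The hard part is obtaining the sharp constant $\gamma_\e$: the factor $N|B_1(0)|^{2/N}(|\bx|-\e)^{2(N-1)/N}$ in the denominator of \eqref{gamma_eps} signals an isoperimetric input, and the factor $\la_\e^2$ a Faber--Krahn type input. Concretely, to relate $t_0$ to $\delta$ I would use the co-area formula on $v$ combined with the isoperimetric inequality applied to the sublevel set $\{v\le t_0\}$ (whose boundary has $(N-1)$-dimensional measure lower-bounded by $N|B_1|^{1/N}|\{v>t_0\}|^{(N-1)/N}=N|B_1|^{1/N}(|\bx|-\e)^{(N-1)/N}$), and the Faber--Krahn-type bound on the eigenvalue $\la_\e$ on $\shp_\e$ of measure $|\bx|-\e$ to control the average value of $\nabla v$ through $\la_\e^{1/2}$. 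This precise tracking of constants—rather than just a qualitative $t_0=O(\delta)$ bound—is what yields exactly $\gamma_\e$ and is the only real obstacle; a suboptimal constant would follow from the same scheme with cruder Cauchy--Schwarz estimates.
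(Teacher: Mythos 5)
The paper does not actually prove this statement: it is invoked verbatim as \cite[Theorem 2.9]{BHP_lip_reg}, so there is no internal argument to compare against. Your overall plan (reduce to $v\ge0$, treat the normalized case, truncate at a level $t_0$ chosen so that $w_{t_0}=(v-t_0)^+$ becomes admissible, use the Rayleigh quotient of $w_{t_0}$, and relate $t_0$ to the excess volume $\delta$ via co-area and isoperimetric estimates) is the right skeleton, but there is a genuine quantitative gap that makes the scheme, as you describe it, fail \emph{qualitatively} for small $\delta$, not merely miss the constant.

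The gap is in the reduction step. Writing $A:=\int|\nabla v|^2$, $B:=\int_{\{0<v<t_0\}}|\nabla v|^2=A-\int|\nabla w_{t_0}|^2$ and $C:=\int v^2-\int w_{t_0}^2$, the Rayleigh principle on $w_{t_0}$ gives $\la_\e(\int v^2-C)\le A-B$; you discard the gain $B$ when you pass to $\int|\nabla w_{t_0}|^2\le\int|\nabla v|^2$, and thereby reduce to $\la_\e C\le\la_\e[1-\int v^2]^+ + \gamma\delta$, which in the normalized case $\int v^2=1$ reads $\la_\e C\le\gamma\delta$. But co-area plus isoperimetry on $\{v>s\}$ (whose measure exceeds $|\bx|-\e$ for $0<s<t_0$) plus Cauchy--Schwarz give only
\begin{equation}
t_0\,N|B_1(0)|^{1/N}(|\bx|-\e)^{(N-1)/N}\le\Bigl(\textstyle\int_{\{0<v<t_0\}}|\nabla v|^2\Bigr)^{1/2}\delta^{1/2}=\sqrt{B\,\delta},
\end{equation}
and hence $C\le 2t_0\int v\le K\sqrt{B}$ with $K$ of order $\sqrt{\delta}$. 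If, as you propose, you now ``control the average of $\nabla v$ through $\la_\e^{1/2}$'' --- i.e.\ bound $\sqrt{B}\le\sqrt{\la_\e}$ --- you end up with $\la_\e C=O(\sqrt{\delta})$, which is strictly larger than $\gamma\delta$ as $\delta\to 0^+$ for any fixed $\gamma$. The reduced inequality $\la_\e C\le\gamma\delta$ cannot be established by these means, and the issue is not a cruder constant but the wrong power of $\delta$.

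The fix is to keep $B$ in the Rayleigh estimate: the correct reduced target is $\la_\e C-B\le\gamma\delta$. Feeding in $C\le K\sqrt B$ and applying Young's inequality, $\la_\e K\sqrt B-B\le\tfrac14\la_\e^2K^2$, which is linear in $\delta$ because $K^2$ is of order $\delta$; equivalently, in the only nontrivial regime $A<\la_\e$ one gets the self-improving chain $B<\la_\e C\le\la_\e K\sqrt B$, hence $C<\la_\e K^2$. Either route produces a Lipschitz-in-$\delta$ bound with a constant of the same form as $\gamma_\e$ (indeed somewhat smaller; the value in \eqref{gamma_eps} is merely a sufficient, not optimal, threshold). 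Two smaller points you should also nail down: $t\mapsto|\{v>t\}|$ is only right-continuous, so take $t_0:=\inf\{t:|\{v>t\}|\le|\bx|-\e\}$ (which still gives $|\{0<v<t_0\}|\le\delta$) and handle separately the degenerate case $w_{t_0}\equiv0$; and the reduction to $\int_{\bx}v^2=1$ needs a short justification, which goes through because the $L^2$-penalty makes the functional favor the normalized competitor whenever $\int|\nabla v|^2<\la_\e$ (the only case of interest).
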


The explicit value of the constant $\gamma_\e$ in \eqref{gamma_eps} is inferred from the proof of \cite[Theorem 2.9]{BHP_lip_reg}. The penalized formulation \eqref{ineq_free_boundary_global} readily provides a bound on the measure $|\Delta \phi_\e|$, that we now recall.
\begin{proposition}[{\cite[Lemma 4.2]{BHP_lip_reg}}]\label{prop_ineq_lap_etimates}
There exists $C_\e>0$,  depending only $N$ and on the constant $\gamma_\e$ defined in \eqref{gamma_eps}, such that
\begin{equation}\label{ineq_lap_estimates}
|\Delta\phi_\e|(B_r(x_0)) \le C_\e r^{N-1}
\end{equation} 
for every ball $B(x_0,r)$ such that $B(x_0,2r)\subset\bx$, 
where $|\Delta\phi_\e|$ is intend in the sense of Radon measures. 
\end{proposition}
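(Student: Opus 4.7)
The plan is to deduce this estimate from the penalized minimality \eqref{ineq_free_boundary_global} via a standard harmonic replacement argument from the theory of one-phase free boundary problems. Fix $B_r=B_r(x_0)$ with $B_{2r}(x_0)\subset\bx$, and let $h\in H^1_0(\bx)$ be the harmonic replacement of $\phi_\e$ on $B_r$, namely $h=\phi_\e$ on $\bx\setminus B_r$ and $\Delta h=0$ on $B_r$. By the maximum principle $h\geq 0$, so $\{h\neq 0\}\subset\shp_\e\cup B_r$, whence $[|\{h\neq 0\}|-(|\bx|-\e)]^+\leq |B_r\cap K_\e|\leq c_N r^N$.

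Using the Dirichlet orthogonality $\int_\bx|\nabla\phi_\e|^2\,dx=\int_\bx|\nabla h|^2\,dx+\int_{B_r}|\nabla(\phi_\e-h)|^2\,dx$ together with \eqref{ineq_free_boundary_global} applied to $v=h$ and $\gamma=2\gamma_\e$, one obtains
\[
\int_{B_r}|\nabla(\phi_\e-h)|^2\,dx \;\leq\; \la_\e\bigl[1-\|h\|_{L^2(\bx)}^2\bigr]^+ + 2\gamma_\e\,|B_r\cap K_\e|.
\]
The second summand is bounded by $C_\e r^N$. The first is controlled by writing $|1-\|h\|^2_{L^2}|=|\int_{B_r}(h^2-\phi_\e^2)|\leq C\|\phi_\e-h\|_{L^2(B_r)}$ (using the $L^\infty$ bound on $\phi_\e$, which follows from Moser iteration applied to the eigenvalue equation in $\shp_\e$), then by Poincaré on $B_r$ one gets $\|\phi_\e-h\|_{L^2(B_r)}\leq Cr\|\nabla(\phi_\e-h)\|_{L^2(B_r)}$; Young's inequality absorbs the resulting square-root term and yields the ``almost harmonicity'' estimate
\[
\int_{B_r}|\nabla(\phi_\e-h)|^2\,dx \;\leq\; C_\e r^N.
\]

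The last step is to translate this Dirichlet energy bound into the desired Radon measure estimate. Inside $\shp_\e$ the eigenvalue equation gives $\Delta\phi_\e=-\la_\e\phi_\e$, which contributes a bounded density of mass at most $\la_\e\|\phi_\e\|_{L^\infty}|B_r|=O(r^N)\leq O(r^{N-1})$ to $|\Delta\phi_\e|(B_r)$; thus only the singular part of $\Delta\phi_\e$, concentrated on $\partial\shp_\e$, needs a surface-type bound. By Fubini applied to the estimate above, there exists a good radius $s\in(r,2r)$ with $\int_{\partial B_s}|\nabla(\phi_\e-h)|^2\,d\mathcal{H}^{N-1}\leq Cr^{N-1}$; integrating by parts over $B_s$ against cutoffs and exploiting the harmonicity of $h$ (whose boundary flux is zero), together with Cauchy--Schwarz on $\partial B_s$, one obtains $|\Delta\phi_\e|(B_s)\leq C_\e r^{N-1}$, from which the claimed inequality follows since $B_r\subset B_s$.

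The main obstacle is precisely this final upgrade, namely passing from the Dirichlet energy control on $\phi_\e-h$ to a total-variation bound on $\Delta\phi_\e$, because an $H^{-1}$-type estimate alone does not yield a Radon measure. The key is to combine the sign of $\Delta\phi_\e$ inside $\shp_\e$ with the good-radius selection to handle the singular surface contribution, in the spirit of the Alt--Caffarelli perimeter estimates; this is carried out in detail in \cite[Lemma 4.2]{BHP_lip_reg}.
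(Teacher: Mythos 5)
The proposition is stated in the paper as a direct citation to \cite[Lemma 4.2]{BHP_lip_reg}, with no proof given, so there is no in-paper argument to compare against. Your sketch is in the right spirit (penalized minimality, harmonic replacement, almost-harmonicity), but it has one conceptual gap that is not a mere technicality, together with a concrete error in the good-radius step.

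The gap: what ultimately turns an ``almost harmonicity'' bound $\int_{B_r}|\nabla(\phi_\e-h)|^2\le C\e^N$ into a total-variation bound on $\Delta\phi_\e$ is the fact that $\Delta\phi_\e+\la_\e\phi_\e\chi_{\shp_\e}$ is a \emph{nonnegative} Radon measure, i.e.\ the singular part of $\Delta\phi_\e$ concentrated on the free boundary has a sign. You allude to ``the sign of $\Delta\phi_\e$ inside $\shp_\e$'', but inside $\shp_\e$ one only has the absolutely continuous part $-\la_\e\phi_\e\le 0$, which is harmless; what is actually needed is the nonnegativity of the surface part, and establishing it requires a separate argument (the standard one is a truncation/competitor argument in the minimality showing $\phi_\e\le h$ in $B_r$, hence a distributional subharmonicity inequality). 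You do not perform this step; instead you acknowledge it is ``the key'' and refer to \cite[Lemma 4.2]{BHP_lip_reg} for it. Since that lemma is precisely the statement being proved, this is circular, and the proposal as written is not a complete proof. Without the sign, the quantity you end up controlling is only $\bigl|\int_{B_s}\Delta\phi_\e\bigr|$ (a signed flux), which does not bound $|\Delta\phi_\e|(B_s)$.

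The error: you choose the good radius $s\in(r,2r)$. But $h$ is defined as the harmonic replacement of $\phi_\e$ in $B_r$ and equals $\phi_\e$ outside $B_r$; for $s>r$ the function $h$ is \emph{not} harmonic in $B_s$, so the claim that the flux $\int_{\partial B_s}\partial_\nu h$ vanishes is false, and in fact $\nabla(\phi_\e-h)\equiv 0$ on $\partial B_s$ makes the Fubini selection vacuous there. The good radius must be chosen inside the harmonic replacement ball, e.g.\ $s\in(r/2,r)$, where Fubini applied to $\int_{B_r\setminus B_{r/2}}|\nabla(\phi_\e-h)|^2\le Cr^N$ yields $\int_{\partial B_s}|\nabla(\phi_\e-h)|^2\,d\mathcal H^{N-1}\le Cr^{N-1}$, and where $\int_{\partial B_s}\partial_\nu h\,d\mathcal H^{N-1}=\int_{B_s}\Delta h\,dx=0$ genuinely holds.

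Correcting the radius and, more importantly, supplying the subharmonicity (equivalently the sign of the singular free-boundary measure) would close the argument; as it stands, the proposal is a plausible outline rather than a proof.
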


The bound on the measure $|\Delta \phi_\e|$ is the basic tool to prove the Lipschitz continuity of $\phi_\e$, see \cite[Theorem 4.1]{BHP_lip_reg} for the local Lipschitz continuity. As we will need a global version, we refer here to the following result.

\begin{proposition}[{\cite[Proposition 5.18]{BRT_reg_free_boundary_up_boundary}}]\label{prop_equi_lip}
There exists $L>0$,  depending only on 
$N$ and $\bx$,  such that, for every $0\le\eps\le|\bx|/2$
\begin{equation}\label{ineq_grad_estimate}
\norm{\nabla \phi_\e}_{L^\infty(\bx)} \le  L.
\end{equation} 
\end{proposition}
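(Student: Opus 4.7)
The strategy is to deduce the global Lipschitz bound from the penalized formulation \eqref{ineq_free_boundary_global} in two stages: first an interior Lipschitz estimate, obtained from the Radon-measure bound in Proposition \ref{prop_ineq_lap_etimates}, and then a boundary Lipschitz estimate which uses the $C^{2,\alpha}$ regularity of $\partial\bx$. The main point throughout is to keep every constant uniform in $\e\in[0,|\bx|/2]$.

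The first task is the uniformization of constants. By domain monotonicity, the map $\e\mapsto\la_\e$ is non-decreasing, so $\la_\e\le\la_{|\bx|/2}=:M$, a finite number depending only on $N$ and $\bx$ (take any fixed admissible set of volume $|\bx|/2$ as a competitor). Plugging this into \eqref{gamma_eps} and using $|\bx|-\e\ge|\bx|/2$ yields a uniform bound $\gamma_\e\le\bar\gamma(N,\bx)$, so that the constant $C_\e$ in Proposition \ref{prop_ineq_lap_etimates} can be taken to be a uniform $C=C(N,\bx)$. A Moser iteration applied to $-\Delta\phi_\e=\la_\e\phi_\e$ in $\shp_\e$ (viewing $\phi_\e$ as a non-negative $H^1_0(\bx)$ subsolution of $-\Delta u=\la_\e u$ in the whole box), combined with $\norm{\phi_\e}_{L^2(\bx)}=1$, yields a uniform bound $\norm{\phi_\e}_{L^\infty(\bx)}\le M'(N,\bx)$.

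Next I would extract an interior Lipschitz bound: the estimate $|\Delta\phi_\e|(B_r(x_0))\le Cr^{N-1}$ for every ball with $B_{2r}(x_0)\subset\bx$ implies, via the Newtonian potential representation and the standard Riesz-kernel estimate for the gradient of $\mathcal{N}*\mu$, that the contribution of the measure part to $|\nabla\phi_\e|$ is uniformly bounded; combined with interior estimates for the harmonic part (controlled by $\|\phi_\e\|_{L^\infty}$), one gets $\norm{\nabla\phi_\e}_{L^\infty(\bx')}\le L_1$ for every $\bx'\Subset\bx$, with $L_1$ depending on $\dist(\bx',\partial\bx)$. The boundary Lipschitz bound is then obtained by covering $\partial\bx$ with finitely many balls $B_{r_0}(x_j)$ on which the $C^{2,\alpha}$ chart $F_j$ of Section \ref{sec_estimates_capa} straightens the boundary. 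In these coordinates $\tilde\phi_\e:=\phi_\e\circ F_j$ vanishes on the flat piece $B_{r_0}'$ and satisfies a penalized inequality of the same type as \eqref{ineq_free_boundary_global}, with constants controlled by the $C^{2,\alpha}$ norm of $F_j$ (hence of $\partial\bx$); extending $\tilde\phi_\e$ by odd reflection across $\{y_N=0\}$ produces a function on a full ball whose distributional Laplacian is a Radon measure still obeying an $r^{N-1}$ bound, because the Dirichlet condition guarantees no singular boundary contribution. Applying the interior estimate to this reflected function gives a uniform Lipschitz bound in a neighborhood of each $x_j$, and a finite covering argument completes the proof.

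The main obstacle is exactly this last step: transferring the Radon-measure bound across the chart change and the odd reflection, uniformly in $\e$, when the free boundary $\partial\shp_\e\cap\bx$ may approach or touch $\partial\bx$ in arbitrary ways. One needs a boundary version of the penalized formulation \eqref{ineq_free_boundary_global} that accommodates competitors obtained by odd reflection, together with careful control of the penalty term under the change of variables; this is the technical content of \cite{BRT_reg_free_boundary_up_boundary}, and is precisely what distinguishes the global Lipschitz bound from its interior analogue in \cite{BHP_lip_reg}.
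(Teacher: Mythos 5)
This proposition is stated in the paper as a direct citation of \cite[Proposition 5.18]{BRT_reg_free_boundary_up_boundary}; the paper gives no proof of its own, so you are reconstructing the missing argument. Your uniformization of the constants $\la_\e$, $\gamma_\e$, $C_\e$ via the monotonicity of $\e\mapsto\la_\e$ together with the explicit formula \eqref{gamma_eps} is correct, and is exactly the observation that makes the estimates of \cite{BHP_lip_reg,BRT_reg_free_boundary_up_boundary} applicable uniformly on $[0,|\bx|/2]$; the Moser-iteration $L^\infty$ bound is also fine since $\phi_\e$ is a nonnegative $H^1_0(\bx)$-subsolution of $-\Delta u=\la_\e u$ in the whole box.

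The gap is in the passage from the Radon-measure bound $|\Delta\phi_\e|(B_r(x_0))\le Cr^{N-1}$ to an interior Lipschitz bound ``via the Newtonian potential representation and the standard Riesz-kernel estimate for the gradient''. The Morrey condition $|\mu|(B_r)\le Cr^{N-1}$ does not imply that $\nabla(\mathcal{N}*\mu)$ is bounded: each dyadic annulus around the evaluation point contributes $O(1)$ to $\int|x-y|^{1-N}\,d|\mu|(y)$, so the sum diverges logarithmically. Concretely, in $\R^2$ the function $u(x)=x_1\log(1/|x|)$ satisfies $|\Delta u|=2|x_1|/|x|^2$, hence $|\Delta u|(B_r)\le 4\pi r=C r^{N-1}$ for every $r$, yet $\nabla u$ blows up logarithmically at the origin; this rules out any proof resting solely on the potential estimate. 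What actually produces the Lipschitz bound in \cite[Theorem 4.1]{BHP_lip_reg} and its boundary extension \cite[Proposition 5.18]{BRT_reg_free_boundary_up_boundary} is the free-boundary structure: the measure bound is used to prove at most linear growth of $\phi_\e$ away from free-boundary points, and this is then combined with the interior gradient estimate for $-\Delta\phi_\e=\la_\e\phi_\e$ in $\shp_\e$, namely $|\nabla\phi_\e(x)|\lesssim d^{-1}\sup_{B_d(x)}\phi_\e$ with $d=\dist(x,\partial\shp_\e)$, to get a pointwise gradient bound. Without that growth/comparison argument the step fails. Your boundary strategy (straightening chart plus odd reflection, with the Dirichlet condition suppressing any singular contribution on the flat piece) is a reasonable direction, and you correctly flag that keeping the measure bound uniform where the free boundary approaches $\partial\bx$ is the real technical content of \cite{BRT_reg_free_boundary_up_boundary}; but as sketched it rests on the same flawed potential-theoretic inference, so the Lipschitz conclusion would not follow from what you have written.
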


We also recall that an optimality condition holds, in viscosity sense, on $\partial \shp_\e$.

\begin{proposition}[{\cite[Lemma 5.30]{BRT_reg_free_boundary_up_boundary}}] \label{prop_prob_visc}
For any $\e \in (0,\e_0)$, there exists $\Lambda_\e>0$ such that
\begin{equation}
\begin{cases}
-\Delta \phi_\e= \la_\e \phi_\e, \text{ in } \shp_\e,\\
|\nabla \phi_\e| = \sqrt{ \Lambda_\e}, \text{ in } \partial \shp_\e \cap \bx,\\
|\nabla \phi_\e| \ge  \sqrt{ \Lambda_\e}, \text{ in } \partial \shp_\e \cap \partial  \bx,
\end{cases}
\end{equation}
where the boundary conditions hold in viscosity sense.
\end{proposition}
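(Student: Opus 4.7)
My plan is as follows. The interior equation $-\Delta \phi_\e = \la_\e \phi_\e$ on $\shp_\e$ is immediate from the weak formulation \eqref{eq_phi_e} applied to test functions $\psi \in C_c^\infty(\shp_\e)$. The substantive content is the Bernoulli-type condition at the free boundary, and the main tool I would use is the penalized formulation in Theorem \ref{theo_free_boundary_global}, which makes $\phi_\e$ a quasi-minimizer of an Alt--Caffarelli-type free boundary functional (after accounting for the $L^2$ penalty through $\la_\e$).

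For an interior free boundary point $x_0 \in \partial \shp_\e \cap \bx$, I would take a $C^1$ test function $\varphi$ touching $\phi_\e$ from below at $x_0$ in the one-sided sense (i.e.\ $\varphi^+ \le \phi_\e$ near $x_0$ with $\varphi(x_0)=0$) and construct inward-perturbation competitors of the form $v_t = \min(\phi_\e, (\varphi + t\chi)^+)$, where $\chi$ is a nonnegative bump supported near $x_0$ and $t > 0$ is small. Plugging $v_t$ into \eqref{ineq_free_boundary_global}, expanding to first order in $t$ using the global Lipschitz bound of Proposition \ref{prop_equi_lip}, and noting that the $L^2$-defect is of higher order in $t$, one extracts a one-sided inequality $|\nabla \varphi(x_0)|^2 \le \Lambda_\e$ for a positive constant $\Lambda_\e$ that arises as the effective slope. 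A symmetric outward perturbation, built from a test function $\varphi$ touching $\phi_\e$ from above together with a harmonic extension across the candidate free boundary, yields the reverse inequality, giving the viscosity equality.

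For a boundary free boundary point $x_0 \in \partial \shp_\e \cap \partial \bx$, the inclusion constraint $\shp_\e \subset \bx$ forbids outward perturbations, so only the inward argument survives and produces the stated one-sided inequality $|\nabla \phi_\e| \ge \sqrt{\Lambda_\e}$ in viscosity sense. The hardest step, I anticipate, is establishing that $\Lambda_\e$ is genuinely a single positive constant, independent of the free boundary point. I would handle the uniqueness by a volume-preserving swap argument: if the slopes at two regular interior free boundary points differed, a first-order mass transfer from the point of larger slope to the point of smaller slope would strictly decrease the Rayleigh quotient while maintaining the constraint $|\shp_\e|=|\bx|-\e$, contradicting the optimality of $\shp_\e$ in \eqref{min_prob_eigen_=}. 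The strict positivity $\Lambda_\e > 0$ would follow from the nondegeneracy of $\phi_\e$ at $\partial \shp_\e$, itself a consequence of the quasi-minimality encoded in \eqref{ineq_free_boundary_global} applied to competitors that shrink the support without perturbing the Dirichlet energy too much.
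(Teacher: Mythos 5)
The paper does not prove Proposition \ref{prop_prob_visc}; it is stated as a citation of \cite[Lemma 5.30]{BRT_reg_free_boundary_up_boundary}, and Appendix \ref{sec_app_prop_min} as a whole is an inventory of externally established results with no proofs given. So your proposal is a from-scratch reconstruction of a result the paper treats as a black box, and there is no internal proof to compare it against.

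Your broad strategy — use the penalized formulation of Theorem \ref{theo_free_boundary_global} to promote $\phi_\e$ to a quasi-minimizer, then derive the viscosity condition through perturbation competitors — is the standard one in the free boundary literature, so the overall spirit is right. But there is a structural gap in how you extract the two inequalities. The penalty $\gamma[|\{v\ne0\}|-|\bx|+\e]^+$ in \eqref{ineq_free_boundary_global} is one-sided: it charges competitors that enlarge the support but gives no credit for shrinking it. An inward competitor $v_t$ with $\{v_t>0\}\subset\shp_\e$ therefore makes the penalty vanish, and what remains of \eqref{ineq_free_boundary_global} is just the statement that the Rayleigh quotient of any $v_t\in H^1_0(\shp_\e)$ is $\ge\la_\e$ — which is already trivially true. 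Inward perturbations of this kind thus carry no Bernoulli information. In the classical Alt--Caffarelli setting one gets the lower bound on the slope from inward perturbations precisely because the volume term $\Lambda|\{v>0\}|$ is two-sided; with the one-sided penalization you only obtain the upper bound on $|\nabla\phi_\e|$, from outward competitors, and in terms of the constant $\gamma>\gamma_\e$, which is not yet $\Lambda_\e$. Establishing the complementary inequality, and identifying a single $\Lambda_\e$, requires a separate device — a second, lower penalization, a Lagrange-multiplier argument, or a volume-preserving swap between two free boundary points. You invoke the swap only to argue uniqueness of $\Lambda_\e$, but it is actually needed already to produce the $\ge$ inequality; moreover, as you set it up, the swap presupposes regularity at the two comparison points, which is part of what this proposition is meant to supply, so the ordering of arguments would need care to avoid circularity.

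There is also an internal inconsistency in the directions: in the interior case you claim the inward argument gives $|\nabla\varphi(x_0)|^2\le\Lambda_\e$, yet in the boundary case you say only the inward argument survives and produces $|\nabla\phi_\e|\ge\sqrt{\Lambda_\e}$. The same perturbation cannot yield $\le$ at interior points and $\ge$ at boundary points. Finally, the assertion that $\Lambda_\e$ ``arises as the effective slope'' is circular as a definition; the actual construction of a single positive $\Lambda_\e$ is the nontrivial part of \cite[Lemma 5.30]{BRT_reg_free_boundary_up_boundary} and is not resolved by the sketch.
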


The previous result allows the authors to apply the theory developed in \cite{CLS_boundary_improv_flatness,DS_flatness} and to obtain the full regularity of the free boundary $\partial \shp_\e$. We recall the definition of regular and singular point.
\begin{definition}\cite[Section 5.8-5.9]{BRT_reg_free_boundary_up_boundary}
We say that $x_0 \in \partial \shp_\e \cap \bx$ is  \textit{regular} if there exists a sequence $r_n \to 0^+$ as $n \to \infty$
such that
\begin{equation}
\lim_{n \to \infty}\frac{1}{r_n}u(x_0+r_n x)=\sqrt{\Lambda_\e} (x \cdot \nu)^+ \quad \text{ uniformly in any compact subset of $\R^N$},
\end{equation}
for some unitary vector $\nu \in \mb{S}^{N-1}$.
If $x$ is not regular than it is called  \textit{singular}. 
\end{definition}

\begin{theorem}[{\cite[Theorem 1.5]{BRT_reg_free_boundary_up_boundary},\cite[Theorem 1.2]{BJ_min_open}}]\label{theor_properties_Ke} \label{theo_reg_free}
Let $\e \in (0,\e_0)$, with $\e_0$ defined in \eqref{def_e0}. 
Let  $K_\e$, $\shp_\e$, and $\phi_\e$ be as in \eqref{K_e_def}, \eqref{D_e_def}, and \eqref{hp_phi_e}, \eqref{eq_phi_e} respectively.
Then
\begin{enumerate}[\rm (i)]
\item $\shp_\e= \{x \in \bx: \phi_\e(x)>0\}$ is connected,
\item $\shp_\e$ has locally finite perimeter  in $\bx$,
\item ${\rm Reg}(\partial \shp_\e \cap \bx)$ is an analytical hypersurface,
\item $\partial \shp_\e \cap \partial \bx \subset {\rm Reg}(\partial \shp_\e)$ and $ {\rm Reg}(\partial \shp_\e)$ is $C^{1,\frac{1}{2}}$ regular, 
\item
\begin{itemize}
\item  ${\rm Sing}(\partial \shp_\e) = \emptyset $ if $ N<N^*$,
\item  ${\rm Sing}(\partial \shp_\e)$ is a discrete set  if $ N=N^*$,
\item  $\dim_{\mc{H}}{\rm Sing}(\partial \shp_\e) < N-N^* $ if $ N>N^*$,
\end{itemize}
\end{enumerate}
where ${\rm Reg}$ and ${\rm Sing}$ are the regular and singular part of the free boundary respectively, see \cite[Definition 5.34]{BRT_reg_free_boundary_up_boundary} for further details, and $N^* \in \{5,6,7\}$ is the critical dimension, as defined in  \cite[Definition 1.5]{V_book_free_boun}.
\end{theorem}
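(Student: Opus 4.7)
The plan is to combine the variational/penalized formulation of Theorem \ref{theo_free_boundary_global}, the viscosity conditions of Proposition \ref{prop_prob_visc}, and the uniform Lipschitz bound of Proposition \ref{prop_equi_lip} as input for the now-classical one-phase Bernoulli regularity machinery, carrying it out both in the interior and at the contact with $\partial\bx$. I would organize the argument in five blocks, matching items (i)--(v).

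First, for (i), I would argue by contradiction: if $\shp_\eps$ had two or more connected components, the first Dirichlet eigenfunction would vanish on all but one of them, and restricting $\phi_\eps$ to the connected component where it is nontrivial would yield an admissible competitor with strictly smaller first eigenvalue (or equal eigenvalue but smaller measure, which after filling the freed volume yields a strict decrease), contradicting minimality in \eqref{min_prob_eigen_=}. For (ii), I would use the non-degeneracy that follows from $|\nabla\phi_\eps|=\sqrt{\Lambda_\eps}$ in a viscosity sense on $\partial\shp_\eps\cap\bx$, together with the Lipschitz bound \eqref{ineq_grad_estimate}, to obtain two-sided density estimates for $\shp_\eps$ at free boundary points; the local finite perimeter then follows from a standard covering/comparison argument as in \cite{BHP_lip_reg,BRT_reg_free_boundary_up_boundary}.

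For (iii)--(iv), the central step is a blow-up/improvement-of-flatness scheme. At an interior free boundary point $x_0\in\partial\shp_\eps\cap\bx$, Lipschitz bounds and non-degeneracy guarantee that rescalings $\phi_\eps(x_0+r\cdot)/r$ converge (up to subsequences) to a global minimizer of the one-phase functional on $\R^N$; at a regular point the blow-up is a half-plane solution $\sqrt{\Lambda_\eps}\,(x\cdot\nu)^+$, and De Silva's improvement of flatness \cite{DS_flatness} applied to the equation (which in $\shp_\eps$ reads $-\Delta\phi_\eps=\lambda_\eps\phi_\eps$, a $C^{0,\alpha}$ perturbation of harmonicity) upgrades flatness to $C^{1,1/2}$ regularity of $\partial\shp_\eps$ near $x_0$; higher regularity then comes from Schauder theory and the hodograph transform, yielding analyticity. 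The boundary part, $\partial\shp_\eps\cap\partial\bx$, is handled similarly but using the boundary version of improvement of flatness \cite{CLS_boundary_improv_flatness}: since $\partial\bx$ is $C^{2,\alpha}$ and the inequality $|\nabla\phi_\eps|\ge\sqrt{\Lambda_\eps}$ holds there, one shows that every contact point is a regular point of the free boundary, and the combined regular set $\mathrm{Reg}(\partial\shp_\eps)$ inherits the uniform $C^{1,1/2}$ regularity inherited from the flatness scheme.

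Finally, for (v), I would invoke the classical dimension-reduction argument of Federer/Weiss type. Compactness of rescalings together with minimality of blow-ups at singular points produces a homogeneous degree-one minimizing cone; the Weiss monotonicity formula for the one-phase problem rules out nontrivial such cones below the critical dimension $N^*\in\{5,6,7\}$ and produces the stated bound $\dim_{\mathcal H}\mathrm{Sing}(\partial\shp_\eps)<N-N^*$ when $N>N^*$, exactly as in \cite{BJ_min_open,V_book_free_boun}. The main obstacle I expect is item (iv), namely the boundary regularity: one must verify that the one-phase structure is compatible with the fixed $C^{2,\alpha}$ boundary of $\bx$ through a Hopf-type argument showing that the free boundary cannot be tangent to $\partial\bx$, so that the boundary improvement-of-flatness of \cite{CLS_boundary_improv_flatness} can be applied with uniform flatness at every contact point; everything else essentially reduces to applying off-the-shelf results in the cited literature.
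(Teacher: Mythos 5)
This statement is not proved in the paper at all: it is collected in Appendix~\ref{sec_app_prop_min} purely as a citation to \cite{BRT_reg_free_boundary_up_boundary} and \cite{BJ_min_open}, so there is no internal proof to compare your attempt against. What you have written is a plausible road-map for how those references establish the result, and at the level of a sketch it is broadly correct: (i)~is the standard connectedness-by-competitor argument, (ii)~follows from non-degeneracy and density estimates, (iii)--(iv) from blow-up plus improvement of flatness and hodograph/Schauder bootstrap, and (v) from Weiss monotonicity and Federer dimension reduction.

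The one place where you are honest about a gap but do not actually close it is item (iv), and it is genuinely the delicate step. You say one must ``verify that the one-phase structure is compatible with the fixed $C^{2,\alpha}$ boundary of $\bx$ through a Hopf-type argument showing that the free boundary cannot be tangent to $\partial\bx$,'' but that is not quite the mechanism. The reason every contact point $x_0\in\partial\shp_\eps\cap\partial\bx$ is automatically regular is not a tangency obstruction; it is a classification fact about boundary blow-ups. After flattening $\partial\bx$ near $x_0$, any blow-up limit of $\phi_\eps$ is a one-homogeneous global minimizer of the one-phase functional in a half-space, vanishing on the flat boundary, and the only such cone is the half-plane solution $\sqrt{\Lambda_\eps}(x\cdot\nu)^+$ (there are no singular boundary cones in any dimension, in contrast with the interior situation). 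It is this rigidity, together with the boundary Weiss monotonicity formula, that makes the boundary improvement of flatness of \cite{CLS_boundary_improv_flatness} applicable at \emph{every} contact point and gives $\partial\shp_\eps\cap\partial\bx\subset\mathrm{Reg}(\partial\shp_\eps)$. Without articulating this classification step, your proposal for (iv) reduces to an unverified assertion. Everything else in your outline is indeed ``off-the-shelf'' in the sense you describe.
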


We conclude this overview with a simple remark that is needed in Section \ref{sec_non_deg}.

\begin{remark} \label{remark_inner_ball_reg}
Let  $x_0 \in \partial \shp_\e \cap \bx$ and suppose that the interior ball condition holds at $x_0$, that is, there exists $y_0 \in \shp_\e$ and $R_0>0$ such that 
$B_{R_0}(y_0) \subset \shp_\e$ and $ x_0 \in  \overline{B_{R_0}(y_0)}$. Then, thanks to  \cite[Lemma 2.1, Remarks 2.2-2.4]{FV_dis_weights} and \cite[Remark 5.28, Lemma 5.31]{BRT_reg_free_boundary_up_boundary}, $x_0$ is a regular point.
\end{remark}

\bigskip

\textbf{Acknowledgments.} Work partially supported by: PRIN-20227HX33Z ``Pattern formation in nonlinear 
phenomena'' - funded by the European Union-Next Generation EU, Miss. 4-Comp. 1-CUP D53D23005690006,  
and by the MUR-PRIN project no. 2022R537CS ``NO$^3$'' granted by the European Union -- Next Generation EU.
The authors are members of the INdAM-GNAMPA group.

\bigskip

\textbf{Data Availability.} Data sharing not applicable to this article as no datasets were generated or analyzed during the current study.

\bigskip

\textbf{Disclosure statement.} The authors report there are no competing interests to declare.

\bibliography{Minimal_eigen_Laplacian_bibliografy}	

\bibliographystyle{abbrv}

\medskip
\small
\begin{flushright}
\noindent 
\verb"benedetta.noris@polimi.it"\\
\verb"gianmaria.verzini@polimi.it"\\
Dipartimento di Matematica, Politecnico di Milano\\ 
piazza Leonardo da Vinci 32, 20133 Milano (Italy)
\bigskip

\verb"giovanni.siclari@sns.it"\\
Centro di Ricerca Matematica Ennio De Giorgi\\
Classe di Scienze, Scuola Normale Superiore\\
piazza dei Cavalieri 7, 56126 Pisa (Italy)\\
\end{flushright}

\end{document}